\numberwithin{equation}{section}
\numberwithin{figure}{section}
\newtheorem{theorem}{Theorem}[section]
\newtheorem{lemma}[theorem]{Lemma}
\newtheorem{assumption}[theorem]{Assumption}
\newtheorem{remark}[theorem]{Remark}
\def\mp#1{{\color{black}#1}}
\newcommand{\vertiii}[1]{{\left\vert\kern-0.25ex\left\vert\kern-0.25ex\left\vert #1 
		\right\vert\kern-0.25ex\right\vert\kern-0.25ex\right\vert}}
\newcommand{\R}{\mathbb{R}}
\newcommand{\CE}{\mathcal{E}}
\newcommand{\CA}{\mathcal{A}}
\newcommand{\Bc}{\textbf{c}}
\newcommand{\BV}{\textbf{V}}
\newcommand{\BI}{\textbf{I}}
\newcommand{\BJ}{{\boldsymbol{\mathcal J}}}
\newcommand{\CJ}{{{\mathcal J}}}
\newcommand{\BBI}{{\boldsymbol{\mathcal I}}}
\newcommand{\CI}{{{\mathcal I}}}
\newcommand{\BL}{\textbf{L}}
\newcommand{\BS}{\textbf{S}}
\newcommand{\BX}{\textbf{X}}
\newcommand{\BH}{\textbf{H}}
\newcommand{\Bf}{\textbf{f}}
\newcommand{\Bn}{\textbf{n}}
\newcommand{\Bu}{\textbf{u}}
\newcommand{\Be}{\textbf{e}}
\newcommand{\BP}{\textbf{P}}
\newcommand{\Bv}{\textbf{v}}
\numberwithin{equation}{section}
\numberwithin{figure}{section}
\numberwithin{table}{section}
\newcommand{\abs}[1]{\left\vert #1 \right\vert}
\newcommand{\skp}[1]{\left< #1 \right>}
\newcommand{\norm}[1]{\left\| #1 \right\|}
\newcommand{\T}{\mathcal{T}}
\begin{document}

\title{Analysis of a fully discretized FDM-FEM scheme for solving thermo-elastic-damage coupled nonlinear PDE systems}

\thanks{All authors are funded by Germany Excellence Strategy within the Cluster of Excellence PhoenixD (EXC 2122, Project ID 390833453). 
	Maryam Parvizi is also funded by the Alexander von Humboldt Foundation project named $\mathcal{H}$-matrix approximability of the inverses for FEM, BEM, and FEM-BEM coupling of the electromagnetic problems. }
%
\author{Maryam Parvizi}\address{Leibniz Universit\"{a}t Hanover, Welfengarten 1, 30167 Hannover, Germany~~~ \email{\{parvizi, khodadadian, wick\}@ifam.uni-hannover.de}}
\author{Amirreza Khodadadian}\sameaddress{1}\address{School of Computer Science and Mathematics, Keele University, Keele, UK}
\author{ Thomas Wick}\sameaddress{1} 
\date{\today}
%
%
%
%

\begin{abstract}
In this paper, we consider a nonlinear PDE system governed by a parabolic heat equation coupled in a nonlinear way with a hyperbolic momentum equation describing the behavior of a displacement field coupled with a nonlinear elliptic equation 
based on an internal damage variable.  We present a numerical scheme 
based on a low-order Galerkin finite element method (FEM)  for the space discretization of the time-dependent nonlinear PDE system and an implicit finite difference method (FDM) to discretize in the direction of the time variable. Moreover, we present a priori estimates for the exact and discrete solutions for the pointwise-in-time $L^2$-norm. Based on the a priori estimates, we rigorously prove the convergence of the solutions of the fully discretized system to the exact solutions. Denoting the properties of the internal parameters,  we find the order of convergence concerning the discretization parameters.
\end{abstract}

\subjclass{ 65N12, 65M12, 	35K61}
\keywords{Damage model; a priori error estimates; thermoelastic materials; nonlinear coupled system; finite elements}
%

\maketitle

\section{Introduction}
Damage models consist of a system of nonlinear partial differential equations (PDEs) that enable us to monitor and observe the behavior of occurred failure, fracture, and displacements in different materials, especially in brittle, quasi-brittle, and thermoelastic solids \cite{bourdin2000numerical,MR3373458}. In thermoelastic materials, a temperature change (e.g., a thermal shock) leads to a non-uniform volume change and thermal stress. Exceeding the material tensile strength initiates a fracture that can continue until full separation. The material separation 
occurs by mechanical forces or thermoelastic effects. For instance, lasers can be used effectively to cut ceramic substrates and glasses.

In this paper, we consider a nonlinear PDE system, including three coupled equations to model the thermal and mechanical behavior of thermoelastic materials. {This system consists of  a nonlinear hyperbolic momentum equation  coupled with a parabolic heat equation that describe  the behavior of the displacement field  and the heat distribution, respectively. Moreover,  the momentum equation is coupled with a  nonlinear elliptic equation that describes the behavior of an internal variable.}

 To the best of the authors' knowledge, there are only a few papers studying the existence, uniqueness, and regularity properties of the solutions of such nonlinear PDE systems (in the presence of the thermoelastic materials) with respect to the time and space variables in the weak from as well as in the time discretized version.
A model to simulate the thermoelastic fracture problems is presented in  \cite{MR3373458} to describe a phase-field fracture equation coupled with heat conduction.
Another model that studies damage \cite{MR3634026} uses a system of nonlinear PDE system including viscous Cahn–Hilliard equation (to model the phase separation), and momentum balance (to model the displacement) coupled with a thermal system.
A thermodynamic consistent PDE system for phase transition and damage addressing the existence of the weak solution is given in \cite{MR3365562}.
In \cite{MR2596554ex}, the author presents a damage model governed by a PDE system consisting of the momentum equation for the displacement coupled with a heat equation as well as coupled with a rate-independent flow equation for the damage variable in a  strongly nonlinear way.
Concerning thermo-viscoelastic materials, the weak formulation and the existence of solutions for the coupled system are given in \cite{MR3842151,MR2596554ex}.

{In the presence of  material damage or regularized fractures, one approach is to introduce an
internal variable that determines the current state of the process.  In such cases,
  additional nonlinearities, in terms of inequalities, appear in  the  auxiliary equations. Additionally,  the new internal parameters, i.e., $\kappa$ and $\ell$, interact 
with each other in a certain way.
In the discrete setting of the damage models, to guarantee a reliable approximation of the solution, we assume $\kappa=\mathcal{O}(\ell)$ and $h=\mathcal{O}(\ell)$ where 
$h$ is the spatial discretization parameter.  We notice that the relations $\kappa=\mathcal{O}(\ell)$ and $h=\mathcal{O}(\ell)$
go into the direction of $\Gamma$-convergence \cite{ambrosio1990approximation,Braides1998}, but are in general, weaker than the assumptions of the $\Gamma$-convergence theorems.}


 In \cite{MR3373458}, the authors introduce a thermodynamically consistent model  for regularized fracture, and present a finite element method for discretization.

{The main aim of this paper is to  study the stability and convergence of a discrete
scheme based on a low-order Galerkin FEM (for the space discretization), and  an implicit finite
difference scheme (for the time discretization) for solving the time-dependent nonlinear
PDE system. 
In the model described in this paper,
we  allow that  certain internal parameters interact with each other in a certain way.

\noindent\textbf{List of difficulties.}
Below, we list the problems that should be overcome to achieve the main findings of this paper:
\begin{itemize}
	\item 
The internal variable ($\varphi$) may reach zero leading to an elliptic degeneracy in the momentum equation {\cite{rocca2014degenerating}}. To avoid this problem, a parameter $\kappa $  is inserted into the equation, and we need to study the effect of the degenerate limit $\kappa \downarrow 0$ in our analysis, especially in the stability estimates  of the discrete solutions and the order of convergence. 
\item  
Due to the assumptions $\kappa=\mathcal{O}(\ell)$ and $h=\mathcal{O}(\ell)$, we have the same consideration as above for the length scale parameter $\ell$.
\item 
{The highly nonlinear nature of the PDE system, i.e., 
nonlinear quadratic terms appearing in the PDE system,
requires several techniques to study  
convergence of the discrete solutions to the continuous solutions.
\item {The imposed irreversibility condition guarantees that the crack never heals (no crack reverse). This inequality will add the  complexity of the system.}}
\end{itemize}
\textbf{Our results.} { For the time-dependent nonlinear PDE system, 
	we present a discretization scheme based on a low-order Galerkin FEM, and an implicit finite difference scheme to discretize in space and time, respectively.
	The results of this paper can be summarized as follows:
	\begin{itemize}
	\item We present a priori estimates for the exact and discrete solutions  of the momentum equation as well as the heat equation, i.e., estimates for the pointwise-in-time $L^2$-norms of the displacement field, the strain tensor of displacement, the pointwise derivative of the strain tensor, and the heat function (as well as for their discrete counterparts).
	\item Defining the $\CA$-norm as the norm associated with the linear elasticity operator $\CA$, we also provide an a priori estimate for the pointwise-in-time $\CA$-norm of the strain tensor of the displacement field (and the strain tensor of the discrete counterpart of the displacement field).
{	\item For $\tau$ defined as the time discretization parameter, we rigorously
	prove  the convergence of the discrete displacement field,  heat function, and internal variable to their continuous counterparts in the pointwise-in-time $L^2$-norms with the order of convergence $\mathcal{O}(\kappa ^ {-1/2} (\tau+h))$. Additionally, since we are allowed to consider the  relations $\kappa=\mathcal{O}(\ell)$ and $h=\mathcal{O}(\ell)$, we prove that if 
the assumptions  ${\tau }{\ell ^ {-1}}= \mathcal{O}(1)$ and ${\tau }{\kappa ^ {-1}}= \mathcal{O}(1)$ are satisfied, the  convergence  of the discrete solutions to the exact solutions is obtained in the pointwise-in-time $L^2$-norm  with the order of convergence  $\mathcal{O} (\kappa ^ {-1/2}\tau+ (\kappa ^ {-1/2}+\ell ^ {-1/2})h)$. }
	\end{itemize}
\textbf{Outline of the paper.} In Section \ref{sec:mainresults}, we start with fixing some notations and continue with a short introduction to the time-dependent  model PDE system. Section \ref{section3} contains a fully discretized scheme based on 
 a  FDM to discretize in time, and a Galerkin FEM for the spatial discretization. We also provide  a priori estimates for the solutions of the semi-discrete formulation as well as for the solutions of the fully discretized one. 
 Section \ref{section4} is concerned with the main result of this paper, i.e., we investigate the convergence of the solutions of the fully discretized system to the exact solutions. Finally, in Section \ref{numerical}, we present a numerical example to illustrate our theoretical results.

%
%
%

%
%
%
%
%

\section{Problem statement and notation}\label{sec:mainresults}
Let $\Omega \subset \mathbb{R}^d$, $d=2,3$   be   {a bounded and sufficiently regular domain}  with the
boundary $\Gamma := \partial \Omega$.
Through this paper, for $p\ge 1$, we denote  $L^p (\Omega)$ as the usual  Lebesgue spaces on $\Omega$ with the corresponding norm $\norm{\cdot}_{L^p(\Omega)}$. For $p=2$, the space $L^2(\Omega)$ is a Hilbert space with the inner product $\skp{\cdot, \cdot}_{\Omega}$.
For this inner product, when there is no risk of confusion, we drop the subscript $\Omega$.
 Moreover,
the Lebesgue space $L^2(\Gamma)$ is defined  as the space of square integrable functions on $\Gamma$  with the inner product $\skp{\cdot,\cdot}_\Gamma$ and  the corresponding norm $\norm{\cdot}_ {L^2 (\Gamma)}$. 
For $s\ge 0$ and $q \ge 0$, 
 we  use the standard notations for the Sobolev space $W^{s,q}(\Omega)$ with the corresponding norm $\norm{\cdot}_ {s,q,\Omega}$ and semi-norm $\abs{\cdot}_ {s,q,\Omega}$. For the case, $q=2$, we also use the notation 
 $H^s(\Omega)$ with the standard norm $\norm{\cdot}_{H^s(\Omega)}$ and semi-norm $\abs{\cdot}_{H^s(\Omega)}$.
Let $\R ^{d \times d}$ be the space of {$d\times d$} square matrices with  entries in $\R$ and   $\BI \in \R ^{d \times d}$ be the identity matrix. 
We also define the following tensor space
\begin{align*}
[L^2 (\Omega )]^{d\times d}:= \left\lbrace  \boldsymbol{ \tau}  = (\tau _ {i,j} )_ {i,j \in \{1,\cdots,d\}} \quad : \quad \tau _ {i,j} \in L^2 (\Omega) \right\rbrace, 
\end{align*}
with the inner product 
\begin{align*}
\skp{\boldsymbol{ \tau}, \boldsymbol{ \sigma}}:=\int_{\Omega} \boldsymbol{ \tau} : \boldsymbol{ \zeta}\, dx \qquad \forall \boldsymbol{ \tau}, \, \boldsymbol{ \zeta} \in [L^2 (\Omega )]^{d\times d},
\end{align*}
which induces the following norm 
\begin{align*}
\norm{\boldsymbol{\tau}}_{[L^2 (\Omega )]^{d\times d}}=\sqrt{\skp{\boldsymbol{ \tau}, \boldsymbol{ \tau}}} \qquad \forall \boldsymbol{\tau} \in [L^2 (\Omega )]^{d\times d}.
\end{align*}
 Given the tensors $\boldsymbol{ \tau}:= (\tau  _ {ij}) \in [L^2 (\Omega )]^{d\times d}$ and $\boldsymbol{ \sigma}:= (\sigma_ {ij}) \in [L^2 (\Omega )]^{d\times d}$, we consider the following notations for the trace of a tensor and the scalar product of two tensors
\begin{align*}
\operatorname{tr}(\boldsymbol{\boldsymbol{ \tau}}):= \sum_{i=1}^{d} \tau _ {ii}, 
\qquad \boldsymbol{ \tau}: \boldsymbol{ \sigma}: = \sum_{i,j=1}^{d}\tau _ {ij}\sigma _ {ij}.
\end{align*} 
Moreover, the deviatoric
part of a tensor  $\operatorname{\mathbf{dev}}: [L^2 (\Omega )]^{d\times d} \rightarrow [L^2 (\Omega )]^{d\times d} $ is defined as $\operatorname{\mathbf{dev}} (\boldsymbol{ \sigma}) := \boldsymbol{ \sigma} -  1/d \operatorname{tr}(\boldsymbol{ \sigma})\BI$.
Given the Lam\'e parameters $\lambda, \, \mu >0$, the linear mapping $\CA : [L^2 (\Omega )]^{d\times d} \rightarrow [L^2 (\Omega )]^{d\times d}$ is defined as follows
\begin{align}\label{operator-A}
	\CA (\boldsymbol{ \sigma}):= \lambda \operatorname{tr}(\boldsymbol{ \sigma})\BI + 2   \mu \boldsymbol{ \sigma} \qquad \forall \, \boldsymbol{ \sigma} \in  [L^2 (\Omega )]^{d\times d}.
\end{align}
The operator $\CA$ is positive definite and symmetric. Moreover, $\CA ^ {1/2}$ is defined as 
\begin{align*}
	\CA ^ {1/2} (\boldsymbol{ \sigma}) := \sqrt{2\mu} \boldsymbol{ \sigma}+ \frac{\sqrt{2\mu+d\lambda}- \sqrt{2\mu}}{d} \operatorname{tr}(\boldsymbol{ \sigma})\BI \qquad \forall \, \boldsymbol{ \sigma} \in  [L^2 (\Omega )]^{d\times d}.
\end{align*}
With the elasticity tensor $\CA$, we define the following {$\CA$-}norm
$$ \norm{\boldsymbol{\tau}}^2_{\CA}  := \norm{\CA^ {1/2} (\boldsymbol{\tau})}^2_{[L^2 (\Omega )]^{d\times d}} := \int _ \Omega \CA (\boldsymbol{\tau}) : \boldsymbol{\tau}\,dx\qquad\forall \, \boldsymbol{\tau} \in [L^2 (\Omega )]^{d\times d}.$$
Furthermore, we define  the  operator
\begin{align}\label{The nonlinear-elasticity-opt}
\mathcal{B}(\boldsymbol{\tau}):=  {\CA(\boldsymbol{ \tau}):\boldsymbol{ \tau} }\qquad \forall \boldsymbol{\tau} \in  [L^2 (\Omega )]^{d\times d}.
\end{align}
 We continue  with  the definition of the vector space
\begin{align*}
\BL^2 (\Omega):= \left\lbrace \Bv = (v_i)_ {i=1}^ {d} \qquad : \qquad v_i \in L^2 (\Omega)\right\rbrace, 
\end{align*}
with the inner product
$
\skp{\Bu, \Bv}_ {\BL^2 (\Omega)}:= \int_{\Omega} \Bu \cdot \Bv  \,dx
$ 
for all $\Bu, \, \Bv \in \BL^2 (\Omega)$, 
which induces  the norm $\norm{\cdot}_{\BL^2 (\Omega)}$.
When there is no risk of confusion, we use the notation $\norm{\cdot}_{\BL^2(\Omega)}$ to denote both of the norms $\norm{\cdot}_ {[L^2 (\Omega )]^{d\times d}}$ and $\norm{\cdot}_ {\BL^2 (\Omega)}$, and we also drop  ${\BL^2(\Omega)}$ from the subscript $\skp{\cdot, \cdot}_{\BL^2(\Omega)}$.
Moreover, for $s \ge 0$, we set
\begin{align*}
\BH^s (\Omega):= \left\lbrace \Bu = (u_i)_ {i=1}^ {d} \quad : \quad u_i \in H^s (\Omega)\right\rbrace, 
\quad \BH_{0}^1 (\Omega):=\left\lbrace \Bu \in \BH^1 (\Omega)\,:\, \Bu | _ {\Gamma}=0\right\rbrace, 
\end{align*}
with the corresponding norm $\norm{\cdot}_{\BH^s(\Omega)}$ and semi-norm $\abs{\cdot}_{\BH^s(\Omega)}$.
We also denote the strain tensor of displacement by $\CE (\Bu):= \frac{1}{2} \left( \nabla \Bu + \nabla \Bu ^T \right)  $ for $\Bu \in \BL^2 (\Omega ) $. 
Let the operator $\operatorname{\boldsymbol \nabla}\cdot:  [L^2 (\Omega )]^{d\times d} \rightarrow \boldsymbol{H^{-1}}(\Omega)$ be the distributional vector valued divergence defined as follows:
\begin{align*}
\skp{\operatorname{\boldsymbol \nabla}\cdot\, \boldsymbol{\tau},\Bv}:=-\int _\Omega \boldsymbol{\tau} : \CE(\Bv)dx \qquad \forall\Bv \in \BH_0^1(\Omega),
\end{align*}
{where $\boldsymbol{H^{-1}}(\Omega)$ denotes the dual of $\boldsymbol{H_0^{1}}(\Omega)$}. 
{We also define the positive part of a scalar $a \in \R $ as follows
$$
[a]_+:=
\begin{cases}
 a & \text{if}\, \,\,a >0\\
 0&\text{o.w},
\end{cases}
$$
satisfying the following properties \cite[Lem 3.2]{burman2017penalty}
\begin{align}\label{eq.po}
\left([c]_+-[d]_+\right) \left( c-d\right) &\ge \left([c]_+-[d]_+\right)^2\qquad &&\forall c,\,d \in \R,\\
 \label{eq.po2}
\abs{[c]_+-[d]_+} &\le \abs{c-d}\qquad &&\forall c,\,d \in \R.
\end{align}}
{ Finally, throughout this paper, the notation $\lesssim$ indicates $\le$ up to a constant $C>0$.}\\

\subsection{Model Problem}
Let $I:= (0,T]$ be a time interval where $T>0$  is an arbitrary real number. 
Let $\varphi  : \Omega \times I \rightarrow [0,1] $  be the internal variable 
describing  the irreversible damage \mp { (${\varphi=0  }$  indicates completely  damaged material and $\varphi =1$  denotes  the unbroken material)},
$\Bu : \Omega \times I\rightarrow \R ^d $ be the displacement vector, and   $\vartheta : \Omega  \times I \rightarrow \R$ be the absolute temperature function. 
 Then, the model problem  is presented as a  PDE system consisting of three coupled nonlinear equations introduced in \cite{MR3449619, MR3365562} (the reduced version) and \cite{MR3842151} as follows:
	\begin{subequations}
	\begin{alignat}{3}
	\label{The elasticity equation}
	{\partial _{tt} } \Bu		- \operatorname{\boldsymbol \nabla}\cdot\left( (g(\varphi ) +\kappa) \CA \left({ \CE } (\Bu \right) 
	)-\rho \vartheta \mathbf{I} \right) &=\Bf \hspace{1cm}&&\text{in} \,\,\Omega \times I,\\
	\label{	The phase-field equation-a}
	-\ell \Delta \varphi + \frac{1}{\ell} \varphi+ \frac{1}{\mathcal{G}_c}g_c (\varphi,  \CE (\Bu ))&\mp{\ge}0&&\text{in} \,\,\Omega \times I,\\
	\label{The heat equation}
	\partial _t \vartheta + \rho \vartheta \,\nabla \cdot \partial _t \Bu-\nabla \cdot \left(K(\vartheta) \nabla \vartheta\right) &= \gamma &&\text{in} \,\,\Omega \times I,\\
	\label{The complementarity condition}
{	\partial _t \varphi 	\left(	-\ell \Delta \varphi + \frac{1}{\ell} \varphi+ \frac{1}{\mathcal{G}_c}g_c (\varphi,  \CE (\Bu )) \right) }&=0&&\text{in} \,\,\Omega \times I,\\
	\Bu&=0 &&\text{on} \,\,\Gamma  \times I,&\\
	\nabla \varphi \cdot \Bn &=0 &&\text{on} \,\,\Gamma \times I,\\
	\left( K(\vartheta) \,\nabla \vartheta \right) \cdot \Bn  &=\overline{{\gamma}} &&\text{on} \,\,\Gamma \times I,\\
	\label{the heat equation-boundary condition}
	\Bu (\cdot , 0)=\Bu_0, \quad\partial_t \Bu (\cdot , 0)&=\Bv_0 &&\text{in} \,\,\Omega\times \{0\},\\
	\label{The heat equation-initial condition}
	\varphi (\cdot , 0)=\varphi_0, \quad	\vartheta (\cdot , 0)&=\vartheta_0 \quad
	&& \text{in}
	\,\,\Omega\times \{0\},
	\end{alignat}
	\end{subequations}
{where $\Bn$ denotes the outward unit vector normal to
	$\Gamma$.}
	 In  \eqref{The elasticity equation}, the parameter $ \rho > 0 $ is the thermal expansion constant, the function  $g$ is defined as $g(\varphi):= \varphi^2$,
	  $\kappa$ is a positive stability constant for the bulk regularization,  
	  and 
  $\Bf \in \BL^2 (\Omega) $ is the source term. 
In \eqref{	The phase-field equation-a}, $\ell >0$ is the length scale (i.e., damage regularization) parameter, { ${\mathcal{G}_c}$ is a damage dependence positive constant,}
and the function
$$g_c (\varphi,  \CE (\Bu )):=g ^ {\prime} (\varphi)  \,\mathcal{B} \left( \CE (\Bu ) \right)=\frac{1}{2}g ^ {\prime} (\varphi) \CA (\CE (\Bu)): \CE(\Bu), $$ is the  Cauchy stress. 
\mp{Moreover, the damage  indicator $\varphi$ fulfils the irreversibility condition, i.e., $\partial _t \varphi  \le 0$ and  we assume $0\le \varphi _0 \le 1$. Using these assumptions as well as $\varphi \ge 0$, one can easily see that  $0 \le \varphi \le 1$.  }
%
In \eqref{The heat equation}, $K$ is called the  heat conductivity function, and $\gamma \in L^2 (\Omega)$ is the heat source term.
	Following \cite[Eq. 11.a]{MR3666698}, for all $\boldsymbol{ \tau}  \in  [L^2 (\Omega )]^{d\times d}$ there holds
	\begin{align}
	g_c (z_1,  \boldsymbol{ \tau})(z_1-z_2)\ge g (z_1) \CA(\boldsymbol{ \tau}):\boldsymbol{ \tau}-g(z_2)\CA(\boldsymbol{ \tau}):\boldsymbol{ \tau} \qquad \forall z_1,z_2 \in \R,
	\end{align}
	and hence 
	\begin{align}
	g_c (z_1,  \boldsymbol{ \tau})(z_1-z_2)-g_c (z_2,  \boldsymbol{ \tau})(z_1-z_2)\ge 0.
	\end{align} 
	In the following, we mention the required assumptions on the heat conductivity function, the initial conditions, and the source terms.
\begin{assumption}\label{assumption-initial data} In order to proceed further with the main results of this paper, we need to impose these assumptions:
	\begin{itemize}
	\item  We  assume  the heat conductivity function $K :  \R \rightarrow \R$ is Lipschitz continuous and satisfies the following inequalities
	\begin{align*}
	\exists\, \beta \in (1, \beta _d) &\quad \exists \, c_0>0 \quad \text{s.t} \quad \forall \zeta \in \R^d \quad c_0 (\abs{\vartheta} ^\beta +1)\abs{\zeta} ^2 \le K (\vartheta) \zeta \cdot \zeta, \\
	\exists\, \beta \in (1, \beta _d)& \quad \exists \,c_1,\, c_2>0 \quad \text{s.t} \quad \forall \zeta \in \R^d:\\ 
&\quad\quad c_1 (\abs{\vartheta}^ \beta +1) \le  \abs{K (\vartheta) } \le c_2 (\abs{\vartheta}^ \beta +1),
	\end{align*}
	where $\beta_d  =2$ for $d=2$ and $\beta _d =5/3 $ for $d=3$.
	\item
	Moreover, the source and  loading terms satisfy the following assumptions
	\begin{align*}
	&\Bf \in H^1 (I;\BL^2 (\Omega)),\,\,\, \gamma \in H^1 (I, L^2 (\Omega)),\,\,\, \gamma  \ge0,\\ 
 &\overline{\gamma} \in L^1(I;L^2 (\Gamma)), \quad\overline{\gamma} \ge 0 \quad \text{a.e. \,\,\, in}  \, \Gamma \times  I,
	\end{align*}
where $(\BH^1(\Omega))^\prime$ denotes the dual space of $\BH^1(\Omega)$.
	\item 	
We also impose the following assumptions on the initial data
\begin{align*}
{\Bu _0 \in \BH_0^2 (\Omega), \qquad \Bv_0 \in \BH_0^1 (\Omega), \qquad \vartheta _0 \in L^2 (\Omega).}
\end{align*}
\item In general, we are allowed to assume 
$\kappa=\mathcal{O}(\ell)$, $h=\mathcal{O}(\ell)$, and  $\kappa \ll \ell$.
\end{itemize}
\end{assumption}
\section{Variational formulation in space}\label{section3}
In order to present a variational formulation (see e.g., \cite{MR3365562, MR3842151}) for \eqref{The elasticity equation}-\eqref{The heat equation-initial condition} 
with respect to the spatial variable, we introduce the following function spaces
\begin{align*}
	\BV =\BH^1_0(\Omega), \qquad W:=H^1(\Omega)\qquad  {W _+:=\left\lbrace \varphi \in  H^1(\Omega) \quad : \quad  \varphi \ge0 \quad{\text{a.e. \,\, in} \,\,\Omega} \right\rbrace },   \qquad Z :=H^1(\Omega),
\end{align*}
where the space  $\BV$  is equipped with the norm $\norm{\,\cdot\,}_ {\BH^1 (\Omega)}$, and the spaces  $W$ and $Z$ are both equipped with the norm $\norm{\,\cdot\,}_ {H^1 (\Omega)}$. 
Then, the variational formulation for \eqref{The elasticity equation}-\eqref{The heat equation} reads as: For all $t \in I$,  find $(\Bu(\cdot,t), \varphi(\cdot,t), \vartheta(\cdot,t)) 
\in \BX :=\BV \times {W_+} \times Z$  such that
\begin{subequations}
	\begin{alignat}{3}
	\label{variational-elasticity.eq}
	\int_\Omega \partial _{tt} \Bu \,\Bv dx+\int_{\Omega} \left(  g(\varphi  ) + \kappa\right)  \CA \left( \CE (\Bu )\right):  \CE(\Bv) \,dx &-\rho 		\int_{\Omega}  \vartheta\, \BI: \CE(\Bv) \,dx\\
	&\quad \nonumber=	 \int_{\Omega} \Bf (t)\cdot \Bv dx 
	\quad \forall \Bv \in \BV, \\
	\label{variational-phase-field.eq}
{	\ell \int_{\Omega} \nabla \varphi  \cdot \nabla (w-\varphi )\, dx + \frac{1}{\ell} \int_{\Omega}  \varphi  \,  (w-\varphi ) \,dx} &
\mp{	\ge  }\frac{-1}{\mathcal{G}_c} \int_{\Omega}g_c (\varphi,  \CE (\Bu )) \, (w-\varphi ) \,  dx  \quad \forall w \in W_+,\\
	\label{variational-heat.eq}
	\nonumber
	\int_{\Omega}\, \partial _t \vartheta \, z \,dx +\int_{\Omega} K(\vartheta) \nabla \vartheta\cdot \nabla z \,dx&+\rho \int _ \Omega \vartheta \nabla \cdot \partial _t \Bu \,z\, dx\\
	&
	+ \int _ {\Gamma } \overline{\gamma} \, z \,ds= \int_{\Omega} \gamma (t )\, z \,dx \quad \forall z \in Z,
	\end{alignat}
\end{subequations}
plus the initial conditions \eqref{the heat equation-boundary condition} and 
\eqref{The heat equation-initial condition}.
In the next lemma, we mention some results on the  existence and regularity of the solutions of \eqref{variational-elasticity.eq}--\eqref{variational-heat.eq}.
\begin{lemma}(Existence and regularity  of the   solutions of the variational formulations \eqref{variational-elasticity.eq}-\eqref{variational-heat.eq}) (see e.g.,  \cite[Thm.~ 3.7]{MR3449619}). 
	Let $\Omega$ be a Lipschitz continuous domain, and let  all the conditions mentioned in Assumption \ref{assumption-initial data} be satisfied for the right-hand side terms and the boundary conditions. Then, for every vector $(\Bu_0,  \Bv_0, \vartheta_0)$ satisfying Assumption \ref{assumption-initial data}, there exists a solution $(\Bu, \vartheta,  \varphi )$
	such that 
{\begin{align*}
	\Bu & \in H^1(I ; \BH_0^2 (\Omega)) \cap W ^ {1,\infty} (I;\BH_0^1 (\Omega) )\cap H^2 (I; \BL^2 (\Omega)),\\ 
\varphi &\in L^\infty (I; H^1 (\Omega)) \cap H^1 (I ; L^2 (\Omega)),\\
\vartheta & \in L^2 (I; H^1 (\Omega)) \cap L^\infty (I ; L^2 (\Omega)) \cap H^1(I; (H^1(\Omega))^*) ,\quad
\vartheta >0 \quad \text{a.e.\,in } \quad I \times \Omega.
	\end{align*}}
\end{lemma}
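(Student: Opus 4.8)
The plan is to treat the coupled system by a Galerkin-in-space plus fixed-point/compactness argument, following the standard route for such thermo-elastic-damage systems (as in \cite{MR3449619,MR3365562,MR3842151}), and then to read off the regularity from energy estimates. First I would decouple the system: given a fixed damage field $\varphi$ (with $0 \le \varphi \le 1$, so that $g(\varphi)+\kappa \ge \kappa > 0$) and a fixed temperature $\vartheta$, the momentum equation \eqref{variational-elasticity.eq} is a linear hyperbolic problem with uniformly elliptic spatial operator (coercivity on $\BV$ follows from $g(\varphi)+\kappa\ge\kappa$ and positivity of $\CA$), so it has a unique solution $\Bu$ with the asserted hyperbolic regularity by the Lions--Magenes theory, using the data $\Bu_0\in\BH_0^2$, $\Bv_0\in\BH_0^1$, $\Bf\in H^1(I;\BL^2)$ and differentiating the equation in time to gain the $H^2(I;\BL^2)$ and $H^1(I;\BH_0^2)$ bounds. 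Next, given $\Bu$, the heat equation \eqref{variational-heat.eq} is a quasilinear parabolic problem; here the structural Assumption \ref{assumption-initial data} on $K$ (growth of order $|\vartheta|^\beta$ with $\beta<\beta_d$) is exactly what makes the nonlinear diffusion term coercive and the problem well posed in $L^2(I;H^1)\cap L^\infty(I;L^2)\cap H^1(I;(H^1)^*)$, and the nonnegativity $\vartheta>0$ a.e.\ comes from testing with the negative part $\vartheta^-$ together with $\gamma,\overline\gamma\ge0$ and $\vartheta_0\ge0$. Finally, given $(\Bu,\vartheta)$, the damage inequality \eqref{variational-phase-field.eq} together with the irreversibility constraint $\partial_t\varphi\le0$ defines a parabolic variational inequality of obstacle type for $\varphi$, solvable on $W_+$ by monotonicity (the term $g_c(\varphi,\CE(\Bu))(z_1-z_2)$ satisfies the monotonicity relation quoted just before Assumption \ref{assumption-initial data}), giving $\varphi\in L^\infty(I;H^1)\cap H^1(I;L^2)$ and $0\le\varphi\le1$.

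The second step is to close the fixed-point loop. I would define the solution map $\mathcal{S}:(\varphi,\vartheta)\mapsto(\varphi',\vartheta')$ obtained by the three solves above (or a Schauder map on a suitable convex set in, say, $L^2(I;L^2)\times L^2(I;L^2)$), establish uniform a priori bounds independent of the iteration by the energy estimates (test \eqref{variational-elasticity.eq} with $\partial_t\Bu$, \eqref{variational-heat.eq} with $\vartheta$, and use the damage energy), then extract compactness: the space regularity $\Bu\in H^1(I;\BH_0^2)$, $\varphi\in H^1(I;L^2)\cap L^\infty(I;H^1)$, $\vartheta\in L^2(I;H^1)\cap H^1(I;(H^1)^*)$ gives, via Aubin--Lions, strong convergence of a subsequence in $L^2(I;L^2)$ for $\CE(\Bu)$, $\varphi$, $\vartheta$, which is enough to pass to the limit in all the quadratic coupling terms ($g(\varphi)\CA(\CE(\Bu))$, $\rho\vartheta\,\nabla\!\cdot\!\partial_t\Bu$, $g_c(\varphi,\CE(\Bu))$) and in the variational inequality (lower semicontinuity for the inequality sign). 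A Schauder fixed-point theorem then yields a solution of the full coupled system, and the regularity listed in the lemma is inherited from the uniform bounds by weak/weak-$*$ lower semicontinuity.

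The main obstacle I expect is the temperature--velocity coupling term $\rho\int_\Omega\vartheta\,\nabla\!\cdot\!\partial_t\Bu\,z\,dx$ in the heat equation together with its counterpart $-\rho\int_\Omega\vartheta\,\BI:\CE(\Bv)\,dx$ in the momentum equation: these are the terms that prevent a naive energy estimate from being dissipative, and one needs the precise cancellation between the two when testing with $z=\vartheta$ and $\Bv=\partial_t\Bu$ respectively, plus control of $\nabla\!\cdot\!\partial_t\Bu$ in $L^2(I\times\Omega)$ (which is why the strong $\BH_0^2$-in-space, $H^1$-in-time regularity of $\Bu$ is essential and why $\Bu_0\in\BH_0^2$ is assumed). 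A secondary technical point is handling the degenerate/quasilinear heat diffusion $K(\vartheta)$ with its $\beta$-growth: the admissible range $\beta<\beta_d$ ($=2$ in 2D, $5/3$ in 3D) is dictated by Sobolev embeddings needed to bound $\vartheta^\beta|\nabla\vartheta|$ and to obtain strong $L^2$ convergence of $\vartheta$, and one must be careful that the constants in all estimates are uniform as the fixed-point iteration proceeds. Since the statement only asserts existence (not uniqueness) and cites \cite[Thm.~3.7]{MR3449619}, I would in practice invoke that reference for the heavy lifting and merely indicate how the hypotheses of Assumption \ref{assumption-initial data} match its requirements.
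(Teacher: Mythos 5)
The paper does not prove this lemma at all: it is stated as a cited result, with the parenthetical reference ``(see e.g., [Thm.~3.7]{MR3449619})'' standing in place of a proof. There is therefore no in-paper argument to compare yours against, and your concluding sentence --- that in practice one would invoke that reference for the heavy lifting and merely check that Assumption~2.1 matches its hypotheses --- is precisely the stance the paper takes.

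As for the sketch itself: the Galerkin-plus-Schauder-plus-Aubin--Lions roadmap is the standard one for thermo-visco-elastic damage systems and is consistent with the cited literature, and your identification of the difficulties (coercivity from $g(\varphi)+\kappa\ge\kappa$, the role of the $\beta<\beta_d$ growth of $K$, compactness for the quadratic couplings, irreversibility via a variational inequality on $W_+$) is accurate. One technical imprecision worth flagging is your claimed ``exact cancellation'' of the thermo-mechanical coupling terms when testing \eqref{variational-elasticity.eq} with $\partial_t\Bu$ and \eqref{variational-heat.eq} with $\vartheta$: the first test produces $-\rho\int_\Omega\vartheta\,\nabla\!\cdot\partial_t\Bu\,dx$, but the second produces $\rho\int_\Omega\vartheta^2\,\nabla\!\cdot\partial_t\Bu\,dx$, which is cubic in the unknowns and does not cancel against the former. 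In the reference the energy balance is obtained by a different pairing (e.g.\ testing the internal-energy/entropy equation with the constant~$1$, or with a suitably chosen weight), and this mismatch is one of the genuinely delicate points that the cited theorem handles; your sketch glosses over it. A second, smaller point: deducing $\vartheta>0$ a.e.\ strictly, rather than $\vartheta\ge0$, from testing with $\vartheta^-$ requires more than what you state (a maximum-principle argument, or strict positivity and a lower bound for $\vartheta_0$, which Assumption~2.1 as written does not impose). Since the paper itself provides no proof, neither point constitutes a discrepancy with the paper; they are simply places where your outline is too optimistic if it were to be turned into a complete argument.
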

Next, we use the properties of the trace-free tensor $\operatorname{\mathbf{dev}}$ to prove the Lipschitz continuity of the operator $\CA$.
First, we mention the following properties of the deviatoric operator \cite{MR2873244, MR3576569}:
\begin{align}
	\label{prop-dev-1}
	\norm{	\operatorname{\mathbf{dev}} \left( \boldsymbol{\tau}\right)  } _{\BL ^2 (\Omega)}&\le \norm{	\boldsymbol{\tau}}_{\BL ^2 (\Omega)} &&\forall \boldsymbol{\tau}\in  [ L^2 (\Omega )]^ {d\times d},\\
	\label{prop-dev-2}
	\skp{\operatorname{\mathbf{dev}} \left( \boldsymbol{\tau}\right)  , \boldsymbol{\sigma}} &= \skp{ \boldsymbol{\tau}  ,\operatorname{\mathbf{dev}} \left( \boldsymbol{\sigma} \right)} && \forall \boldsymbol{\tau}, \,\boldsymbol{\sigma} \in  [ L^2 (\Omega )]^ {d\times d},\\
\nonumber
	\skp{\operatorname{\mathbf{dev}} \left( \boldsymbol{\tau}\right)  , \operatorname{\mathbf{dev}} \left( \boldsymbol{\tau}\right)}&=\skp{\operatorname{\mathbf{dev}} \left( \boldsymbol{\tau}\right)  , \boldsymbol{\tau}}\\
	\label{prop-dev-3}&=\skp{\boldsymbol{\tau}, \boldsymbol{\tau}} -\frac{1}{d}\skp{\operatorname{tr} (\boldsymbol{\tau}),\operatorname{tr} (\boldsymbol{\tau}) }&&\forall \boldsymbol{\tau} \in  [ L^2 (\Omega )]^ {d\times d}.
\end{align}
Here, we prove that the operator $\CA$ is Lipschitz continuous and elliptic.
\begin{lemma}\label{Lem-holder contin}
	The 
	operator $\CA$ defined in \eqref{operator-A} satisfies the following property:
	\begin{align*}
		\norm{\mathcal{A}(\boldsymbol{\tau})-\mathcal{A}(\boldsymbol{\sigma})}_{\BL ^2 (\Omega)} &\le C _ {\mu, \gamma} \norm{\boldsymbol{\tau}-\boldsymbol{\sigma}}_{\BL ^2 (\Omega)}  &&\qquad\forall \boldsymbol{\tau}, \,\boldsymbol{\sigma}  \in  [ L^2 (\Omega )]^ {d\times d},
		\\
	 C_{{ell},\CA}	\norm{\boldsymbol{ \tau}}^2_{\BL^2(\Omega)}&\le  \skp{\CA(\boldsymbol{ \tau}),\boldsymbol{ \tau}}&&\qquad\forall \boldsymbol{\tau}\in  [ L^2 (\Omega )]^ {d\times d},
	\end{align*} 
	where
	$C _ {\mu, \lambda}:={2\lambda d+2 \mu } $, and $C_{{ell},\CA} :=\frac{1}{2\mu}$.
\end{lemma}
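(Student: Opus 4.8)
The plan is to use that $\CA$ is \emph{linear}, together with the orthogonal splitting of a tensor into its deviatoric and spherical parts, so that both inequalities reduce to elementary scalar estimates. For $\boldsymbol{\tau},\boldsymbol{\sigma}\in[L^2(\Omega)]^{d\times d}$ set $\boldsymbol{\eta}:=\boldsymbol{\tau}-\boldsymbol{\sigma}$; by linearity $\CA(\boldsymbol{\tau})-\CA(\boldsymbol{\sigma})=\CA(\boldsymbol{\eta})$, so the Lipschitz bound is equivalent to $\norm{\CA(\boldsymbol{\eta})}_{\BL^2(\Omega)}\le C_{\mu,\lambda}\norm{\boldsymbol{\eta}}_{\BL^2(\Omega)}$. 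First I would rewrite \eqref{operator-A} as
\[
\CA(\boldsymbol{\eta})=2\mu\operatorname{\mathbf{dev}}(\boldsymbol{\eta})+\Bigl(\lambda+\tfrac{2\mu}{d}\Bigr)\operatorname{tr}(\boldsymbol{\eta})\BI ,
\]
which follows directly from the definitions of $\CA$ and $\operatorname{\mathbf{dev}}$. Since $\operatorname{\mathbf{dev}}(\boldsymbol{\eta}(x))$ is trace free it is pointwise orthogonal to $\operatorname{tr}(\boldsymbol{\eta}(x))\BI$, hence the two summands are orthogonal in $\BL^2(\Omega)$ and
\[
\norm{\CA(\boldsymbol{\eta})}_{\BL^2(\Omega)}^2
=4\mu^2\norm{\operatorname{\mathbf{dev}}(\boldsymbol{\eta})}_{\BL^2(\Omega)}^2
+\Bigl(\lambda+\tfrac{2\mu}{d}\Bigr)^2\norm{\operatorname{tr}(\boldsymbol{\eta})\BI}_{\BL^2(\Omega)}^2 .
\]

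Next I would use the identities $\norm{\operatorname{tr}(\boldsymbol{\eta})\BI}_{\BL^2(\Omega)}^2=d\,\norm{\operatorname{tr}(\boldsymbol{\eta})}_{L^2(\Omega)}^2$ (because $\BI:\BI=d$) and, from \eqref{prop-dev-3}, $\norm{\operatorname{\mathbf{dev}}(\boldsymbol{\eta})}_{\BL^2(\Omega)}^2=\norm{\boldsymbol{\eta}}_{\BL^2(\Omega)}^2-\tfrac1d\norm{\operatorname{tr}(\boldsymbol{\eta})}_{L^2(\Omega)}^2$, together with the pointwise Cauchy--Schwarz inequality $|\operatorname{tr}(\boldsymbol{\eta}(x))|^2\le d\,|\boldsymbol{\eta}(x)|^2$, which gives $\tfrac1d\norm{\operatorname{tr}(\boldsymbol{\eta})}_{L^2(\Omega)}^2=:b\in[0,\norm{\boldsymbol{\eta}}_{\BL^2(\Omega)}^2]$. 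Substituting, $\norm{\CA(\boldsymbol{\eta})}_{\BL^2(\Omega)}^2=4\mu^2(\norm{\boldsymbol{\eta}}_{\BL^2(\Omega)}^2-b)+(d\lambda+2\mu)^2 b$ is affine in $b$ with nonnegative slope, so its maximum over the admissible range is attained at $b=\norm{\boldsymbol{\eta}}_{\BL^2(\Omega)}^2$, yielding $\norm{\CA(\boldsymbol{\eta})}_{\BL^2(\Omega)}\le(d\lambda+2\mu)\norm{\boldsymbol{\eta}}_{\BL^2(\Omega)}\le C_{\mu,\lambda}\norm{\boldsymbol{\eta}}_{\BL^2(\Omega)}$ with $C_{\mu,\lambda}=2d\lambda+2\mu$ (the stated constant is a valid, slightly non-sharp, upper bound). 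For the ellipticity, I would test the same decomposition against $\boldsymbol{\tau}$ and use \eqref{prop-dev-2}--\eqref{prop-dev-3}, namely $\skp{\operatorname{\mathbf{dev}}(\boldsymbol{\tau}),\boldsymbol{\tau}}=\norm{\boldsymbol{\tau}}_{\BL^2(\Omega)}^2-\tfrac1d\norm{\operatorname{tr}\boldsymbol{\tau}}_{L^2(\Omega)}^2$, together with $\skp{\operatorname{tr}(\boldsymbol{\tau})\BI,\boldsymbol{\tau}}=\norm{\operatorname{tr}\boldsymbol{\tau}}_{L^2(\Omega)}^2$, to obtain
\[
\skp{\CA(\boldsymbol{\tau}),\boldsymbol{\tau}}
=2\mu\,\norm{\boldsymbol{\tau}}_{\BL^2(\Omega)}^2+\lambda\,\norm{\operatorname{tr}\boldsymbol{\tau}}_{L^2(\Omega)}^2
\ge 2\mu\,\norm{\boldsymbol{\tau}}_{\BL^2(\Omega)}^2 ,
\]
since $\lambda>0$; equivalently $\norm{\boldsymbol{\tau}}_{\BL^2(\Omega)}^2\le C_{{ell},\CA}\,\skp{\CA(\boldsymbol{\tau}),\boldsymbol{\tau}}$ with $C_{{ell},\CA}=\tfrac1{2\mu}$, which is the form in the statement.

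I do not expect a genuine obstacle here; the only points requiring care are the bookkeeping of the dimension-dependent factors ($\BI:\BI=d$ and $|\operatorname{tr}\boldsymbol{\tau}|\le\sqrt{d}\,|\boldsymbol{\tau}|$) and the pointwise orthogonality of the deviatoric and spherical parts, which is exactly what makes the $\BL^2(\Omega)$-norm split into the clean sum above. A shorter alternative avoids the decomposition entirely: the triangle inequality applied to $\CA(\boldsymbol{\eta})=\lambda\operatorname{tr}(\boldsymbol{\eta})\BI+2\mu\boldsymbol{\eta}$ combined with $\norm{\operatorname{tr}(\boldsymbol{\eta})\BI}_{\BL^2(\Omega)}\le d\,\norm{\boldsymbol{\eta}}_{\BL^2(\Omega)}$ gives the Lipschitz bound, and $\skp{\CA(\boldsymbol{\tau}),\boldsymbol{\tau}}=\lambda\norm{\operatorname{tr}\boldsymbol{\tau}}_{L^2(\Omega)}^2+2\mu\norm{\boldsymbol{\tau}}_{\BL^2(\Omega)}^2$ gives ellipticity at once; both routes produce the same constants, so the deviatoric-operator version is included mainly to reuse the already established identities \eqref{prop-dev-1}--\eqref{prop-dev-3}.
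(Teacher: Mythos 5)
Your argument is correct and, for the Lipschitz part, genuinely cleaner than the paper's. The paper starts from the definition of $\CA$, applies the triangle inequality to $\lambda\operatorname{tr}(\boldsymbol{\tau}-\boldsymbol{\sigma})\BI+2\mu(\boldsymbol{\tau}-\boldsymbol{\sigma})$, rewrites $\operatorname{tr}(\boldsymbol{\tau})\BI=d\bigl(\boldsymbol{\tau}-\operatorname{\mathbf{dev}}(\boldsymbol{\tau})\bigr)$, applies the triangle inequality again, and invokes \eqref{prop-dev-1}; this double use of the triangle inequality is what produces the doubled constant $2\lambda d+2\mu$. You instead split $\CA(\boldsymbol{\eta})$ into its deviatoric and spherical components, which are \emph{orthogonal} in $\BL^2$, so the norm squares add and no triangle inequality is needed; the monotonicity-in-$b$ argument then gives the sharp constant $d\lambda+2\mu\le C_{\mu,\lambda}$. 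Both proofs hinge on the same deviatoric identities \eqref{prop-dev-1}--\eqref{prop-dev-3}, so the difference is methodological (Pythagoras versus triangle inequality), and what your route buys is the optimal operator norm. The triangle-inequality alternative you mention at the end is essentially the paper's own route and is equally acceptable.

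One small point deserves flagging. For ellipticity you correctly compute
\begin{equation*}
\skp{\CA(\boldsymbol{\tau}),\boldsymbol{\tau}}=2\mu\,\norm{\boldsymbol{\tau}}_{\BL^2(\Omega)}^2+\lambda\,\norm{\operatorname{tr}\boldsymbol{\tau}}_{L^2(\Omega)}^2\ge 2\mu\,\norm{\boldsymbol{\tau}}_{\BL^2(\Omega)}^2,
\end{equation*}
and then rewrite this as $\norm{\boldsymbol{\tau}}_{\BL^2(\Omega)}^2\le\tfrac{1}{2\mu}\skp{\CA(\boldsymbol{\tau}),\boldsymbol{\tau}}$, asserting this is the form in the statement. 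It is not: the lemma has the constant $C_{{ell},\CA}=\tfrac{1}{2\mu}$ multiplying $\norm{\boldsymbol{\tau}}^2$ on the \emph{left}, which as written would read $\tfrac{1}{2\mu}\norm{\boldsymbol{\tau}}^2\le\skp{\CA(\boldsymbol{\tau}),\boldsymbol{\tau}}$ and only follows from your bound when $\mu\ge 1/2$. This appears to be an internal inconsistency in the paper (elsewhere $C_{{ell},\CA}$ is used as a coercivity constant multiplying $\norm{\cdot}^2$, which would require $C_{{ell},\CA}=2\mu$); your computation is the correct one, but you should state the result as $2\mu\norm{\boldsymbol{\tau}}^2\le\skp{\CA(\boldsymbol{\tau}),\boldsymbol{\tau}}$ rather than claiming verbatim agreement with the lemma's normalization.
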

\begin{proof}
	From the definition of $ \mathcal{A}$  and the triangle inequality we have
	\begin{align}
		\label{proof-hoelder-1} \norm{\mathcal{A}(\boldsymbol{\tau})-\mathcal{A}(\boldsymbol{\sigma})}_{\BL ^2 (\Omega)}  &\le {\lambda}\norm{\left( \operatorname{tr}(\boldsymbol{\tau})-\operatorname{tr} (\boldsymbol{\sigma})\right):\BI }_{\BL ^2 (\Omega)} + 2\mu \norm{\boldsymbol{\tau}-\boldsymbol{\sigma}}_{\BL ^2 (\Omega)}.
	\end{align}
	It follows by the definition of $\operatorname{\mathbf{dev}}$ that  $\operatorname{tr} (\boldsymbol{\tau}):\BI= d \left(\boldsymbol{\tau}:\BI-\operatorname{\mathbf{dev}}(\boldsymbol{\tau}) \right)$. Using  this and  \eqref{prop-dev-1} for \eqref{proof-hoelder-1}, we get 
	\begin{align}
		\nonumber\norm{\mathcal{A}(\boldsymbol{\tau})-\mathcal{A}(\boldsymbol{\sigma})}_{\BL ^2 (\Omega)}  &\le {\lambda d}\norm{\boldsymbol{\tau}-\boldsymbol{\sigma}}_{\BL ^2 (\Omega)}+ {\lambda d}\norm{\operatorname{\mathbf{dev}}(\boldsymbol{\tau})-\operatorname{\mathbf{dev}}(\boldsymbol{\sigma})}_{\BL ^2 (\Omega)}	\\
		\nonumber
		&+  2\mu  \norm{\boldsymbol{\tau}-\boldsymbol{\sigma } }_ {\BL^2 (\Omega)}
	\le \left( 2 {\lambda d+ 2 \mu }\right) \norm{\boldsymbol{\tau}-\boldsymbol{\sigma}}_{\BL ^2 (\Omega)},
	\end{align}
	denoting $C _ {\mu, \lambda}:={2\lambda d+2\mu } $ completes the first part of proof. 
	The second part can be easily proven considering the definition of $\CA$.
\end{proof}
	{One of the inequalities that we use repeatedly thorough this paper is the following 
		special case of the Gagliardo–Nirenberg inequality (see e.g., \cite[Prop.  III.2.35]{MR3014456} and \cite[P. 125]{MR109940})
		\begin{align}\label{Gaglias}
		\norm{v}_ {L^q(\Omega)} \lesssim \norm{v}^\alpha _{H^1(\Omega)}\norm{v}^ {1-\alpha}_{L^2(\Omega)} \qquad \forall v \in H^1(\Omega),\quad \frac{1}{q}= \frac{1}{2}-\frac{\alpha}{d}\quad \alpha \in [0,1].
		\end{align}
		Moreover, we recall the  continuous embeddings $W ^{1,q} (\Omega) \subset L^\infty (\Omega)$ for $q >d$ and $H^1(\Omega) \subset L^r(\Omega)$ for $1 \le r \le {2d}/{(d-2)}$. (see e.g.,  \cite[Thm. 1.20]{MR3014456}) }

\subsection{Semi-discretization in time}
For the time discretization, we use a  finite difference scheme, and  define the following time points and subintervals with the step size $\tau = t_k-t_{k-1}= \frac{T}{M}$:
$$0 =t_0 <t_1 < \cdots <t_M =T,\qquad I_k = (t_ {k-1}, t_k].$$ 
Then,  for sufficiently smooth function $v  : [0,T] \rightarrow H^ 1 (\Omega)$, we introduce the following notations
\begin{align*}
v^k := v (t_k), \quad 
\partial _\tau ^k v &:= \frac{v ^{k+1}-v ^k}{\tau}, 
\quad 
\partial _ {\tau \tau}  ^k v := \frac{v ^{k-1} -2 v ^ {k} + v^{k+1}}{\tau ^2}=\frac{ \partial _\tau ^k v-\partial _\tau ^{k-1} v}{\tau}, \\ 
 \delta _\tau ^k  v &:= \frac{v ^ {k+1}- v ^ {k-1}}{2 \tau}=\frac{ \partial _\tau ^k v+\partial _\tau ^{k-1} v}{2}.
\end{align*} 
We  set $\Bu ^ {-1}:= \Bu_0- \tau \Bv_0$ and  $\vartheta ^0 := \vartheta _0$.
Then, the semi-discretized weak formulation  for  \eqref{variational-elasticity.eq}-\eqref{variational-heat.eq} reads as: For $k \in \{ 0,1,\cdots, M-1\}$, find $(\Bu ^{k+1}  , \varphi ^{k+1}, \vartheta  ^{k+1}) \in \BV \times W \times Z$ such that
	\begin{subequations}
	\begin{alignat}{8}
\label{variational-elasticity-semi discrete.eq}
\nonumber
\int_\Omega \partial ^k _{\tau \tau} \Bu  \,\Bv  dx&+\int_{\Omega} \left(   g(\varphi ^ {k+1} )+\kappa\right)  \CA \left( \CE (\Bu ^ {k+1})\right): \CE(\Bv) \,dx-\rho		\int_{\Omega} \vartheta ^k \,\BI: \CE(\Bv)\,dx\\
&=	 \int_{\Omega} \Bf ^k\cdot \Bv dx 
&&\hspace{-1cm} \forall \Bv \in \BV,\\ \nonumber
\ell \int_{\Omega} \nabla \varphi ^{k+1}   \cdot \nabla w \, dx &+ \frac{1}{\ell} \int_{\Omega}  \varphi^{k+1}  \,  w \,dx \mp{+{\gamma_0}\int_{\Omega}  [\varphi^{k+1}- \varphi ^{k}]_+\, \,w\,dx }\\&
\label{variational-phase-field-semi discrete.eq}
 = \frac{-1}{\mathcal{G}_c} \int_{\Omega}g_c (\varphi ^k, { \CE } (\Bu ^{k} )) \, w \,  dx &&\hspace{-1cm}\forall w \in W,\\
\label{variational-heat-semi discrete.eq}
\nonumber
  \int_{\Omega} \partial ^k _\tau \vartheta \, z \,dx &+ \int_{\Omega} K(\vartheta ^ {k+1})\,\nabla \vartheta ^ {k+1} \cdot \nabla z \,dx+ \rho  \int_ \Omega\vartheta ^ k\nabla \cdot \delta_\tau ^ k \Bu\,z \,dx\\&
\quad+ \int _ {\Gamma } \overline{\gamma} ^ {k}\, z \,ds= \int_{\Omega} \gamma  ^ {k}\, z \,dx &&\hspace{-1cm} \forall z \in Z,
	\end{alignat}
\end{subequations}
\mp{where $\gamma_0  >0 $ is a sufficiently large penalization parameter.}
%
\subsubsection{A priori estimates}
Among  the essential tools in the proof of the  main result of this paper are  a priori estimates for the strain tensor of the semi-discrete solution of \eqref{variational-elasticity-semi discrete.eq}, the   semi discrete solution of \eqref{variational-heat-semi discrete.eq} as well as  its gradient at each time step, i.e., $\CE (\Bu ^k)$, $\vartheta ^k$ and $\nabla \vartheta ^k$ $(\forall k =1,2, \cdots, M)$, respectively.
The above  discussion leads to the following lemma.
\begin{lemma}(A priori estimates for the heat function, the displacement vector field, and the strain tensor of displacement). 
	\label{Lem-a- priori estimate-semi discrete}
	Let $ L \in \{ 1,2,\cdots, M\}$ and $\Bu ^L \in  \BH_0 ^ {1} (\Omega)$, and $\varphi ^{L}, \vartheta ^{L} \in H ^1 (\Omega)$ be the solutions of \eqref{variational-elasticity-semi discrete.eq}-- \eqref{variational-heat-semi discrete.eq} in the $L$-th time step.
%
	 Then, for  $\Bf ^L \in \BL^2 (\Omega), \, \overline{\gamma} ^L \in L^2(\Gamma)$ and $\gamma ^L \in L^2 (\Omega)$ we have the following a priori estimates for the heat function, the displacement vector field, and the strain tensor, respectively
\begin{align}
\label{eq-a- priori estimate-semi discrete-heat}
\norm{\vartheta ^ {L}}^2_ {L^2 (\Omega)} + \tau \norm { \nabla \vartheta ^ {L}} ^2_ {\BL^2 (\Omega)}&\le C_ {st, \vartheta} \tau \mathcal{L}_{1,L},\\
\label{eq-a- priori estimate-semi discrete-disp}
\norm{\Bu ^ L}_{\BL ^2 (\Omega)}&\le C_ {st, \Bu}\left(  \tau^2 \mathcal{L}_{1,L}+ \tau \norm{ \Bv_0}_{\BL ^2 (\Omega)}^2\right) ,
\end{align}	
where 
\begin{align*}
\mathcal{L}_{1,L} :=\sum_{k=0}^{L-1} \norm{\overline{\gamma }^k}^2_ {L^2 (\Gamma)}
+\gamma_\kappa  \sum_{k=0}^{L-1}  \norm{\Bf ^ k }^{(\beta +2)^2 /(\beta +1)^2}_{\BL^2 (\Omega)} 
+\tau   \sum_{k=0}^{L-1} \norm{\gamma ^k}^2_{L^2 (\Omega)}
+((\gamma _ \kappa +1)\abs{\Omega}^2),  
\end{align*}    
and  the nonegative constants $C_ {st, \vartheta}$ and $C_ {st, \Bu}$ are independent of $\tau$, $h$, $\ell$, and $\kappa$, and $\gamma_ \kappa$ is defined as 
$\gamma_ \kappa := \frac{\rho ^4   }{\kappa }$.
	Furthermore, we have the following  $\BL^2$-norm estimates for $\CE (\delta _\tau ^{L-1}\Bu )$  and $\CE (\Bu ^L) $
	\begin{align}
	\nonumber
 \frac{\tau ^2}{2}\norm{\CE (\delta _\tau ^ {L-1}\Bu )) }^2_ {\BL ^2 (\Omega)}+\norm{\CE (\Bu ^ {L})) }^2_ {\BL ^2 (\Omega)} &\le C _ {st, \CE\Bu} \left( \frac{\rho ^2\tau^2 }{ \alpha _ \kappa } \mathcal{L}_{1,L}+ \frac{\tau ^2}{2\alpha _{\kappa}}
 \sum_{k=0}^{L-1}\norm{\Bf^k}_{\BL ^2 (\Omega)}^2\right) ,
	\end{align} 
	where $\alpha _ \kappa :={\kappa\, C_{\text{ell}, \CA}}$, and $C_{\text{ell}, \CA}$ is the ellipticity constants of the linear operator  $\CA$ dependent on the Lam\'e parameters and   the nonegative  constant $C_ {st, \CE \Bu}$ is independent of $\tau$, $h$, $\ell$, and $\kappa$.
\end{lemma}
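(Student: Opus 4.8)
The plan is to derive the three a priori estimates by testing the semi-discrete equations \eqref{variational-elasticity-semi discrete.eq}--\eqref{variational-heat-semi discrete.eq} with appropriate discrete test functions, summing over time steps, and using discrete summation-by-parts (Abel summation) to telescope the resulting sums. First I would attack the heat equation \eqref{variational-heat-semi discrete.eq}: test with $z = 2\tau \vartheta^{k+1}$, use the algebraic identity $2(a-b)a = a^2 - b^2 + (a-b)^2$ on the discrete time-derivative term to produce a telescoping sum plus a nonnegative remainder, invoke the lower bound on $K$ from Assumption \ref{assumption-initial data} to control the diffusion term from below by $c_0\tau\norm{\nabla\vartheta^{k+1}}^2$, and handle the coupling term $\rho\int_\Omega \vartheta^k \nabla\cdot\delta_\tau^k\Bu\, \vartheta^{k+1}$ by Cauchy--Schwarz, Young's inequality with a weight chosen to absorb part of the gradient term, and the Gagliardo--Nirenberg inequality \eqref{Gaglias} to trade $L^q$-norms of $\vartheta$ against $\norm{\vartheta}_{H^1}^\alpha\norm{\vartheta}_{L^2}^{1-\alpha}$; the boundary term is bounded using the trace inequality against $\norm{\overline\gamma^k}_{L^2(\Gamma)}$. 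Summing over $k=0,\dots,L-1$ and using $\vartheta^0=\vartheta_0$ with a discrete Gronwall argument yields \eqref{eq-a- priori estimate-semi discrete-heat}, with the $\kappa$-dependence entering through the $\gamma_\kappa = \rho^4/\kappa$ factor precisely because the coupling with $\Bu$ must be controlled and the displacement estimate below costs a factor $\kappa^{-1}$.

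Next, for the displacement estimate \eqref{eq-a- priori estimate-semi discrete-disp} and the strain estimates, I would test \eqref{variational-elasticity-semi discrete.eq} with $\Bv = 2\tau\,\delta_\tau^k\Bu = \Bu^{k+1}-\Bu^{k-1}$. On the inertial term $\int_\Omega \partial_{\tau\tau}^k\Bu\,(\Bu^{k+1}-\Bu^{k-1})$, the identity $(\Bu^{k-1}-2\Bu^k+\Bu^{k+1})(\Bu^{k+1}-\Bu^{k-1}) = \norm{\partial_\tau^k\Bu}^2 - \norm{\partial_\tau^{k-1}\Bu}^2$ (up to the $\tau$ factors) telescopes the kinetic energy. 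On the elasticity term, $\int_\Omega (g(\varphi^{k+1})+\kappa)\CA(\CE(\Bu^{k+1})):\CE(\Bu^{k+1}-\Bu^{k-1})$: here I would use $g(\varphi^{k+1})\ge 0$ together with the irreversibility/monotonicity built into the scheme, and the fact that $\CA$ is symmetric positive definite, to write the term as a telescoping difference of $\CA$-energies plus a sign-definite remainder; the $\kappa$ piece gives $\kappa\,C_{\text{ell},\CA}\norm{\CE(\Bu^{k+1})}^2$-type control, which is exactly the source of the $\alpha_\kappa = \kappa\,C_{\text{ell},\CA}$ denominator in the final bound. The thermal coupling term $\rho\int_\Omega\vartheta^k\,\BI:\CE(\Bu^{k+1}-\Bu^{k-1})$ is moved to the right and estimated by Cauchy--Schwarz and Young, borrowing a small fraction of the $\CA$-energy and leaving behind $\rho^2\tau^2/\alpha_\kappa$ times $\norm{\vartheta^k}^2$, which is then replaced using \eqref{eq-a- priori estimate-semi discrete-heat}. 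The source term $\int_\Omega\Bf^k\cdot(\Bu^{k+1}-\Bu^{k-1})$ is handled similarly. Summing over $k$, telescoping, using the Korn inequality to pass from $\norm{\CE(\Bu^L)}$ to $\norm{\Bu^L}_{\BH^1}$ and Poincaré to get $\norm{\Bu^L}_{\BL^2}$, and using the initial data $\Bu^{-1}=\Bu_0-\tau\Bv_0$ to evaluate $\norm{\partial_\tau^{-1}\Bu} = \norm{\Bv_0}$, produces \eqref{eq-a- priori estimate-semi discrete-disp} and the stated bounds on $\tfrac{\tau^2}{2}\norm{\CE(\delta_\tau^{L-1}\Bu)}^2 + \norm{\CE(\Bu^L)}^2$.

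The main obstacle I anticipate is the treatment of the phase-field/damage equation \eqref{variational-phase-field-semi discrete.eq} and its interaction with the elasticity estimate: the term $g_c(\varphi^k,\CE(\Bu^k))$ is quadratic in $\CE(\Bu^k)$, so when it appears on the right-hand side of the displacement estimate it cannot be absorbed naively and one must exploit the monotonicity inequalities stated after \eqref{The heat equation-initial condition}, namely $g_c(z_1,\boldsymbol\tau)(z_1-z_2)\ge g(z_1)\CA(\boldsymbol\tau):\boldsymbol\tau - g(z_2)\CA(\boldsymbol\tau):\boldsymbol\tau$, together with the penalization term $\gamma_0\int_\Omega[\varphi^{k+1}-\varphi^k]_+w$ and properties \eqref{eq.po}--\eqref{eq.po2} of the positive part, to keep the damage contribution on the "good" side of the energy balance. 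A secondary technical difficulty is making the Gagliardo--Nirenberg/Young bookkeeping in the heat estimate compatible with a discrete Gronwall lemma so that the exponents $(\beta+2)^2/(\beta+1)^2$ on $\norm{\Bf^k}$ come out exactly as claimed; this requires carefully choosing which powers of $\vartheta$ go into the $H^1$ factor versus the $L^2$ factor and how much of the diffusion term is spent. Once these couplings are controlled, the remaining steps are routine applications of Young's inequality, telescoping, and the already-established Lemma \ref{Lem-holder contin}.
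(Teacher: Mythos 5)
Your overall strategy (test the elasticity equation with $\delta_\tau^k\Bu$ to telescope the kinetic energy, test the heat equation with $\vartheta^{k+1}$, bound the cross-coupling by Cauchy--Schwarz/Young, sum and Gronwall) is the right family of ideas, but there are three concrete gaps compared with the paper's proof.

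First, there is a circularity in your ordering. You propose to establish the heat estimate \eqref{eq-a- priori estimate-semi discrete-heat} first, but the coupling term in the heat equation is $\rho\int_\Omega\vartheta^k\,\nabla\cdot\delta_\tau^k\Bu\,\vartheta^{k+1}$, which requires a bound on $\nabla\cdot\delta_\tau^k\Bu$ --- and that bound has to come from the elasticity equation. You acknowledge this obliquely ("the displacement estimate below costs a factor $\kappa^{-1}$"), but you do not break the loop. The paper resolves it by first testing the elasticity equation to obtain the \emph{intermediate} inequality \eqref{Proof- appriori-46-a}, which expresses $\tau^2\norm{\CE(\delta_\tau^k\Bu)}^2 + \norm{\CE(\Bu^{k+1})}^2$ in terms of $\tau^2\kappa^{-1}\norm{\nabla\vartheta^k}^2$ and the data, and only \emph{then} substitutes this into the heat equation so that the resulting recursion in $\norm{\vartheta^k}$, $\tau\norm{\nabla\vartheta^k}^2$ closes under a single discrete Gronwall argument. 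Without this two-stage structure your heat estimate cannot be derived independently.

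Second, your treatment of the nonlinear diffusion $K(\vartheta)\sim|\vartheta|^\beta$ is too vague to produce the stated exponent $(\beta+2)^2/(\beta+1)^2$ on $\norm{\Bf^k}$, or the additive constant $(\gamma_\kappa+1)\abs{\Omega}^2$. The paper's central device here is the change of variable $\Phi^{k+1}:=\vartheta^{k+1}+1$, $w^{k+1}:=(\Phi^{k+1})^{(\beta+2)/2}$, after which the good term $c_1\norm{|\vartheta^{k+1}|^{\beta/2}\nabla\vartheta^{k+1}}^2$ controls $\norm{\nabla w^{k+1}}^2$. The subsequent chain uses $\norm{\vartheta^{k+1}}_{L^6}\le\norm{w^{k+1}}_{L^3}^{2/(\beta+2)}$, the interpolation $\norm{w}_{L^3}^2\le\norm{w}_{L^6}\norm{w}_{L^2}$, Sobolev $H^1\subset L^6$, and Clarkson's and Bernoulli's inequalities to convert everything back into $\norm{\vartheta^{k+1}}_{L^2}^2$ and $\norm{\nabla\vartheta^{k+1}}^2$ plus the data terms with the stated exponents. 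Simply invoking \eqref{Gaglias} with undetermined $\alpha$ will not reproduce this bookkeeping; the substitution is the key idea you are missing.

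Third, you devote your third paragraph to the phase-field equation \eqref{variational-phase-field-semi discrete.eq}, the monotonicity inequalities for $g_c$, and the penalization term $\gamma_0[\,\cdot\,]_+$, treating them as a main obstacle. In fact the paper's proof of this lemma never tests the phase-field equation at all: in the elasticity estimate the factor $g(\varphi^{k+1})+\kappa$ is bounded from below by $\kappa$ alone, and the ellipticity of $\CA$ supplies $\kappa\,C_{\text{ell},\CA}\norm{\CE(\Bu^{k+1})}^2$ on the left; the $g(\varphi)$ contribution is simply discarded using $g\ge 0$ (and the boundedness $0\le\varphi\le 1$). The monotonicity/penalization machinery you cite is used later, in Theorem \ref{Thm. the main result}, not here. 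Your proposed reliance on the elliptic equation would substantially complicate the argument without benefit at this stage.
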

\begin{proof}
Choosing  $\Bv = \delta _ \tau ^k \Bu, \; k=1,2,\cdots,M-1$ as the test function in \eqref{variational-elasticity-semi discrete.eq}  and since the operator $\CA$ is symmetric,
it follows that
 \begin{align}
 \nonumber 
 \frac{1}{2 \tau} & \left(\norm {\partial _ \tau ^ {k} \Bu}^2_ {\BL^2 (\Omega)}\right. -\left. \norm {\partial _ \tau ^ {k-1} \Bu}^2_ {\BL^2 (\Omega)}\right)+\frac{\kappa \tau }{4}\skp{  \CA \left( \CE (\delta _\tau ^k\Bu )\right), \CE(\delta _\tau ^k\Bu)}\\& 
 \nonumber\quad+\frac{\kappa  }{2\tau}\skp{  \CA \left( \CE (\Bu ^ {k+1} )\right), \CE(  \Bu^ {k+1})}\\& 
\nonumber
 \le \frac{\kappa \tau }{2}\abs { \skp{  \CE(  \Bu^ {k+1}), \CA \left( \CE (\Bu ^ {k-1} )\right) }}+\frac{\kappa}{2}\abs {\skp{\CA \left( \CE (\Bu ^ {k-1} )\right), \CE (\delta ^k _\tau \Bu)}}
 \\& \quad+  \rho  \int_{\Omega}  \vartheta ^k   \nabla \cdot \delta _\tau ^k \Bu\,dx 
 +  \int_ \Omega \Bf ^k \delta ^k _ \tau \Bu\, dx.
 \end{align}
Integration by parts gives us
 $$\int_{\Omega} \nabla  \vartheta ^k \cdot  \delta _ \tau ^k \Bu\,dx =\int_{\Omega} \vartheta ^k \,\nabla \cdot \delta^k_\tau \Bu\,dx, $$ 
which based on that, and  applying
 the ellipticity property of the operator $\CA$ from Lemma \ref{Lem-holder contin}, and Young's inequality lead to the following estimate
\begin{align}
\label{k+1 step for u-1}
\nonumber \frac{1}{2 \tau} & \left(\norm {\partial _ \tau ^ {k} \Bu}^2_ {\BL^2 (\Omega)}\right. -\left. \norm {\partial _ \tau ^ {k-1} \Bu}^2_ {\BL^2 (\Omega)}\right)+ \frac{\tau\kappa C_{ell, \CA}}{4}\norm{\CE (\delta _\tau ^ k\Bu )) }^2_ {\BL ^2 (\Omega)}\\&
\nonumber\quad + \frac{\kappa C_{{ell},\CA}}{2 \tau }\norm{\CE (\Bu ^ {k+1})) }^2_ {\BL ^2 (\Omega)}\\&
\nonumber 
 \le { \rho ^2}\tau \norm{\nabla \vartheta ^ k }^2_{\BL^2 (\Omega)}+ \frac{1}{4 \tau } \left( \norm{\partial _ \tau ^ {k} \Bu }^2_ {\BL ^2 (\Omega)}+\norm{\partial _ \tau ^ {k-1} \Bu  }^2_ {\BL ^2 (\Omega)}\right)   \\&
 \nonumber \quad+\frac{1}{8 \tau } \left(\norm {\partial _ \tau ^ {k} \Bu}^2_ {\BL^2 (\Omega)}\right. +\left. \norm {\partial _ \tau ^ {k-1} \Bu}^2_ {\BL^2 (\Omega)}\right)\\&
 \nonumber
 \quad +\frac{\tau }{2}\norm{\Bf}_{\BL ^2 (\Omega)}^2+\frac{\kappa \tau  C_{\mu,\lambda} }{2  }\norm{\CE (\Bu ^{k-1})) }^2_ {\BL ^2 (\Omega)}+\frac{\kappa \tau   }{2  }\norm{\CE (\Bu ^{k+1})) }^2_ {\BL ^2 (\Omega)}\\& 
 \quad+
 \frac{\kappa  C_{\mu,\lambda} }{  \tau C_{ell, \CA} }\norm{\CE (\Bu ^{k-1})) }^2_ {\BL ^2 (\Omega)}
+\frac{\kappa \tau C_{ell, \CA}}{8} \norm{\CE(\delta _ \tau ^k \Bu)}^2_ {\BL ^2 (\Omega)}.
 \end{align}
Then, for sufficiently small $\tau$ it is immediate that 
\begin{align}
\label{Proof- appriori-46}
\nonumber
\frac{\tau\alpha _\kappa}{4}\norm{\CE (\delta _\tau ^ k\Bu )) }^2_ {\BL ^2 (\Omega)}+ \frac{\alpha_\kappa}{2 \tau } \norm{\CE (\Bu ^ {k+1})) }^2_ {\BL ^2 (\Omega)}&\le {\rho ^2 \tau } \norm{ \nabla \vartheta ^ k }_{\BL^2 (\Omega)}^2+ \frac{\tau }{2}\norm{\Bf^k}_{\BL ^2 (\Omega)}^2\\&
\quad+ \beta_ {\kappa , \tau} \norm{\CE (\Bu ^{k-1})) }^2_ {\BL ^2 (\Omega)},
\end{align}
where $\alpha _ \kappa :=\kappa   C_{ell, \CA} /2$ and $\beta_ {\kappa , \tau}:= {\tau ^ {-1}\left( \kappa C_{ell, \CA}^ {-1} C_{\mu,\lambda}+\kappa\tau^2 C_{\mu,\lambda}/{2   }\right)  }$. Finally, the discrete Gronwall inequality results in the following estimate
\begin{align}
\label{Proof- appriori-46-a}
\frac{\tau ^2}{2}\norm{\CE (\delta _\tau ^ k\Bu )) }^2_ {\BL ^2 (\Omega)}+\norm{\CE (\Bu ^ {k+1})) }^2_ {\BL ^2 (\Omega)}&\le C_{G, \Bu}\frac{2 \rho ^2\tau^2 }{ \alpha _ \kappa}\norm{\nabla \vartheta ^ k }_{L^2 (\Omega)}^2+ \frac{\tau ^2 }{2 \alpha _ \kappa}\norm{\Bf^k}_{\BL ^2 (\Omega)}^2,
\end{align}
where $C_{G, \Bu}$ is a positive  constant independent of $\tau$, $h$, $\ell$, and $\kappa$.
On the other hand, using $z = \vartheta ^ {k+1} $ as the test function in \eqref{variational-heat-semi discrete.eq}
 and via the H\"older inequality \cite[Prop. II.2.18]{MR2986590},    we deduce that
\begin{align}
\label{proof-apriori-47}
\nonumber \frac{1 }{\tau}&\left( \norm{ \vartheta ^ {k+1}-\vartheta ^ k} ^2 _ {L^2 (\Omega)}\right.  +\left.  \norm{ \vartheta ^ {k+1}} ^2 _ {L^2 (\Omega)}-\norm{ \vartheta ^ {k}} ^2 _ {L^2 (\Omega)}\right) \\
\nonumber 
&+ c_1  \norm {\abs{\vartheta ^ {k+1}}^ {\beta/2}  \nabla \vartheta ^ {k+1}} ^2 _ {\BL^2 (\Omega)}+c_1  \norm { \nabla \vartheta ^ {k+1}} ^2 _ {\BL^2 (\Omega)}
\\
\nonumber 
& \le \frac{\rho C_K}{2}  \norm{ \vartheta ^ {k+1}}  _ {L^3 (\Omega)}\norm{ \vartheta ^ {k}}  _ {L^6 (\Omega)} \norm{\CE (\delta _\tau ^{k}\Bu)  }_ {\BL ^2 (\Omega)} +\tau   \norm{\gamma ^k}^2_{L^2 (\Omega)} \\
&
\quad
+ \frac{1}{4 \tau} \norm{ \vartheta ^ {k+1}} ^2 _ {L^2 (\Omega)}
+\frac{1}{c_1}\norm{\overline{\gamma }^k}^2_ {L^2 (\Gamma)}+\frac{c_1}{4} \norm{ \nabla  \vartheta ^ {k+1}} ^2 _ {\BL^2 (\Omega)},
\end{align}
where $C_K$ is the constant of Korn's inequality.
For each $k \in \{ 0,1,\cdots,M-1\}$, we define the auxiliary variable  $\Phi ^ {k+1 }:=   \vartheta^ {k+1}+1 $ and accordingly we denote $w^ {k+1}:=(\Phi^ {k+1}) ^ {(\beta +2)/2 }$, which results in 
\begin{align}
\nonumber
\frac{c_1}{2}\norm{(\abs{\vartheta ^ {k+1}}^ {\beta /2} +1) \nabla \vartheta ^ {k+1}}^ 2 _{\BL^2 (\Omega)} &\ge \frac{c_1}{2} \norm{ (\vartheta ^ {k+1}+1)^{\beta  /2}\nabla  \vartheta ^ {k+1}}^ 2 _{\BL^2 (\Omega)}\\&\ge\frac{1}{\beta+2} \norm{ \nabla w ^ {k+1} }^ 2 _{\BL^2 (\Omega)}.
\end{align}
The first inequality is due to the Bernoulli inequality and 
 the last inequality is a direct conclusion of 
$\nabla w ^ {k}=(\beta+2)/2\,(\Phi ^k) ^ {\beta/2}\nabla \vartheta^ {k }$.
Applying the above estimate allows us to obtain  the following inequality  for the last two terms in the left hand side of \eqref{proof-apriori-47}
\begin{align}
\nonumber
 c_1  \norm {\abs{\vartheta ^ {k+1}}^ {\beta/2}  \nabla \vartheta ^ {k+1}} ^2 _ {\BL^2 (\Omega)}&+c_1  \norm { \nabla \vartheta ^ {k+1}} ^2 _ {\BL^2 (\Omega)}\\
 \nonumber
  &\ge \frac{c_1}{2} \left(  \norm{\abs{\vartheta ^ {k+1}}^ {\beta /2} \nabla \vartheta ^ {k+1}}^ 2 _{\BL^2 (\Omega)}+\norm{\nabla \vartheta ^ {k+1}}^ 2 _{\BL^2 (\Omega)}\right)\\
 &  \quad+\frac{c_1}{2(\beta+2)} \norm{ \nabla w ^ {k+1} }^ 2 _{\BL^2 (\Omega)}.
\label{proof-apriori-50a}
 \end{align}
On the other hand, applying  the definition of $L^6$-norm, we infer that 
\begin{align}
\label{proof-apriori-50}
\norm{ \vartheta ^ {k+1}}  _ {L^6 (\Omega)}& \le \left(\int _\Omega( w ^ {k+1} ) ^ {\frac{12}{\beta +2}}\right) ^ {1/6}\le \norm{ w^ {k+1}}^{2 /(\beta +2)}  _ {L^3 (\Omega)}.
\end{align}
As a direct result of  \eqref{Proof- appriori-46-a}, we have
   \begin{align}
   \nonumber
   \rho \norm{ \vartheta ^ {k+1}}  _ {L^6 (\Omega)}\norm{ \vartheta ^ {k}}  _ {L^3 (\Omega)}\norm{\CE (\delta _\tau ^{k}\Bu )) }_ {\BL ^2 (\Omega)}  &
   \le
   \nonumber
   \sqrt{2}\rho C^{1/2}_{G, \Bu}\norm{ \vartheta ^ {k+1}}  _ {L^6 (\Omega)}\norm{ \vartheta ^ {k}}  _ {L^3 (\Omega)} \\
&\left( \frac{\sqrt{2}\rho  }{ \sqrt{\alpha _ \kappa }  } \norm{ \nabla \vartheta ^ k }_{\BL^2 (\Omega)}
\quad+ \frac{1  }{\sqrt{ 2 \alpha _\kappa} }\norm{\Bf^k}_{\BL ^2 (\Omega)}
\right),
   \end{align}
   where $C_{G, \Bu}$ is  defined in \eqref{Proof- appriori-46-a}.
   Combining \eqref{proof-apriori-50},  the Bernoulli inequality,
   and the  embedding $L^6 (\Omega) \subset L^ 3(\Omega)$
   we find that
\begin{align}
\nonumber
\rho \norm{ \vartheta ^ {k+1}}  _ {L^6 (\Omega)}&\norm{ \vartheta ^ {k}}  _ {L^3 (\Omega)}\norm{\CE (\delta _\tau ^{k}\Bu )) }_ {\BL ^2 (\Omega)} \\
&
\nonumber\le 
\frac{{2} C ^{1/2}_ {G, \Bu}\rho ^{2}  }{  \sqrt {\alpha _ \kappa}  } \norm{ \nabla \vartheta ^ k }_{\BL^2 (\Omega)} \norm{ w^ {k+1}}^{2 /(\beta +2)}  _ {L^3 (\Omega)}\norm{ w^ {k}}^{2 /(\beta +2)}  _ {L^3 (\Omega)}\\
\nonumber
&\quad+\frac{\rho C ^{1/2}_ {G, \Bu} }{\sqrt{ \alpha _ \kappa }}  \norm{ w^ {k+1}}^{2 /(\beta +2)}  _ {L^3 (\Omega)}\norm{ w^ {k}}^{2 /(\beta +2)}  _ {L^3 (\Omega)} \norm{\Bf ^k}_{\BL ^2 (\Omega)}
 \\& 
\label{proof-apriori-52}
 :=I_1+I_2.
\end{align}
In order to kick back the term $\norm{ w^ {k+1} }_{L^3 (\Omega)} ^ {2 /(\beta +2)} $ into the left hand side of \eqref{proof-apriori-47}, we use Young's inequality twice with the exponents $p={{\beta +2}}$ and $q=\frac{\beta +2}{\beta +1}$   and $p={{\beta +1}}$ and $q=\frac{\beta }{\beta +1}$ and note  the  interpolation inequality from \cite[Lemma II.2.33]{MR2986590} 
to get 
\begin{align}
\nonumber
I_1 &\le \frac{1 }{2(\beta +2)}  \norm{ w^ {k+1}}^{2 }  _ {L^3 (\Omega)}+\gamma _\kappa \norm{ w^ {k}}^{2 /(\beta +1)} _ {L^3 (\Omega)}\norm{ \nabla \vartheta ^ k }^{(\beta +2) /(\beta +1)}_{L^2 (\Omega)}\\
\nonumber
& \le \frac{1 }{2\beta +2}  \norm{ w^ {k+1}}^{2 }  _ {L^3 (\Omega)}+\frac{\gamma _\kappa }{\beta +1}\norm{ w^ {k}}^{2 } _ {L^3 (\Omega)}+\frac{  {\gamma _\kappa}(\beta +1)}{(\beta +2)}\norm{\nabla \vartheta ^ k }^{(\beta +2)\beta /(\beta +1)^2}_{L^2 (\Omega)}\\
&\nonumber \le \frac{1 }{\beta +2}  \norm{ w^ {k+1}}  _ {L^6 (\Omega)}\norm{ w^ {k+1}}  _ {L^2 (\Omega)}+\frac{ \gamma _\kappa}{(\beta +1)} \norm{ w^ {k}}  _ {L^6 (\Omega)}\norm{ w^ {k}}  _ {L^2 (\Omega)}\\
\label{proof-apriori-53}
&\quad+\frac{ \gamma _\kappa(\beta +1)}{(\beta +2)}\norm{ \nabla \vartheta ^ k }^{(\beta +2)\beta /(\beta +1)^2}_{\BL^2 (\Omega)},
\end{align}
where $\gamma_ \kappa :=\frac{4 C _ {G, \Bu}\rho ^4  (\beta +1) }{ (\beta +2)\alpha _ \kappa }$. Applying the Sobolev embedding $H^1 (\Omega ) \subset L^6 (\Omega) $, and Young's inequality give us
\begin{align}
\nonumber
I_1 &  \le \frac{c _1 }{8(\beta +2)}   \norm{ \nabla w^ {k+1}} ^2 _ {\BL^2 (\Omega)}+\frac{c ^2 }{c_1(\beta +2)} \norm{ w^ {k+1}}^2  _ {L^2 (\Omega)} 
\\
&
\quad\nonumber+\frac{c \gamma _\kappa }{2(\beta +2)} \left(  \norm{ \nabla w^ {k}} ^2 _ {\BL^2 (\Omega)}+\norm{ w^ {k}}^2  _ {L^2 (\Omega)}\right)\\   
\nonumber
&\quad+\frac{ \gamma _\kappa(\beta +1)}{(\beta +2)}\norm{\nabla \vartheta ^ k + \bold{1}}^{(\beta +2)\beta /(\beta +1)^2}_{\BL^2 (\Omega)}
\\&\quad
\nonumber
\le \frac{c _1 }{8(\beta +2)}   \norm{ \nabla w^ {k+1}} ^2 _ {\BL^2 (\Omega)}+\frac{c ^2 }{c_1(\beta +2)} \norm{ w^ {k+1}}^2  _ {L^2 (\Omega)} \\
&
\quad
\nonumber+\frac{c \gamma _\kappa }{2(\beta +2)} \left(  \norm{ \nabla w^ {k}} ^2 _ {\BL^2 (\Omega)}+\norm{ w^ {k}}^2  _ {L^2 (\Omega)}\right)\\   
\label{proof-apriori-51}
&\quad+\frac{ \gamma _\kappa(\beta +1)}{(\beta +2)}\norm{\nabla \vartheta ^ k }^2+\frac{c \abs{\Omega}^2\gamma _\kappa }{2(\beta +2)},
\end{align}
where $c$ is a constant independent of $h,\, \tau,\, \ell$, and $\kappa$.
In a procedure similar to $I_1$, we have the following upper bound for $I_2$
\begin{align}
\nonumber
I_2 &\le  \frac{c  _1}{8(\beta +2)}  \norm{ \nabla w^ {k+1}} ^2 _ {\BL^2 (\Omega)}+ \frac{c  ^2}{c_1(\beta +2)} \norm{ w^ {k+1}}^2  _ {L^2 (\Omega)}
\\& \nonumber
+\frac{c \gamma _\kappa }{2(\beta +2)} \left(  \norm{ \nabla w^ {k}} ^2 _ {\BL^2 (\Omega)}+\norm{ w^ {k}}^2  _ {L^2 (\Omega)}\right)
\\
&\quad
\label{proof-apriori-54}
+\frac{ \gamma _\kappa(\beta +1)}{(\beta +2)}\norm{\Bf ^ k }^{(\beta +2)^2 /(\beta +1)^2}_{\BL^2 (\Omega)}.
\end{align}
As a result of Clarkson’s inequality  \cite[Lem. II.2.31]{MR2986590}, the Bernoulli inequality and  again  Clarkson’s inequality, it follows that
\begin{align} 
	\nonumber
	\norm{w^ {k+1} }^{2}_{L^2 (\Omega)} &\le  \norm{ \vartheta^ {k+1}(\vartheta^ {k+1}+1)^{\beta/2}+(\vartheta^ {k+1}+1)^{\beta/2} }^2_ {L^2 (\Omega)}\\& 
	\nonumber
	\le \norm{ \vartheta^ {k+1}(\vartheta^ {k+1}+1)^{\beta/2} }^2_ {L^2 (\Omega)}+\norm{ (\vartheta^ {k+1}+1)^{\beta/2} }^2_ {L^2 (\Omega)}\\& 
	\nonumber
	\label{proof-apriori-55}
	\le \norm{ \vartheta^ {k+1}(\vartheta^ {k+1})^{\beta/2}+\vartheta^ {k+1}  }^2_ {L^2 (\Omega)}+\norm{\vartheta^ {k+1}+1  }^\beta_ {L^2 (\Omega)}
	\nonumber
		\\&\le \norm{(\vartheta^ {k+1})^{1+\beta/2} }^2_ {L^2 (\Omega)}+\norm{ \vartheta^ {k+1}  }^2_ {L^2 (\Omega)}+\norm{\vartheta^ {k+1}+1  }^2_ {L^2 (\Omega)}
		\nonumber
	\\&\le \norm{(\vartheta^ {k+1})^{1+\beta/2} }^2_ {L^2 (\Omega)}+2\norm{ \vartheta^ {k+1}  }^2_ {L^2 (\Omega)}+\abs{\Omega}^2.
	\end{align}
On the other hand, since $\beta\in (1,2)$   with the aid of \eqref{Gaglias} and   Clarkson’s inequality  \cite[Lem. II.2.31]{MR2986590}, we observe that
\begin{align}
\nonumber
\norm{(\vartheta^ {k+1})^{1+\beta/2} }^2_ {L^2 (\Omega)}& \le \norm{\vartheta^ {k+1} }^{\frac{2}{\beta+2}}_ {L^{\beta+1} (\Omega)}\le \norm{\vartheta^ {k+1}+1 }^2_ {L^6 (\Omega)} \\
\nonumber
&\le C\left(  \norm{\nabla \vartheta ^ {k+1}}_ {\BL^2 (\Omega)}^{1/2} \norm{ \vartheta ^ {k+1}+1}_ {L^2 (\Omega)}^{1/2}+\norm{ \vartheta ^ {k+1}+1}_ {L^2 (\Omega)}\right)^2\\
&	\label{proof-apriori-55a} 
\le \frac{c_1}{8} \norm{\nabla \vartheta ^ {k+1}}^2_ {\BL^2 (\Omega)}+\left(8 c_1C^2+2C \right) \left(  \norm{ \vartheta ^ {k+1}}^2_ {L^2 (\Omega)}+\abs{\Omega}^2\right), 
\end{align}
From the definition of $w^k$, we have
\begin{align}
 \nonumber \norm{ \vartheta ^ {k+1}-\vartheta ^ k} ^2 _ {L^2 (\Omega)}&=\norm{ (w ^ {k+1})^ {2/(\beta+2)}-(w ^ {k})^ {2/(\beta+2)}} ^2 _ {L^2 (\Omega)}\\&
 \label{proof-apriori-56}
\ge \norm{ (w ^ {k+1})^ {2/(\beta+2)}} ^2 _ {L^2 (\Omega)}-\norm{ (w ^ {k})^ {2/(\beta+2)}} ^2 _ {L^2 (\Omega)}.
\end{align}
Combining  \eqref{proof-apriori-47}, \eqref{proof-apriori-50a}, \eqref{proof-apriori-52}, 	  \eqref{proof-apriori-51}, \eqref{proof-apriori-54}, \eqref{proof-apriori-55}, \eqref{proof-apriori-55a}, and \eqref{proof-apriori-56} and multiplying both sides of the new estimate in $\tau$, we obtain for sufficiently small $\tau $  that
\begin{align}
\nonumber
\norm{ \right. &\left. \vartheta ^ {k+1}} ^2 _ {L^2 (\Omega)}-\norm{ \vartheta ^ {k}} ^2 _ {L^2 (\Omega)}+\frac{c_1 \tau}{2} \left(\norm{\nabla \vartheta ^ {k+1}}^ 2 _{\BL^2 (\Omega)}+\norm{\abs{\vartheta ^ {k+1}}^ {\beta/2}\,\nabla \vartheta ^ {k+1}}^ 2 _{\BL^2 (\Omega)}\right) \\
\nonumber
& \quad+\frac{c_1\tau}{2(\beta+2)} \norm{ \nabla w ^ {k+1} }^ 2 _{\BL^2 (\Omega)} +\tau \norm{ (w ^ {k+1})^ {2/(\beta+2)}} ^2 _ {L^2 (\Omega)}-\tau \norm{ (w ^ {k})^ {2/(\beta+2)}} ^2 _ {L^2 (\Omega)}\\
\nonumber
& \lesssim \gamma_ \kappa \tau \left( \norm{ \nabla w ^ {k} }^ 2 _{\BL^2 (\Omega)}+\norm{\vartheta^k}^2_ {L^2 (\Omega)}+\norm{ \nabla \vartheta^ {k}   }^2_ {\BL^2 (\Omega)}\right)+\tau\norm{\vartheta^ {k}   }^2_ {L^2 (\Omega)}+\tau\norm{\nabla \vartheta^ {k}   }^2_ {L^2 (\Omega)} \\
\nonumber
&\quad+\gamma _\kappa \tau \norm{\Bf ^ k }^{(\beta +2)^2 /(\beta +1)^2}_{\BL^2 (\Omega)}+\tau ^2  \norm{\gamma ^k}^2_{L^2 (\Omega)} 
+\tau \norm{\overline{\gamma }^k}^2_ {L^2 (\Gamma)}+\tau(\gamma _ \kappa+1)\abs{\Omega}^2.
\end{align}
Summing over  $k$ (with $k=1,2,\cdots,L-1$), 
and using the discrete Gronwall's lemma leads to the inequality 
\begin{align}
\nonumber
\norm{\vartheta ^ {L}}^2_ {L^2 (\Omega)}&+ \tau   \norm { \nabla \vartheta ^ {L}} ^2 _ {\BL^2 (\Omega)}+\frac{\tau}{\beta+2} \norm{ \nabla w ^ {L} }^ 2 _{\BL^2 (\Omega)} \lesssim \tau \sum_{k=1}^{L-1} \norm{\overline{\gamma }^k}^2_ {L^2 (\Gamma)} \\
\label{proof-apriori-57}
&+\gamma _\kappa \tau \sum_{k=1}^{L-1} \norm{\Bf ^ k }^{(\beta +2)^2 /(\beta +1)^2}_{\BL^2 (\Omega)}+\tau ^2  \sum_{k=1}^{L-1} \norm{\gamma ^k}^2_{L^2 (\Omega)}+\tau(\gamma _ \kappa+1)\abs{\Omega}^2. 
\end{align}
Combining this estimate with \eqref{Proof- appriori-46-a} results in the following estimate
\begin{align}
\nonumber 
\frac{\tau ^2}{2}\norm{\CE (\delta _\tau ^ {L-1}\Bu )) }^2_ {\BL ^2 (\Omega)}&+\norm{\CE (\Bu ^ {L})) }^2_ {\BL ^2 (\Omega)} \\
\nonumber&\lesssim \frac{C _ {G, \Bu}\rho ^2\tau^2 }{ \alpha _ \kappa }\left(\sum_{k=1}^{L-1} \norm{\overline{\gamma }^k}^2_ {L^2 (\Gamma)}+\gamma _\kappa  \sum_{k=1}^{L-1}  \norm{\Bf ^ k }^{(\beta +2)^2 /(\beta +1)^2}_{\BL^2 (\Omega)}\right. \\&
\quad \left. +\tau   \sum_{k=1}^{L-1} \norm{\gamma ^k}^2_{L^2 (\Omega)} + \tau(\gamma _ \kappa+1)\abs{\Omega}^2 \right) +  \frac{\tau ^2 }{2 \alpha _ \kappa}
\sum_{k=1}^{L-1}\norm{\Bf^k}_{\BL ^2 (\Omega)}^2.
\end{align}
Finally, the combination of \eqref{k+1 step for u-1}, \eqref{Proof- appriori-46}, and the above estimate as well as a Gronwall's lemma completes the proof.
\end{proof}
\begin{remark}
In case $\kappa =\mathcal O (h)$,  the results of the previous lemma stay valid if and only if ${\tau }{h ^ {-1}}= \mathcal{O}(1)$, which leads to the conditional stability.  
\end{remark}
In the following, we present   {an a priori estimate  for $\CE(\Bu ^ L) $ in the $\CA$-norm.}

\begin{lemma}(An a priori estimate for the strain tensor of displacement 
at the $L$-th time step in $\CA$-norm)
		\label{Lem-a- priori estimate in A-norm}
	Let $ L \in \{ 1,2,\cdots, M\}$, $\Bu  ^L \in  \BH _0^ 1 (\Omega)$, and $\varphi ^L,\, \vartheta ^L \in H ^1 (\Omega)$ be the solutions of \eqref{variational-elasticity.eq},  \eqref{variational-phase-field.eq}, and \eqref{variational-heat.eq}, respectively. Then, for  $\Bf ^L\in \BL^2(\Omega),\, \overline{\gamma}^L \in L^2(\Gamma)$ and $\gamma ^L \in H^1(\Omega)$ we have the following a priori estimate in the $\CA$-norm 
 	\begin{align}
 \nonumber
\norm{\CE (\Bu ^ {L})) }^2_ {\CA} &\le C _ {st, \Bu, \CA} \left( \frac{\rho ^2\tau^2 }{ \kappa } \widehat {\mathcal{L}}_{1,L}+ \frac{\tau ^2}{{\kappa}}
 \sum_{k=0}^{L-1}\norm{\Bf^k}_{\BL ^2 (\Omega)}^2\right) ,
 \end{align}
 where
 \begin{align*}
\widehat{ \mathcal{L}}_{1,L} :=\sum_{k=0}^{L-1} \norm{\overline{\gamma}^k}^2_ {L^2 (\Gamma)}+ \kappa ^{-1} \sum_{k=0}^{L-1}  \norm{\Bf ^ k }^{(\beta +2)^2 /(\beta +1)^2}_{\BL^2 (\Omega)} +\tau   \sum_{k=0}^{L-1} \norm{\gamma ^k}^2_{L^2 (\Omega)}+\abs{\Omega}^2,  
 \end{align*}    
 and  the nonegative constant $C_ {st, \vartheta, \CA}$ is independent of $\tau$, $h$, $\ell$, and $\kappa$.
\end{lemma}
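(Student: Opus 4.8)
The plan is to reuse the machinery already developed in Lemma \ref{Lem-a- priori estimate-semi discrete}, but to keep track of the energy in the $\CA$-norm rather than in the plain $\BL^2$-norm. Concretely, I would again test \eqref{variational-elasticity-semi discrete.eq} with $\Bv = \delta_\tau^k \Bu$ and exploit the symmetry of $\CA$ to produce the telescoping structure
\begin{align*}
\frac{1}{2\tau}\left(\norm{\partial_\tau^k \Bu}^2_{\BL^2(\Omega)} - \norm{\partial_\tau^{k-1}\Bu}^2_{\BL^2(\Omega)}\right)
+ \frac{\kappa}{2\tau}\left(\norm{\CE(\Bu^{k+1})}^2_{\CA} - \text{(cross terms in }\CE(\Bu^{k-1})\text{)}\right),
\end{align*}
where now the factor $\skp{\CA(\CE(\Bu^{k+1})),\CE(\Bu^{k+1})}$ is literally $\norm{\CE(\Bu^{k+1})}^2_{\CA}$ by the very definition of the $\CA$-norm. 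The right-hand side is handled exactly as before: the thermal coupling term $\rho\int_\Omega \vartheta^k \nabla\cdot\delta_\tau^k\Bu\,dx$ is rewritten via integration by parts as $-\rho\int_\Omega \nabla\vartheta^k\cdot\delta_\tau^k\Bu\,dx$ and absorbed using Young's inequality together with Korn's inequality, and the load term $\int_\Omega \Bf^k\cdot\delta_\tau^k\Bu\,dx$ is absorbed into the discrete kinetic energy $\norm{\partial_\tau^k\Bu}^2_{\BL^2(\Omega)}$.

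Next I would apply Lemma \ref{Lem-holder contin}: the equivalence $C_{\text{ell},\CA}\norm{\boldsymbol\tau}^2_{\BL^2(\Omega)} \le \norm{\boldsymbol\tau}^2_{\CA} \le C_{\mu,\lambda}\norm{\boldsymbol\tau}^2_{\BL^2(\Omega)}$ lets me convert the cross terms involving $\CE(\Bu^{k-1})$ (which appear in the $\CA$-inner product) into terms controlled by $\norm{\CE(\Bu^{k-1})}^2_{\CA}$, so that after choosing $\tau$ small enough one obtains an inequality of the form
\begin{align*}
\frac{\alpha_\kappa}{2\tau}\norm{\CE(\Bu^{k+1})}^2_{\CA}
\lesssim \rho^2\tau\norm{\nabla\vartheta^k}^2_{\BL^2(\Omega)} + \tau\norm{\Bf^k}^2_{\BL^2(\Omega)} + \beta_{\kappa,\tau}\norm{\CE(\Bu^{k-1})}^2_{\CA} + (\text{kinetic terms}),
\end{align*}
with $\alpha_\kappa \sim \kappa$. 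Summing over $k$ and invoking the discrete Gronwall lemma collapses this to a bound on $\norm{\CE(\Bu^L)}^2_{\CA}$ in terms of $\kappa^{-1}\tau^2\sum_k\norm{\nabla\vartheta^k}^2_{\BL^2(\Omega)}$ and $\kappa^{-1}\tau^2\sum_k\norm{\Bf^k}^2_{\BL^2(\Omega)}$. Finally, I would substitute the heat estimate \eqref{proof-apriori-57} — which bounds $\tau\sum_k\norm{\nabla\vartheta^k}^2_{\BL^2(\Omega)}$ in terms of $\mathcal{L}_{1,L}$, and hence (since $\gamma_\kappa = \rho^4/\kappa$) in terms of $\widehat{\mathcal{L}}_{1,L}$ after factoring out the appropriate power of $\kappa$ — to arrive at the claimed estimate with the constant $\widehat{\mathcal{L}}_{1,L}$ as stated.

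The bookkeeping is the main nuisance rather than a genuine obstacle: one must be careful that the cross terms $\skp{\CA(\CE(\Bu^{k+1})),\CE(\Bu^{k-1})}$ and $\skp{\CA(\CE(\Bu^{k-1})),\CE(\delta_\tau^k\Bu)}$ are split so that the $\CE(\Bu^{k+1})$ pieces can be reabsorbed on the left while the $\CE(\Bu^{k-1})$ pieces carry a coefficient that stays summable under Gronwall — this is where the $\beta_{\kappa,\tau} = \tau^{-1}(\kappa C_{\text{ell},\CA}^{-1}C_{\mu,\lambda} + \kappa\tau^2 C_{\mu,\lambda}/2)$ structure comes in, and the condition $\tau$ sufficiently small (relative to the ratio of Lamé constants) is exactly what makes the Gronwall constant $\tau$-independent. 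The one point deserving care is the dependence on $\kappa$: since both $\alpha_\kappa$ and $\beta_{\kappa,\tau}$ scale linearly in $\kappa$, the $\kappa$'s nearly cancel in the Gronwall step, leaving the single explicit factor $\kappa^{-1}$ (equivalently $\alpha_\kappa^{-1}$) in front, precisely as in the $\BL^2$-version; combined with $\gamma_\kappa = \rho^4\kappa^{-1}$ this reproduces the two occurrences of $\kappa^{-1}$ displayed in the statement. No new technique beyond those already used in Lemma \ref{Lem-a- priori estimate-semi discrete} is required.
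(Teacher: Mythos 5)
Your proposal matches the paper's proof: test \eqref{variational-elasticity-semi discrete.eq} with $\Bv = \delta_\tau^k\Bu$, rerun the Lemma~\ref{Lem-a- priori estimate-semi discrete} calculation while retaining the $\CA$-norm (no extra work is needed here, since the term $\skp{\CA(\CE(\Bu^{k+1})),\CE(\Bu^{k+1})}$ that arises from the symmetry of $\CA$ is literally $\norm{\CE(\Bu^{k+1})}^2_\CA$), obtain the $\CA$-norm analog of \eqref{Proof- appriori-46-a}, and close by substituting the heat estimate \eqref{proof-apriori-57}. The only small redundancy is your detour through the norm equivalence of Lemma~\ref{Lem-holder contin}: the cross terms $\skp{\CE(\Bu^{k+1}),\CA(\CE(\Bu^{k-1}))}$ and $\skp{\CA(\CE(\Bu^{k-1})),\CE(\delta_\tau^k\Bu)}$ are already $\CA$-inner products by symmetry, so Cauchy--Schwarz in the $\CA$-norm followed by Young's inequality suffices directly, without converting back and forth to $\BL^2$.
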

 \begin{proof}
 Choosing  $\Bv = \delta _ \tau ^k \Bu $
 as the test function in \eqref{variational-elasticity-semi discrete.eq},  and in a procedure similar to Lemma \ref{Lem-a- priori estimate-semi discrete}, we get
 \begin{align}
 \label{Proof- appriori-46-a norm}
\frac{\tau ^2}{2}\norm{\CE (\delta _\tau ^ k\Bu )) }^2_ {\CA}+\norm{\CE (\Bu ^ {k+1})) }^2_ {\CA}&\le C_{G, \Bu}\frac{4 \rho ^2\tau^2 }{ \kappa}\norm{\nabla \vartheta ^ k }_{\BL^2 (\Omega)}^2+ \frac{\tau ^2 }{\kappa}\norm{\Bf^k}_{\BL ^2 (\Omega)}^2.
 \end{align}
 Then, applying \eqref{proof-apriori-57} in this estimate completes  the  proof.
 \end{proof}

\subsection{Fully discretized variational formulation} 
{Let ${\mathcal T}_h=\{T_1,\dots,T_N\}$ 
 be a quasi-uniform triangulation of 
$\Omega$  
with the  mesh width 
$h := \max_{T_i\in \mathcal{T}_h}{\rm diam}(T_i)$, where 
the elements $T_i \in \mathcal{T}_h$ are open triangles (for $d = 2$) or tetrahedra
(for $d = 3$). }
The mesh $\T_h$ is assumed to be  regular in the  sense of Ciarlet, additionally we assume the elements are 
$\gamma$-shape regular in the sense that 
we have ${\rm diam}(T_i) \le \gamma\,|T_i|^{1/{d}}$ for all $T_i\in\mathcal{T}_h$. 
{Here $|T_i|$ denotes the volume (for $d = 3$) or the area
	(for $d = 2$) of $T_i$.}
In order to provide a Galerkin discretization for \eqref{variational-elasticity.eq}-\eqref{variational-heat.eq}, we use the discrete space $\BS _0^{1,1} (\T _h) \times S ^{1,1} (\T _h)\times S ^{1,1} (\T _h) $
where
\begin{align*}
\BS ^{1,1} (\T _h) := \{ \Bu \in \BH^1 (\Omega) \qquad : \qquad \Bu | _ T \in \BP _1(T)  \quad \forall T \in \T _ h \},\\
S ^{1,1} (\T _h) := \{ u \in H^1 (\Omega) \qquad : \qquad u | _ T \in P _1(T)  \quad \forall T \in \T _ h \},
\end{align*}
and $P_1(T)$ denotes the space of polynomials of maximal degree 1 on  $T$, and $\BP_1(T):=(P_1(T))_{i=1}^d$.
We set
 $\BS_0 ^{1,1} (\T _h):= \BS ^{1,1} (\T _h)  \cap \BH _0 ^1 (\Omega)$ and $ S_0 ^{1,1} (\T _h):= S ^{1,1} (\T _h)  \cap H _0 ^1 (\Omega) $.
We define
\begin{align*}
\BV_h:=\BS_0 ^{1,1} (\T _h), \qquad W_h := S ^{1,1} (\T _h), \qquad Z_h := S ^{1,1} (\T _h).
\end{align*}
Then, we can introduce a
Ritz operator  $\BJ_h:= (\BJ_{h,\Bu}, \CJ_{h,\varphi}, \CJ_{h,\vartheta} ) : \BV \times W\times Z \rightarrow \BV_h \times W_h\times Z_h$ thorough the following equalities (see e.g., \cite[Sec. 5]{MR1770343}, \cite[Sec. 2]{MR1111453}, \cite[Sec. 2]{MR1151067} and \cite[Sec. 3]{MR958875})
\begin{align*}
 \skp{ \nabla \BJ_{h,\Bu} \Bu -\nabla \Bu , \nabla \Bv_h}&=0 \quad &&\forall \Bv_h \in \BV_h,\\
\skp{\nabla {\CJ_{h,\varphi} \varphi - \nabla \varphi}, \nabla  z_h }&=0\quad &&\forall w_h \in W_h, \\
 \skp{ \overline h \left(  \nabla \CJ _ {h,\vartheta} \vartheta - \nabla \vartheta \right) , \nabla w_h}&=0 \quad &&\forall z_h \in Z_h,
	\end{align*}
	where $ \overline h : \Omega \times I \rightarrow \R^+$ is an nonegative function.
Moreover, the entries of $\BJ _h$ satisfy the properties:
\begin{itemize}
	\item 
	The operator $\CJ _{h, \varphi }$ (and similarly  $\CJ _{h, \vartheta }$) satisfies the following  stability estimates (see e.g., \cite[Lem. 9]{MR1770343})
\begin{align*}
	\norm{\CJ_{h, \varphi } \varphi}_ {L^2 (\Omega)} &\le C _s	\norm{ \varphi}_ {L^2 (\Omega)} \qquad \forall \varphi \in L^2 (\Omega),\\
	\norm{\nabla \CJ_{h, \varphi } \varphi}_ {\BL^2 (\Omega)}& \le C ^\prime _s	\norm{\nabla  \varphi}_ {\BL^2 (\Omega)}  \qquad \forall \varphi \in H^1  (\Omega).
	\end{align*}
	\item 
	The operator $\CJ _{h, \varphi }$(and similarly  $\CJ _{h, \vartheta }$)  satisfies the following   approximation property (see e.g., \cite[Lem. 3.2]{MR1742264})
	\begin{align}
	\label{appphi}
	\norm{\varphi -\CJ_{h,\varphi} \varphi}_ {H^k (\Omega)}\lesssim h ^ {l-k}\norm{\varphi}_ {H^{l} (\Omega)} \quad k \in \{0,1\},\, l=k+1 \quad \forall \varphi \in H^l (\Omega).
	\end{align}

	\item 
	The operator $\BJ _{h, \Bu}$ satisfies the following  stability estimates \cite[Lem. 9]{MR1770343}
	\begin{align}
	\norm{\BJ_{h,\Bu} \Bu}_ {\BL^2 (\Omega)} &\le \widehat{ C}_s	\norm{ \Bu}_ {\BL^2 (\Omega)} \qquad \forall \Bu  \in \BL^2 (\Omega),\\
	\norm{\nabla \BJ_{h,\Bu} \Bu }_ {\BL^2 (\Omega)}& \le \widehat{ C}^\prime_s	\norm{\nabla  \Bu}_ {\BL^2 (\Omega )}  \qquad \forall \Bu \in \BH^1 _ 0  (\Omega),
	\end{align}
where $C_s,\, C_s ^ \prime,\, \widehat{ C}_s$, and $ \widehat{ C}^\prime_s$ are constants independent of $h$.
	\item 
	The operator $\BJ _{h, \Bu}$ satisfies the following   approximation property \cite[Lem. 9]{MR1770343}
	\begin{align}\label{appu}
	\norm{\Bu -\BJ_{h, \Bu} \Bu}_ {\BH^k (\Omega)}\lesssim h ^ {l-k}\norm{\Bu}_ {\BH^{l} (\Omega)}, \quad k \in \{0,1\},\, l=k+1 \quad \forall \Bu \in  \BH^l (\Omega).
	\end{align}
\end{itemize}
Let 
$\BBI _h : \BH^1 (\Omega)\rightarrow \BS^ {1,1}(\T _h) $ and $\CI _h :  H^1 (\Omega)\rightarrow S^ {1,1}(\T _h) $
 be Scott-Zhang operators with the  local approximation  properties 
(see e.g., \cite[Lem. 1.130]{MR3702417}):

\begin{align}\label{approx-Scott-Zhang-a}
\norm{\Bu -\BBI_h \Bu}_ {H^k (T)}&\lesssim  h ^ {l-k}\norm{\Bu}_ {\BH^{l} (\omega_T)} \quad &&k \in \{0,1\},\, l=k+1 \quad \forall \Bu \in  \BH^l (\omega _T),\\
\label{approx-Scott-Zhang-b}
	\norm{\varphi -\CI_h \varphi}_ {H^k (T)}&\lesssim h ^ {l-k}\norm{\varphi}_ {H^{l} (\omega_ T)} \quad && k \in \{0,1\},\, l=k+1 \quad \forall \varphi \in  H^l (\omega _T),
\end{align}
where $\omega _T$ is the  patch of the  element $T \in \T_h$.\\[2mm]
The fully discretized scheme for \eqref{variational-elasticity.eq}-\eqref{variational-heat.eq} reads as: For $k \in \{ 0,1,\cdots, M-1\}$, find $(\Bu ^{k+1} _ h, \varphi ^{k+1}_h , \vartheta _h ^{k+1}) \in \BV _h \times W_h \times Z_h$ such that
	\begin{subequations}
	\begin{alignat}{8}
\label{variational-elasticity-fully discrete.eq}
\nonumber
\int_\Omega \partial ^k _{\tau \tau} \Bu_h  \,\Bv _h dx&+\int_{\Omega} \left(  g(\varphi_h ^ {k+1} )+\kappa\right)  \CA \left( \CE (\Bu _h^ {k+1})\right): \CE(\Bv_h) \,dx -\rho		\int_{\Omega} \vartheta _h ^k \,\BI: \CE(\Bv_h)\,dx\\
&\quad =	 \int_{\Omega} \BBI_h \Bf ^k\cdot \Bv_h dx 
 &&\hspace{-0.9cm} \forall \Bv_h \in \BV_h,\\
\nonumber
\ell \int_{\Omega} \nabla \varphi_h ^{k+1}   \cdot \nabla w _h \, dx &+ \frac{1}{\ell} \int_{\Omega}  \varphi_h^{k+1}  \,  w _h\,dx 
{+{\gamma_0}\int_{\Omega}  [\varphi_h^{k+1}-\varphi _h^{k}]_+ \,w_h\,dx }\\&
\label{variational-phase-field-fully discrete.eq}
= \frac{-1}{\mathcal{G}_c} \int_{\Omega}g_c (\varphi_h ^ {k+1}, { \CE } (\Bu _h ^{k} )) \, w _h \,  dx  && \hspace{-0.9cm}\forall w _h\in W_h,\\[2mm]
\label{variational-heat-fully discrete.eq}
\nonumber
\int_{\Omega} \partial ^k _\tau \vartheta _h \, z_h dx &+ \int_{\Omega} K(\vartheta _h^ {k+1})\,\nabla \vartheta_h ^ {k+1} \cdot \nabla z_h dx+ \rho  \int_ \Omega\vartheta _h ^ k\nabla \cdot \delta_\tau ^ k \Bu_h \,z _hdx\\&
+ \int _ {\Gamma } \overline{\gamma} ^ {k}\, z _h ds= \int_{\Omega} \mathcal{I} _h \gamma  ^ {k}\, z _h ds  && \hspace{-0.9cm}\forall z_h \in Z_h,
	\end{alignat}
\end{subequations}
{and we set $\Bu_h ^ {-1}:= \Bu_0- \tau \Bv_0$ and  $\vartheta_h ^0 := \vartheta _0$.}
\subsubsection{A priori estimates}
In the following lemma, for $L \in \{ 1,2,\cdots, M\}$, we present a priori estimates 
for $\vartheta_h^L$ in {the} $L^2$-norm, and  $\CE (\Bu_h ^ {L})$ in 
{the} $\BL^2$- and $\CA$-norms. We also present a priori estimates for $\nabla \vartheta ^L _h$ and $\CE (\delta _\tau ^ {L-1}\Bu _h))$ with respect to  {the} $\BL^2$-norm.
\begin{lemma}(A priori estimates for $\vartheta ^L_h$, $\nabla \vartheta_h^L$, $\CE (\delta _\tau ^ {L-1}\Bu _h)$, and  $\CE (\Bu_h ^ {L}))$ for $L \in \{ 1,2,\cdots, M\}$)
	Let $ L \in \{ 1,2,\cdots, M\}$,
	\label{Lem-a- priori estimate-discrete solution}
	and  $\Bu^L_h  \in \BV _h$, $\varphi^L_h \in W_h$ and  $\vartheta^L_h \in Z_h$ be the solutions of \eqref{variational-elasticity-fully discrete.eq}, \eqref{variational-phase-field-fully discrete.eq} and  \eqref{variational-heat-fully discrete.eq}, respectively. Then, for  $\Bf^L\in \BL^2(\Omega)$, $ \overline{\gamma }^L \in L^2(\Gamma)$, and $\gamma ^L \in L^2(\Omega)$, we have the a priori estimate for $\vartheta _h ^ L$
	\begin{align}
	\nonumber
	\norm{\vartheta_h ^ {L}}^2_ {L^2 (\Omega)}+ \tau   \norm { \nabla \vartheta_h ^ {L}} ^2 _ {\BL^2 (\Omega)} &
\le C  _ {st, \vartheta} \sum_{k=0}^{L-1}\left( \tau \norm{\overline{\gamma }^k}^2_ {L^2 (\Gamma)}+\gamma _\kappa \tau \norm{\BBI _h\Bf ^ k }^{(\beta +2)^2 /(\beta +1)^2}_{\BL^2 (\Omega)} \right. \\[2mm]
	\label{eq-a- priori estimate-full discrete-heat}
&\left. \quad+\tau ^2  \norm{\CI_h \gamma ^k}^2_{L^2 (\Omega)}+\tau((\gamma _ \kappa +1)\abs{\Omega}^2)\right)
\le C^ \prime_ {st, \vartheta} \tau \mathcal{L}_{1,L},
	\end{align}	
	where $\mathcal L _ {1,L}$ and $\gamma_\kappa$ are defined in Lemma \ref{Lem-a- priori estimate-semi discrete}. The constant $C _ {st, \vartheta}$ only depends on $\Omega$, and the stability constants of $\BBI_h$ and $\CI_h$. Moreover, the constant  $C^ \prime_ {st, \vartheta} $ only depends on $\Omega$ and $C _ {st, \vartheta}$.
Then, we  have the following  estimates in the $\BL^2$-norm and the $\CA$-norm  
for  $\CE ( \Bu^L_h)$     and $\CE (\delta _\tau ^ {L-1}\Bu _h)$:
	\begin{align}
\nonumber
\frac{\tau ^2}{2}\norm{\CE (\delta _\tau ^ {L-1}\Bu _h)) }^2_ {\BL ^2 (\Omega)}&+\norm{\CE (\Bu_h ^ {L})) }^2_ {\BL ^2 (\Omega)}
\\[2mm]& \nonumber
 \le C _ {st, \CE\Bu} \left( \frac{\rho ^2\tau^2 }{ \alpha _ \kappa } \mathcal{L}_{1,L}+ \frac{\tau ^2}{2\alpha _{\kappa}}
\sum_{k=0}^{L-1}\norm{\BBI _h\Bf^k}_{\BL ^2 (\Omega)}^2\right)\\[2mm]
 &\le C^ \prime  _ {st, \CE\Bu} \left( \frac{\rho ^2\tau^2 }{ \alpha _ \kappa } \mathcal{L}_{1,L}+ \frac{\tau ^2}{2\alpha _{\kappa}}
\sum_{k=0}^{L-1}\norm{\Bf^k}_{\BL ^2 (\Omega)}^2\right) ,
\end{align} 
and
 	\begin{align}
\nonumber
\norm{\CE (\Bu _h^ {L})) }^2_ {\CA} &\le C^ \prime  _ {st, \Bu, \CA} \left( \frac{\rho ^2\tau^2 }{ \kappa } \widehat {\mathcal{L}}_{1,L}+ \frac{\tau ^2}{{\kappa}}
\sum_{k=0}^{L-1}\norm{ \Bf^k}_{\BL ^2 (\Omega)}^2\right) ,
\end{align}
where the constants $C ^ \prime _ {st, \CE \Bu}$ and  $C^ \prime  _ {st, \Bu, \CA}$ only depend on $\Omega$, and the stability constants of $\BBI_h$ and $\CI_h$. Moreover, $ {\mathcal{L}}_{1,L}$,   $\alpha _ \kappa$  and $\widehat {\mathcal{L}}_{1,L}$ are defined in Lemmas \ref{Lem-a- priori estimate-semi discrete} and \ref{Lem-a- priori estimate in A-norm}, respectively.
\end{lemma}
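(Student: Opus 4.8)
The plan is to transport the energy arguments of Lemmas \ref{Lem-a- priori estimate-semi discrete} and \ref{Lem-a- priori estimate in A-norm} to the fully discrete scheme \eqref{variational-elasticity-fully discrete.eq}--\eqref{variational-heat-fully discrete.eq}. The only structural differences are that every test function must now be taken in the conforming subspaces $\BV_h,W_h,Z_h$, and that the data enter through the Scott--Zhang quasi-interpolants $\BBI_h\Bf^k$, $\CI_h\gamma^k$, whose $L^2$-stability will be used at the end to recover bounds in terms of $\Bf^k$ and $\gamma^k$. I note first that the discrete phase-field equation \eqref{variational-phase-field-fully discrete.eq} plays no role in these estimates: since $g(\varphi_h^{k+1})=(\varphi_h^{k+1})^2\ge 0$ pointwise, its contribution to the stiffness form is simply discarded and only the $\kappa$-part retained, exactly as in the semi-discrete proof.

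\textbf{Step 1: strain-tensor estimates.} First I would test \eqref{variational-elasticity-fully discrete.eq} with $\Bv_h=\delta_\tau^k\Bu_h$ for $k=1,\dots,M-1$, which is admissible since $\delta_\tau^k\Bu_h$ is then a linear combination of discrete functions. As in Lemma \ref{Lem-a- priori estimate-semi discrete}, the inertial term telescopes into $\tfrac1{2\tau}\big(\norm{\partial_\tau^k\Bu_h}_{\BL^2(\Omega)}^2-\norm{\partial_\tau^{k-1}\Bu_h}_{\BL^2(\Omega)}^2\big)$; using the symmetry and ellipticity of $\CA$ (Lemma \ref{Lem-holder contin}), the identity $\int_\Omega\nabla\vartheta_h^k\cdot\delta_\tau^k\Bu_h\,dx=\int_\Omega\vartheta_h^k\,\nabla\!\cdot\!\delta_\tau^k\Bu_h\,dx$ (valid because $\delta_\tau^k\Bu_h\in\BH_0^1(\Omega)$), Young's inequality, and discarding the nonnegative $g(\varphi_h^{k+1})$ term, I arrive at the exact discrete analogue of \eqref{Proof- appriori-46-a} with $\Bf^k$ replaced by $\BBI_h\Bf^k$. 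A discrete Gronwall inequality then produces the asserted bound for $\tfrac{\tau^2}{2}\norm{\CE(\delta_\tau^{L-1}\Bu_h)}_{\BL^2(\Omega)}^2+\norm{\CE(\Bu_h^L)}_{\BL^2(\Omega)}^2$ in terms of $\norm{\nabla\vartheta_h^k}_{\BL^2(\Omega)}$ and $\norm{\BBI_h\Bf^k}_{\BL^2(\Omega)}$; repeating the computation with the stiffness contributions measured directly in the $\CA$-norm (as in Lemma \ref{Lem-a- priori estimate in A-norm}) gives the companion $\CA$-norm inequality.

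\textbf{Step 2: heat estimate and coupling.} Next I would test \eqref{variational-heat-fully discrete.eq} with $z_h=\vartheta_h^{k+1}\in Z_h$. The lower bound on $K$ from Assumption \ref{assumption-initial data} recovers the coercive terms $c_1\norm{|\vartheta_h^{k+1}|^{\beta/2}\nabla\vartheta_h^{k+1}}_{\BL^2(\Omega)}^2+c_1\norm{\nabla\vartheta_h^{k+1}}_{\BL^2(\Omega)}^2$, and the transport term is handled by the H\"older estimate of \eqref{proof-apriori-47}, with $\norm{\CE(\delta_\tau^k\Bu_h)}_{\BL^2(\Omega)}$ controlled by Step 1. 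The auxiliary variable $w^{k+1}:=(\vartheta_h^{k+1}+1)^{(\beta+2)/2}$ is again used \emph{only} to bound terms --- from below via the Bernoulli inequality, and via a double application of Young's inequality with exponents $\beta+2$ and $\beta+1$, the Gagliardo--Nirenberg inequality \eqref{Gaglias}, the embedding $H^1(\Omega)\subset L^6(\Omega)$, and Clarkson's inequality --- and is never used as a test function, so no conformity with $Z_h$ is required and the chain \eqref{proof-apriori-47}--\eqref{proof-apriori-56} transfers verbatim with $\gamma^k$ replaced by $\CI_h\gamma^k$. Summing over $k=1,\dots,L-1$ and applying the discrete Gronwall lemma yields \eqref{eq-a- priori estimate-full discrete-heat} with interpolated data; substituting this back into the strain and $\CA$-norm inequalities of Step 1 closes the coupled system.

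\textbf{Step 3: removing the interpolants; main obstacle.} Finally I would invoke the $L^2$-stability $\norm{\BBI_h\Bf^k}_{\BL^2(\Omega)}\lesssim\norm{\Bf^k}_{\BL^2(\Omega)}$ and $\norm{\CI_h\gamma^k}_{L^2(\Omega)}\lesssim\norm{\gamma^k}_{L^2(\Omega)}$ of the Scott--Zhang operators, with constants depending only on $\Omega$ and the shape-regularity of $\T_h$, to pass to the sharper inequalities in the statement, all constants being independent of $\tau,h,\ell,\kappa$. I expect the delicate point to be bookkeeping rather than a single hard estimate: one must treat the initialization $k=0$ with care, since $\Bu_h^{-1}=\Bu_0-\tau\Bv_0$ need not lie in $\BV_h$ and hence $\delta_\tau^0\Bu_h$ is not an admissible test function --- as in the semi-discrete case this is circumvented by starting the Gronwall summation at $k=1$ and absorbing the initial-data term into the $\tau\norm{\Bv_0}_{\BL^2(\Omega)}^2$ already present in $\mathcal{L}_{1,L}$ --- and one must track the $\kappa$-dependence through $\alpha_\kappa=\kappa\,C_{\mathrm{ell},\CA}/2$ and $\gamma_\kappa=\rho^4\kappa^{-1}$ so that the $\mathcal{O}(\kappa^{-1})$ growth is isolated precisely as claimed and all remaining constants stay uniform in the discretization and regularization parameters.
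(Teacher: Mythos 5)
Your proposal is correct and takes the same route as the paper, whose proof of this lemma is just a one-line reference back to Lemmas \ref{Lem-a- priori estimate-semi discrete} and \ref{Lem-a- priori estimate in A-norm}; your expansion --- noting that $g(\varphi_h^{k+1})\ge 0$ is discarded so \eqref{variational-phase-field-fully discrete.eq} never enters, that the auxiliary variable $w^{k+1}$ is used only for bounds and never as a test function (so no conformity with $Z_h$ is needed), and that $L^2$-stability of $\BBI_h,\CI_h$ yields the final form --- fills in precisely what is left implicit. One small bookkeeping slip: $\tau\norm{\Bv_0}^2_{\BL^2(\Omega)}$ is not a term of $\mathcal{L}_{1,L}$ (it appears only in the separate $\BL^2$-estimate \eqref{eq-a- priori estimate-semi discrete-disp} of $\Bu^L$, which Lemma \ref{Lem-a- priori estimate-discrete solution} does not include), but your underlying point --- start the Gronwall summation at $k=1$ because $\Bu_h^{-1}=\Bu_0-\tau\Bv_0$ need not lie in $\BV_h$ --- is a genuine subtlety and is handled correctly.
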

\begin{proof}
	The proof is done  similar  to Lemmas \ref{Lem-a- priori estimate-semi discrete} and \ref{Lem-a- priori estimate in A-norm}  based on   \eqref{variational-elasticity-fully discrete.eq}-\eqref{variational-heat-fully discrete.eq}.
\end{proof}

\section{Main results}\label{section4} 
In this section, we state and proof the main result of this article, which is to present a priori error estimates for   $\Bu ^L _h -\Bu ^L$ and $ \varphi ^L _h-{\varphi ^L}$, and $ \vartheta ^L _h-{\vartheta ^L}$ $(L\in \left\lbrace 1,2,\cdots,M\right\rbrace )$, respectively. 
We introduce the  following operators:
\begin{align*}
[\CA _1 (\varphi , \Bu): \Bv]&:=\int_{\Omega} (g (\varphi)+\kappa)\CA (\CE (\Bu ) ) : \CE(\Bv)  \,dx, \\
[\CA _2 (\varphi, \Bu): w]&:=  \int_{\Omega}g_c (\varphi, { \CE } (\Bu)) \, w \,  dx, \\
[\CA _3 (\vartheta , \Bu): z]&:=	\int_{\Omega} \vartheta \,\nabla \cdot \Bu\,z \,dx.
\end{align*}
{For $k=0,1,\cdots,M$}, we also define  the following difference functions
\newpage
\begin{align*}
\widehat \Be^k _ {\Bu,h} := \Bu ^k _h - \Bu^k, 
\qquad  
\widehat e^k _ {\varphi,h} := \varphi^k  _h -\varphi^k, 
\qquad  
\widehat e ^k _ {\vartheta,h} := \vartheta ^k  _h -\vartheta ^k.
\end{align*}

\begin{theorem}
	\label{Thm. the main result}
	Let $k \in \left\lbrace {1,\cdots,M}\right\rbrace $, and $\Bu ^k\in \BH ^ 2 (\Omega) \cap \BH^1_0(\Omega)$,  $\varphi ^k\in  H ^ 1 (\Omega)$ and $\vartheta ^k\in  H ^ 1 (\Omega)$ be the solutions of \eqref{variational-elasticity.eq}, \eqref{variational-phase-field.eq} and \eqref{variational-heat.eq}, respectively. Moreover, we consider $\Bu^k _h$, $\varphi ^k _h$ and  $\vartheta ^k  _h$ as the solutions of \eqref{variational-elasticity-fully discrete.eq}, \eqref{variational-phase-field-fully discrete.eq} and \eqref{variational-heat-fully discrete.eq}, respectively. Assume the space and time discretization parameters $h$, $\tau$ and the right hand side functions $\Bf ^k \in \BH^1 (\Omega)$, $\gamma ^k \in H^1 (\Omega)$, and $\overline{\gamma }^k\in L^2 (\Gamma)$ satisfy the assumptions:
	\begin{itemize}
		\item $\ell \left( \frac{\rho ^2\tau^2 }{ \kappa } \widehat {\mathcal{L}}_{1,M}+ \frac{\tau ^2}{{\kappa}}
		\sum_{m=0}^{M}\norm{ \Bf^m}_{\BL ^2 (\Omega)}^2\right) \le1$, \\[2mm]
		\item 
		$\ell \left( \frac{\rho \tau }{{ \alpha _ \kappa} } \sqrt{\mathcal{L}_{1,M}}+ \frac{\tau }{\sqrt{2\alpha _{\kappa}}}
		\sum_{m=0}^{M}\norm{\Bf^m}_{\BL ^2 (\Omega)}^2\right) \le 1$, \\[2mm]
		\item  $2 \tau^2 {\mathcal{L}}_{1,M} \le\kappa C _ {ell, \CA}$, \\[2mm]
		\item  $ \left( \frac{\rho \tau }{ \sqrt{\alpha _ \kappa} }\sqrt{ {\mathcal{L}}_{1,M}}+ \frac{\tau }{\sqrt{\alpha _ \kappa}}
		\sum_{m=0}^{M}\norm{ \Bf^m}_{\BL ^2 (\Omega)}\right)  \le 1$,
	\end{itemize}
where $ {\mathcal{L}}_{1,M}$,   $\alpha _ \kappa$  and $\widehat {\mathcal{L}}_{1,M}$ are defined in Lemmas \ref{Lem-a- priori estimate-semi discrete} and \ref{Lem-a- priori estimate in A-norm}, and $C_ {ell, \CA}$ is the ellipticity constant of $\CA$. Then, the following  estimate holds true for $L \in \{ 0,1,\cdots, M-1 \}$
\begin{align}
\nonumber
\norm { \right. & \left. \partial _ \tau ^ {L} \widehat \Be _ {\Bu,h}  }^2_ {\BL^2 (\Omega)}+\kappa \tau  \norm{ \CE( \widehat \Be^ {L+1} _ {\Bu,h})}^2_ {\BL ^2 (\Omega)}+\norm {\widehat \Be ^ {L+1}_ {\vartheta,h}  }^2_ {L^2 (\Omega)}+\tau \norm {\nabla \widehat \Be ^ {L+1}_ {\vartheta,h}  }^2_ {\BL^2 (\Omega)} 
+{\ell \tau }\norm{\nabla \widehat  e ^ {L+1} _ {\varphi ,h}}^2 _ {\BL^2 (\Omega)} \\[2mm]
\nonumber
& +\frac{\tau }{\ell }\norm{\widehat e ^ {L+1}_ {\varphi ,h}}^2 _ {L^2 (\Omega)} \lesssim  \mathcal{L}_{1,L+1}   \tau^2 \norm{\nabla  \vartheta ^ {L+1}}^2_ {\BL^2 (\Omega)}+ \kappa ^ {-1}h^2
\left( \norm{ \CE (\Bu^{L+1}  )}^2_ {\BH ^1 (\Omega)} \norm{\varphi ^{L+1}} ^2 _ {H^1 (\Omega)}	
+\norm{\vartheta ^{L}} ^2 _ {H^1 (\Omega)}\right)\\[2mm]& \nonumber
+h^2  \norm{\CE(\Bu^{L+1})}^2_ {\BH^ 1(\Omega)}\norm{\varphi ^{L+1}} ^2 _ {H^1 (\Omega)}+
\ell ^ {-1}\tau h^2\left( \norm{ \CE (\Bu^{L+1}  )}^2_ {\BH ^1 (\Omega)} \norm{\varphi ^{L+1}} ^2 _ {H^1 (\Omega)}\right. \\[2mm]
& \nonumber 
\left. +\norm{\varphi ^{L+1}} ^2 _ {H^1 (\Omega)}+\norm{ \CE (\Bu^{L+1}  )}^4_ {\BH ^1 (\Omega)} \right) +\ell\tau  h^2  \norm{\CE(\Bu^{L+1})}^4_ {\BL^2(\Omega)}\norm{\varphi ^{L+1}} ^2 _ {H^1 (\Omega)}\\[2mm]
\nonumber
&
+\tau h^2\left(\norm{\partial_{\tau \tau} ^L \Bu}^2_ {\BH^1(\Omega)}+  \norm{\Bf^ {L+1}}^2_{\BH ^1 (\Omega)}+\norm{\vartheta^{L+1}}^2_ {H^1(\Omega)}\right)\\[2mm]& 
\nonumber
+h^2 \tau^2 \left(\norm{\vartheta^{L}}^2_ {H^1(\Omega)} \norm{\delta ^L_{\tau }  \Bu}^2_ {\BH^2(\Omega)}+\norm{\delta ^L_{\tau } \vartheta}^2_{H^1(\Omega)}+\norm{\gamma ^ {L+1}}^2_{H^1(\Omega)}\right).   
\end{align}
	where the constant of the inequality  is independent of critical parameters such as the mesh size $h$, $\tau$, $\ell$, $\mu$, $\lambda$ and $\kappa$. Moreover, $\mathcal{L}_{1,{L+1}} $ is defined in Lemma \ref{Lem-a- priori estimate-semi discrete}.
\end{theorem}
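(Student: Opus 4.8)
The plan is to use the classical decomposition of each error into a projection part and a Galerkin part, derive three coupled error identities, test them with discrete functions, and close the estimate by a discrete Gronwall inequality. First, with the Scott-Zhang operators $\BBI_h,\CI_h$ (or the Ritz operators $\BJ_{h,\Bu},\CJ_{h,\varphi},\CJ_{h,\vartheta}$), I would write, for every $k$,
\[
\widehat\Be^k_{\Bu,h}=\underbrace{\bigl(\Bu_h^k-\BBI_h\Bu^k\bigr)}_{=:\,\boldsymbol\xi^k}+\underbrace{\bigl(\BBI_h\Bu^k-\Bu^k\bigr)}_{=:\,\boldsymbol\eta^k},
\]
and analogously $\widehat e^k_{\varphi,h}=\xi^k_\varphi+\eta^k_\varphi$, $\widehat e^k_{\vartheta,h}=\xi^k_\vartheta+\eta^k_\vartheta$. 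The projection parts $\boldsymbol\eta^k,\eta^k_\varphi,\eta^k_\vartheta$ and their discrete time differences are bounded directly by \eqref{appu}, \eqref{appphi}, \eqref{approx-Scott-Zhang-a}, \eqref{approx-Scott-Zhang-b}, which, together with the regularity $\Bu^k\in\BH^2(\Omega)$ and $\varphi^k,\vartheta^k\in H^1(\Omega)$, produces precisely the $h^2$-type contributions on the right-hand side of the claim. The Galerkin parts $\boldsymbol\xi^k\in\BV_h$, $\xi^k_\varphi\in W_h$, $\xi^k_\vartheta\in Z_h$ lie in the discrete spaces and are therefore admissible test functions.

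Subtracting \eqref{variational-elasticity-fully discrete.eq}--\eqref{variational-heat-fully discrete.eq} from \eqref{variational-elasticity.eq}--\eqref{variational-heat.eq} taken at $t_{k+1}$ yields three error identities whose only ``new'' data are the temporal consistency defects (e.g.\ $\partial^k_{\tau\tau}\Bu-\partial_{tt}\Bu(t_{k+1})$, of order $\mathcal{O}(\tau)$ in the relevant norms) and the interpolation defects of $\Bf$ and $\gamma$. For the momentum identity I would test with $\Bv_h=\delta^k_\tau\boldsymbol\xi$, exactly as in Lemma \ref{Lem-a- priori estimate-semi discrete}: the inertial term telescopes into $\tfrac1{2\tau}\bigl(\norm{\partial^k_\tau\boldsymbol\xi}^2_{\BL^2(\Omega)}-\norm{\partial^{k-1}_\tau\boldsymbol\xi}^2_{\BL^2(\Omega)}\bigr)$; the elliptic term, using $g(\varphi_h^{k+1})\ge0$, the identity $a(a-b)=\tfrac12\bigl(a^2-b^2+(a-b)^2\bigr)$ in the $\CA$-inner product, and, after adjusting the coefficient from index $k+1$ to $k-1$, the $\CA$-norm a priori bound of Lemma \ref{Lem-a- priori estimate in A-norm} for the remainder, yields a telescoping $\tfrac{\kappa}{4\tau}$-multiple of $\norm{\CE(\boldsymbol\xi)}^2_{\CA}$; and the coupling term $\rho\,\widehat e^k_{\vartheta,h}$ together with the quadratic term $g(\varphi_h^{k+1})-g(\varphi^{k+1})$ are absorbed via Young's inequality together with $|g(a)-g(b)|\le 2|a-b|$ on $[0,1]$, H\"older's inequality, the Gagliardo--Nirenberg inequality \eqref{Gaglias}, the Lipschitz continuity of $\CA$ (Lemma \ref{Lem-holder contin}), and the a priori estimates of Lemmas \ref{Lem-a- priori estimate-semi discrete}--\ref{Lem-a- priori estimate-discrete solution}. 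For the heat identity I would test with $z_h=\xi^{k+1}_\vartheta$: the parabolic term telescopes in $\norm{\xi_\vartheta}^2_{L^2(\Omega)}$, the diffusion term gives $\gtrsim\tau\norm{\nabla\xi^{k+1}_\vartheta}^2_{\BL^2(\Omega)}$ through the lower bound on $K$ in Assumption \ref{assumption-initial data}, the term $K(\vartheta_h^{k+1})-K(\vartheta^{k+1})$ is treated by the Lipschitz and growth bounds on $K$ with \eqref{Gaglias}, and the convective coupling $\rho\,\vartheta\,\nabla\cdot\delta^k_\tau\Bu$ (written with the operator $\CA_3$) is estimated by H\"older with exponents $(3,6,2)$, \eqref{Gaglias}, Korn's inequality, and the strain a priori estimates.

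For the damage equation, since \eqref{variational-phase-field.eq} is a variational inequality over $W_+$ while \eqref{variational-phase-field-fully discrete.eq} is an equality carrying the penalization $\gamma_0[\varphi_h^{k+1}-\varphi_h^k]_+$, I would test the discrete equation with $w_h=\xi^{k+1}_\varphi$ and the continuous inequality with an admissible $w=\varphi^{k+1}+(\text{nonnegative correction})$, and use the monotonicity properties \eqref{eq.po}--\eqref{eq.po2} of the positive part to absorb the penalization contribution into $\tfrac{\tau}{\ell}\norm{\widehat e^{L+1}_{\varphi,h}}^2_{L^2(\Omega)}+\ell\tau\norm{\nabla\widehat e^{L+1}_{\varphi,h}}^2_{\BL^2(\Omega)}$; the Cauchy-stress difference $g_c(\varphi_h^{k+1},\CE(\Bu_h^k))-g_c(\varphi^{k+1},\CE(\Bu^k))$, which is quadratic in $\CE(\Bu)$, is split and bounded through the factorization of quadratic differences, \eqref{Gaglias}, and the a priori bounds on $\norm{\CE(\Bu^k)}_{\BL^2(\Omega)}$ and $\norm{\CE(\Bu_h^k)}_{\BL^2(\Omega)}$. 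Adding the three resulting estimates, all quantities on the left of the theorem appear with positive coefficients, and every remaining term is either of the stated $h$/$\tau$ structure or of the form (a priori bound)$\,\times\,$(error). Summing over $k=0,\dots,L$ — the choices $\Bu_h^{-1}=\Bu_0-\tau\Bv_0$ and $\vartheta_h^0=\vartheta_0$ make the initial error terms vanish — and invoking the discrete Gronwall inequality completes the proof, provided the coefficients multiplying the errors stay $\mathcal{O}(1)$; this is exactly what the four displayed hypotheses guarantee, since they force quantities such as $\kappa^{-1}\tau^2\mathcal{L}_{1,M}$, $\ell\,\kappa^{-1}\tau^2\widehat{\mathcal{L}}_{1,M}$, $\ell\,\alpha_\kappa^{-1}\tau\sqrt{\mathcal{L}_{1,M}}$ and $\tau^2\mathcal{L}_{1,M}/(\kappa C_{\text{ell},\CA})$ to be bounded.

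The main obstacle is the bookkeeping of the nonlinear and coupling terms so that the powers of $\kappa$ and $\ell$ emerge exactly as in the statement. The elliptic degeneracy forces division by $\kappa$ throughout — hence the factor $\alpha_\kappa=\kappa C_{\text{ell},\CA}/2$ and the $\kappa^{-1}$-scalings — while the $\ell$-weighted diffusion and $\ell^{-1}$-weighted reaction in the damage equation, combined with $\kappa=\mathcal{O}(\ell)$, dictate how the damage error couples to the strain and temperature errors; arranging that every such product is genuinely absorbed on the left-hand side, rather than building an unbounded exponential factor in the Gronwall step, is where the delicate work lies, and is the reason the sharp $\kappa$- and $\ell$-dependence of the a priori estimates in Lemmas \ref{Lem-a- priori estimate-semi discrete}--\ref{Lem-a- priori estimate-discrete solution} must be used.
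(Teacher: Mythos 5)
Your proposal follows essentially the same route as the paper's proof: you split each error into a projection part (handled by the approximation properties of $\BJ_{h,\Bu},\CJ_{h,\varphi},\CJ_{h,\vartheta}$) and a Galerkin part, subtract the fully discrete scheme from the continuous variational formulation, test with $\delta^k_\tau$ of the Galerkin displacement error and with the Galerkin heat/damage errors, bound the nonlinear coupling terms via Lemma \ref{Lem-holder contin}, the embeddings, \eqref{Gaglias}, and the a priori estimates of Lemmas \ref{Lem-a- priori estimate-semi discrete}--\ref{Lem-a- priori estimate-discrete solution}, absorb the penalty via \eqref{eq.po}--\eqref{eq.po2}, and close by summation, the discrete Gronwall lemma under the stated smallness hypotheses, and a final triangle inequality---which is precisely the structure of the paper's argument. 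The one place you are arguably more careful than the paper is in explicitly flagging the temporal consistency defect $\partial^k_{\tau\tau}\Bu-\partial_{tt}\Bu(t_{k+1})$, which the paper's error identities \eqref{error-elasticity.eq}--\eqref{error-heat.eq} appear to suppress; this is a harmless refinement, not a different approach.
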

\begin{proof}
	For $k=0,1,,\cdots, L$, we define the difference functions
	\begin{align*}
		\Be^k _ {\Bu,h} := \Bu ^k _h - \BJ _{h,\Bu}\Bu^k, 
\qquad  
e^k _ {\varphi,h} := \varphi^k  _h -\CJ _{h,\varphi}\varphi^k, 
\qquad  
e ^k _ {\vartheta,h} := \vartheta ^k  _h -\CJ _{h, \vartheta}\vartheta ^k,
	\end{align*}
then, subtracting \eqref{variational-elasticity-fully discrete.eq}-\eqref{variational-heat-fully discrete.eq} and \eqref{variational-elasticity.eq}-\eqref{variational-heat.eq} gives us
	\begin{subequations}
	\begin{alignat}{8}
		\label{error-elasticity.eq}
	\nonumber	 \skp{\partial ^k _{\tau\tau} \Be _ {\Bu,h}  , \,\Bv _h }&+
		\nonumber [\CA _1 (\varphi  ^{k+1}  _h , \Bu^{k+1} _h)-\CA _1 (\CJ _{h,\varphi}\varphi ^{k +1} , \BJ _{h, \Bu}\Bu ^{k+1}): \Bv _h] \\[2mm]
		\nonumber&=
		[\CA _1 (\varphi^{k+1}  , \Bu ^{k+1} )-\CA _1 (J_{h, \varphi}\varphi ^{k+1}  , \BJ_{h, \Bu}\Bu^{k+1} ): \Bv _h]\\[2mm]&
		\nonumber
+\skp{\partial_{\tau \tau} ^k\left( \Bu-  \BJ _{h,\Bu}\Bu\right),\,\Bv_h }	+\rho		\skp{ e^k _ {\vartheta,h} \,\BI,\, \nabla \cdot \Bv_h}
	\\[2mm]
	\nonumber
&+\rho \skp{(\CJ _ {h, \vartheta}\vartheta^k-\vartheta^k)\BI , \,\nabla \cdot \Bv_h }		\\[2mm]
&+ \skp{\BBI_h \Bf^k-\Bf^k, \, \Bv _h },&&\hspace{0.3cm}\forall \Bv_h \in \BV _h ,\\[4mm]
		\label{error-phase-field.eq}
		\nonumber \ell \skp{ \nabla e ^{k+1}_ {\varphi,h}   , \nabla w _h }& + \frac{1}{\ell} \skp{ e ^{k+1}_ {\varphi,h},   \,  w _h}  
+  \frac{1}{\mathcal{G}_c}\left(  [ \CA _2 (\varphi^ {k+1}  _h  , \Bu ^k _h)-\CA _2 (\CJ _{h, \varphi}\varphi ^ {k+1} , \BJ _{h, \Bu}\Bu ^k ): w_h]\right.  \\[2mm]&
		\nonumber\left.  -[  \CA _2 (\varphi ^ {k+1},\Bu^k  )-\CA _2 (\CJ _{h,\varphi} \varphi ^ {k+1}  , \BJ _{h, \Bu}\Bu ^k ) : w_h]\right)\\[2mm]
		\nonumber&{+{\gamma_0}\skp{[\varphi_h^{k+1}-\varphi_h ^{k}]_+-[\CJ_{h, \varphi}\varphi^{k+1}-\CJ_{h, \varphi}\varphi ^{k}]_+ ,w_h }}\\
		\nonumber
		&  ={\ell}\skp{ {\nabla (\varphi ^{k+1}-\CJ_{h, \varphi} \varphi^{k+1} ) }, \nabla w_h} 	\\[2mm]
		\nonumber
		&+\frac{1}{\ell} \skp{ {\varphi ^{k+1}-\CJ_{h, \varphi} \varphi ^{k+1} }, w_h}
		\\[2mm]
		\nonumber&{+{\gamma_0}\skp{[\varphi^{k+1}-\varphi ^{k}]_+-[ \CJ_{h, \varphi}\varphi^{k+1}-\CJ_{h, \varphi}\varphi ^{k}]_+ ,w_h }}
		, &&\,\,\forall w _h\in W _h,
		\\[4mm]
		\skp{\delta ^k _\tau e  _ {\vartheta,h}, \, z_h} & + \skp{\left( K(\vartheta _h^ {k+1})\,\nabla \vartheta_h ^ {k+1}-K(\CJ_{h, \vartheta}\vartheta ^ {k+1})\,\nabla \CJ_{h,\vartheta}\vartheta ^ {k+1}  \right),\, \nabla z_h}\\[2mm]&
	\label{error-heat.eq}
	\nonumber
+\rho \, [\CA _3 (\vartheta _h ^ k, \delta_\tau ^ k \Bu_h)-\CA _3 (\vartheta ^k, \delta_\tau ^ k \Bu): z_h]=\skp{{\delta _\tau ^k(\vartheta- \CJ_{h, \vartheta } \vartheta)}, \, z_h}\\[2mm]&
\nonumber+ \skp{\left( K(\vartheta ^ {k+1})\,\nabla \vartheta ^ {k+1}-K(\CJ_{h, \vartheta}\vartheta ^ {k+1})\,\nabla \CJ_{h,\vartheta}\vartheta ^ {k+1}  \right),\, \nabla z_h}
\\[2mm]&+ \skp{ \left( \CI _h \gamma  ^ {k}- \gamma  ^ {k}\right), \, z _h} 
 &&\hspace{0.3cm}\forall z_h \in Z_h.
 	\end{alignat}

\end{subequations}
{Thought the proof, we  consider $C$ as a generic constant independent of $h$, $\tau$, $\ell$, and $\kappa$, and may change from line to line.}	In order to simplify the proof, we split the procedure into four main  steps:\\
	{\bf Step 1:}  \textbf{The displacement error equation}.
	 In this step, we deal with the terms that appear in \eqref{error-elasticity.eq}. For the sake of simplicity,  this step is divided into {three} sub steps.\\	
%
	{\bf Step 1.1:}
We use the test function $\Bv _h=\delta _ \tau ^ {k}\Be _ {\Bu,h} $
in the variational formulation \eqref{error-elasticity.eq}. 
We start with rewriting  the operator  $\CA _1$ as follows
	\begin{align}
\nonumber
&[\CA _1 (\varphi ^ {k+1   } _h , \Bu^ {k+1   } _h)-\CA _1 (\CJ _{h, \varphi}\varphi^ {k+1   }   , \BJ _{h, \Bu}\Bu ^ {k+1   } ): \CE( \delta _ \tau ^k\Be _ {\Bu,h} )] \nonumber \\
&= \skp{(\kappa+(\CJ_{h, \varphi} \varphi ^ {k+1   } )^2)\CA (\CE(  \Be^ {k+1}  _ {\Bu,h} )), \CE(\delta _ \tau ^k\Be _ {\Bu,h})} \nonumber \\
\nonumber
&\quad+ 2 \skp{e ^ {k+1   } _ {\varphi,h}\CJ _{h,\varphi} \, \varphi ^ {k+1   } \, \CA (\CE ( \Bu _h^ {k+1   }))   ,\CE(\delta _ \tau ^k\Be _ {\Bu,h})} \\
\nonumber
&\quad+ \skp{(e ^ {k+1   } _ {\varphi,h})^2 \,\CA (\CE (\BJ _{h, \Bu} \Bu ^ {k+1   })) , \CE(\delta _ \tau ^k\Be _ {\Bu,h})}
\\
\nonumber
&\quad+ \skp{(e ^ {k+1   }_ {\varphi,h})^2 \, \CA (\CE (\Be^ {k+1   } _{\Bu ,h})), \CE(\delta _ \tau ^k\Be _ {\Bu,h}) )}\\
\label{upper bound-A1}
&=: T _1+T _2 +T _3+T_4.
%
%
\end{align}
Using the ellipticity property of the operator $\CA$ from Lemma \ref{Lem-holder contin}, we get
\begin{align}
\nonumber
T_1 &\ge \frac{\kappa \tau C _ {ell, \CA}}{2}\norm{ \CE(\delta _ \tau ^k\Be _ {\Bu,h)}}^2_ {\BL ^2 (\Omega)}+\frac{\kappa  {C_{ell, \CA}}}{2\tau}\left( \norm{\CE (\Be ^ {k+1} _ {\Bu,h})}^2_ {\BL ^2 (\Omega)}-\norm{ \CE(\Be^ {k-1} _ {\Bu,h})}^2_ {\BL ^2 (\Omega)}\right)\\
\label{Proof-eq 2.41}
&\quad+
\frac{1}{4 \tau}\left( \norm{\CJ _ {h, \varphi} \varphi ^ {k+1}\CA^ {1/2}(\CE (\Be ^ {k+1} _ {\Bu,h}))}^2_ {\BL ^2 (\Omega)}-\norm{\CJ _ {h, \varphi} \varphi ^ {k+1}\CA^ {1/2}(\CE (\Be ^ {k-1} _ {\Bu,h}))}^2_ {\BL ^2 (\Omega)}\right),
\end{align}
where $C _ {ell,\CA}$ is the ellipticity constant of $\CA$.
In order to find an upper bound for $T_2$, we apply the   the H\"older inequality \cite[Prop. II.2.18]{MR2986590},    the Sobolev embedding
	  $W^ {1,4} (\Omega ) \subset L^\infty (\Omega) $ from \cite[Thm. 1.20]{MR3014456}, and  we note that  $ \Bu_h ^ {k+1}$ is indeed  in  $  \mathbf S_0^ {1,1}(\T_h) $ (which results in  $\abs{\CE(\Bu _h ^{k+1})}_ {1,4,T}=0$) to  get
{	\begin{align}
\nonumber
\frac{1}{2}\abs{T _2} &\le \norm{e ^ {k+1   }_ {\varphi,h}}_ {L^2 (\Omega)} \norm{\CA^ {1/2} (\CE ( \Bu_h ^ {k+1   } ))}_ {\BL ^ \infty (\Omega)}\norm{\CJ _ {h , \varphi}\varphi ^ {k+1   } \CA^ {1/2}(\CE(\delta _\tau ^k\Be _ {\Bu,h} ))}_ {\BL^2 (\Omega)}\\[2mm]
\nonumber
& \le \norm{e ^ {k+1   }_ {\varphi,h}}_ {L^2 (\Omega)} \sum_{T \in \T_h}\norm{\CA (\CE ( \Bu_h ^ {k+1   } ))}_ {\BL^4 (T)}\norm{\CJ _ {h , \varphi}\varphi ^ {k+1   }\CE(\delta _\tau ^k\Be _ {\Bu,h} )}_ {\BL^2 (\Omega)}
\\
\nonumber
&\le 2 \ell \norm{\CA^ {1/2} (\CE ( \Bu_h ^ {k+1   } ))}^2_ {\BL ^2 (\Omega)} \norm{\CJ _ {h , \varphi}\varphi ^ {k+1   }\CA^ {1/2} (\CE(\delta _\tau ^k \Be  _ {\Bu,h}) )}^2_ {\BL ^2 (\Omega)}\\
\label{Proof-eq 2.42} 
&\quad+\frac{1}{8 \ell} \norm{e ^ {k+1   }_ {\varphi,h} }^2_ {L^ 2 (\Omega)},
\end{align}}
 where the last line of \eqref{Proof-eq 2.42} is estimated by using  \eqref{Gaglias}, and again noting  that $ \Bu_h ^ {k+1}|_T \in \BP_1(T)$ for every $T \in \T_h$ (which results in $\abs{\CE (\Bu^{k+1}_h)}_{\BH^1(T)}=0$), and by Young's inequality.
Then,  from  the a priori estimate from Lemma \ref{Lem-a- priori estimate-discrete solution}, we exploit that 
{ \begin{align}
\abs{T_2}&\le 
\label{Proof-eq 2.43} 
\nonumber
4 \ell C^ \prime  _ {st, \Bu, \CA}\left( \frac{\rho ^2\tau^2 }{ \kappa } \widehat {\mathcal{L}}_{1,k+1}+ \frac{\tau ^2}{{\kappa}}
\sum_{m=0}^{k}\norm{ \Bf^m}_{\BL ^2 (\Omega)}^2\right) \norm{\CJ _ {h , \varphi}\varphi ^ {k+1   }\CA^ {1/2} (\CE(\delta _\tau ^k \Be _ {\Bu,h} ))}^2_ {\BL ^2 (\Omega)}\\
& \quad + \frac{1}{4 \ell}\norm{e^ {k+1   } _{\varphi,h}}^2_ {L ^2 (\Omega)}.
\end{align}}
To control the term $T_3$, we use  
	 the H\"older inequality \cite[Prop. II.2.18]{MR2986590},  the Sobolev embedding  $W^ {1,4} (\Omega ) \subset L^\infty (\Omega) $, and take into account  that $\BJ _ {h, \Bu} \Bu ^ {k+1}|_T \in \boldsymbol \BP  _1(T) $ for all $T \in \T _h$, and apply \eqref{Gaglias}  with Young's inequality to get
	\begin{align}
		\nonumber
	\abs{	T_3}&\le \norm{ e ^ {k+1} _ {\varphi,h}\CA ^ {1/2} \left( \CE(\delta _ \tau ^k\Be _ {\Bu,h}) \right) }_{\BL^2 (\Omega)}\norm{ e ^ {k+1} _ {\varphi,h}}_{L^2 (\Omega)}\norm{\CA ^ {1/2} (\CE (\BJ _{h, \Bu} \Bu ^ {k+1} )) }_{\BL^\infty (\Omega)}
		\\& \nonumber
		\le \norm{ e ^ {k+1} _ {\varphi,h}\CA ^ {1/2}(\CE(\delta _ \tau ^k\Be _ {\Bu,h})) }_{\BL^2 (\Omega)}\norm{ e ^ {k+1} _ {\varphi,h}}_{L^2 (\Omega)}\sum_{T \in \T_h}\norm{\CA ^ {1/2} (\CE (\BJ _{h, \Bu} \Bu ^ {k+1} )) }_{\BL^4 ( T)}
%
		\\
		&\nonumber\le 2 \ell\norm{\CA ^ {1/2} (\CE (\BJ _{h,\Bu} \Bu ^ {k+1}))-\CA ^ {1/2}(\CE (\Bu ^ {k+1} ))}^2_ {\BL ^2 (\Omega)} \norm{e ^ {k+1}_{\varphi,h}\,\CA ^ {1/2}(\CE(\delta _ \tau ^k\Be _ {\Bu,h}))}^2_ {\BL ^2 (\Omega)}\\
		\nonumber
		&\quad +2 \ell \norm{\CA ^ {1/2}(\CE (\Bu  ^ {k+1}))}^2_ {\BL ^2 (\Omega)} \norm{e^ {k+1}_{\varphi,h}\,\CA ^ {1/2}(\CE(\delta _ \tau ^k\Be _ {\Bu,h}))}^2_ {\BL ^2 (\Omega)}+\frac{1}{8 \ell} \norm{e^ {k+1}_{\varphi,h}}^2_ {L ^2 (\Omega)}.
	\end{align}
Applying 
	the a priori estimate from Lemma \ref{Lem-a- priori estimate in A-norm}, and  stability properties of the operators $\BJ _{h, \Bu}$, lead to
	\begin{align}
	\label{Proof-eq 2.44}
	\nonumber 
\abs{T _3} &\le 	4 \ell C _ {st, \Bu, \CA} \left( \frac{\rho ^2\tau^2 }{ \kappa } \widehat {\mathcal{L}}_{1,k+1}+ \frac{\tau ^2}{{\kappa}}
	\sum_{m=0}^{k}\norm{ \Bf^k}_{\BL ^2 (\Omega)}^2\right) \norm{e^ {k+1} _ { \varphi,h}\CA ^ {1/2}(\CE(\delta _\tau ^k \Be  _ {\Bu,h} ))}^2_ {\BL ^2 (\Omega)}\\
	& \quad + \frac{1}{4 \ell}\norm{e^ {k+1   } _{\varphi,h}}^2_ {L ^2 (\Omega)}.
	\end{align}	
Finally,  the term $ {T} _4$ can be rewritten in the following form 
	\begin{align}
	\nonumber
		\label{Proof-eq 2.48a} 
		{T} _4&= \frac{\lambda}{\tau}\norm{e  ^ {k+1}_ {\varphi ,h} \,\nabla \cdot \delta_\tau ^ k\Be  _ {\Bu,h} }^2_ {L ^2 (\Omega)}+ \frac{\mu}{\tau}	\norm{e ^ {k+1} _ {\varphi ,h}  \CE (\delta^k_\tau \Be _ {\Bu,h} )}^2_{\BL ^2(\Omega)}\\&
		\quad +\frac{\lambda}{\tau}\skp{e  ^ {k+1}_ {\varphi ,h} \,\nabla \cdot \Be ^ {k-1} _ {\Bu,h}, \delta_\tau ^ k\Be  _ {\Bu,h} }+ \frac{\mu}{\tau}	\skp{e ^ {k+1} _ {\varphi ,h}  \CE (\Be ^ {k-1} _ {\Bu,h} ),\delta_\tau ^ k\Be  _ {\Bu,h}} .
	\end{align}
	{\bf Step 1.2:} In this step, we deal with the first term on the right-hand side of \eqref{error-elasticity.eq}.
	Applying the test function  $\Bv _h=\delta_\tau ^k\Be _ {\Bu,h} $, we have 
	\begin{align}\nonumber
	&[  \CA _1 (\varphi ^ {k+1}  , \Bu ^ {k+1} )-\CA _1 (\CJ_{h, \varphi}\varphi ^ {k+1}  , \BJ_{h,\Bu}\Bu^ {k+1}  ) : \CE(\delta _\tau ^k\Be _ {\Bu,h} )]\\
&=\kappa\int_{\Omega}\left(\CA(\CE (\Bu^ {k+1} ))-\CA (\CE (\BJ_{h,\Bu} \Bu ^ {k+1} )) \right)  : \CE(\delta ^k _\tau \Be _ {\Bu,h} ) \,dx 
\nonumber	\\& \nonumber +  \int_{\Omega}\left(   (\varphi ^ {k+1} )^2- (\CJ_{h, \varphi} \varphi ^ {k+1} )^2\right)\CA(\CE (\Bu ^ {k+1} ))  : \CE(\delta ^ k _ \tau\Be _ {\Bu,h} ) \,dx\\
&\nonumber
	+\int_{\Omega}   (\CJ _ {h, \varphi} \varphi ^ {k+1} )^2\left(\CA(\CE (\Bu ^ {k+1} ))-\CA (\CE (\BJ_{h,\Bu} \Bu ^ {k+1} )) \right)  : \CE( \delta_\tau ^k\Be  _ {\Bu,h} ) \,dx
	\\
	&
	\label{Proof-main-step3-eq1}
	:=T_5+T_6+T_7.
	\end{align}
It follows from the Lipschitz continuity of the operator $\CA$ from Lemma \ref{Lem-holder contin} and  approximation properties of 
	the operator  $\BJ_{h, \Bu}$ that
 	\begin{align}
	\nonumber
\abs{	T_5} & \le \frac{4\kappa C _ {\mu,\lambda}}{\tau C_ {ell, \CA}} \norm{\CE (\Bu ^ {k+1} )-\CE (\BJ_{h, \Bu} \Bu ^ {k+1} )}_{\BL^2(\Omega)}^2+\frac{\kappa \tau C_{ell, \CA}}{8}\norm{\CE(\delta_\tau ^k \Be _ {\Bu,h} ) }_{\BL^2(\Omega)}^2
	\\&\label{Proof-eq 2.55lm}
	\le \frac{4\kappa h^2 C C _ {\mu,\lambda}}{C_ {ell, \CA}\tau } \norm{\CE (\Bu ^ {k+1} )}_{\BH^1(\Omega)}^2+\frac{\kappa \tau C_{ell, \CA}}{8}\norm{\CE(\delta_\tau ^k \Be _ {\Bu,h} ) }_{\BL^2(\Omega)}^2.
	\end{align}	
	Applying the H\"older inequality \cite[Prop. II.2.18]{MR2986590},   the Sobolev embedding  $W^ {1,4} (\Omega ) \subset L^\infty (\Omega) $, as well as taking into account that for $T\in \T_h$, we have $\Be ^k _ {\Bu,h} |_ T \in \BP_1(T)$ which results in  $\abs{\CE (\delta ^k _ \tau \Be _ {\Bu,h})}_{1,4,T}=0$,  applying  \eqref{Gaglias}, and  employing 
	the approximation properties of $\CJ_{h,\varphi}$, we deduce that
	\begin{align}
	\nonumber
\abs{	T_6} &\le \norm{(\varphi  ^{k+1})^2 - (\CJ _{h, \varphi} \varphi  ^{k+1})^2 }_ {L^2(\Omega)}  \norm{ \CA (\CE ( \Bu    ^{k+1} ))}_{\BL ^2(\Omega)}\sum_{T \in \T_h}\norm{\CE(\delta _\tau ^k \Be _ {\Bu,h} ) } _ {\BL^4( T)}\\
	\nonumber
	&\nonumber\le C \norm{\varphi ^{k+1}- \CJ _{h, \varphi} \varphi ^{k+1}}_ {L^2(\Omega)} \norm{ \CA (\CE (\Bu ^{k+1} ))}_{\BL ^2 (\Omega)}\norm{\CE(\delta _ \tau ^k\Be _ {\Bu,h} ) } _ {\BL ^2 (\Omega)}\\
	& \nonumber\le 4CC^{-1} _{ell, \CA}\kappa ^ {-1}h^2\tau ^{-1}\norm{\CA( \CE (\Bu^{k+1}  ))}^2_ {\BL ^2 (\Omega)}\norm{\varphi ^{k+1}} ^2 _ {H^1 (\Omega)}\\& \label{Proof-eq 2.55} 
	\quad +\frac{\kappa C _ {ell, \CA}}{8\tau}\left( \norm{\CE (\Be ^ {k+1} _ {\Bu,h})}^2_ {\BL ^2 (\Omega)}+\norm{ \CE(\Be^ {k-1} _ {\Bu,h})}^2_ {\BL ^2 (\Omega)}\right),
	\end{align}
	Using the Sobolev embedding   $W^ {1,4} (\Omega ) \subset L^\infty (\Omega) $  as well as taking into account that for all $T\in \T_h$, $\CJ _{h, \varphi}\varphi ^ {k+1}(T)$ is in fact a linear polynomial, using \eqref{Gaglias}, applying  
		Lemma \ref{Lem-holder contin} and the approximation properties of 
	the operator  $\BJ_{h, \Bu}$, the following estimate can be obtained  similar to \eqref{Proof-eq 2.55lm}
 {	\begin{align}
	\nonumber 
\abs{T_7} &\le 
 \norm{\CJ _{h, \varphi}\varphi ^ {k+1} }_ {L^ \infty(\Omega)}\abs{\skp{\CA ^ {1/2}(\CE (\Bu ^ {k+1})-\CE (\BJ_{h, \Bu}\Bu ^ {k+1})),\CJ _{h, \varphi}\varphi ^ {k+1}\CA ^{1/2}(\CE(\delta_\tau ^k\Be _ {\Bu,h} )) }}\\&
		\label{Proof-eq 2.56}
			\le {4 CC_ {\mu,\lambda}\tau ^ {-1}h^2 } \norm{\CJ _{h, \varphi}\varphi ^ {k+1} }_ {H^1 (\Omega)} ^2  \norm{\CE (\Bu ^ {k+1} )}_{\BH^1(\Omega)}^2+\frac{\tau}{8}\norm{\CJ _{h, \varphi}\varphi ^ {k+1}\CA ^ {1/2}(\CE(\delta_\tau ^k \Be _ {\Bu,h} )) }_{\BL^2(\Omega)}^2.
	\end{align}}
By combining \eqref{Proof-main-step3-eq1}-\eqref{Proof-eq 2.56}, we obtain immediately that
	\begin{align}
	\nonumber 
	\sum _ {i=5}^7 \abs{T_i}
&
	\le \frac{4\kappa h^2 C_\Bu C _ {\mu,\lambda}}{C_ {ell, \CA}\tau } \norm{\CE (\Bu ^ {k+1} )}_{\BH^1(\Omega)}^2\\& \nonumber 
	\quad +{4 CC_ {\mu,\lambda}\tau ^ {-1}h^2 } \norm{\CJ _{h, \varphi}\varphi ^ {k+1} }_ {H^1 (\Omega)} ^2  \norm{\CE (\Bu ^ {k+1} )}_{\BH^1(\Omega)}^2\\
	& \nonumber 
\quad	+4CC^{-1} _{ell, \CA}\kappa ^ {-1}h^2\tau ^{-1}\norm{\CA( \CE (\Bu^{k+1}  ))}^2_ {\BL ^2 (\Omega)}\norm{\varphi ^{k+1}} ^2 _ {H^1 (\Omega)}
	\\&
	\nonumber
	\quad +	\frac{\kappa C _ {ell, \CA}}{4\tau}\left( \norm{\CE (\Be ^ {k+1} _ {\Bu,h})}^2_ {\BL ^2 (\Omega)}+\norm{ \CE(\Be^ {k-1} _ {\Bu,h})}^2_ {\BL ^2 (\Omega)}\right)\\
	&
\quad
 \label{proof-main-step3}+\frac{\tau}{8}\norm{\CJ _{h, \varphi}\varphi ^ {k+1}\CA ^ {1/2}(\CE(\delta_\tau ^k \Be _ {\Bu,h} )) }_{\BL^2(\Omega)}^2.
	\end{align}
\\
{{\bf Step 1.3:} In this step, after substituting $\Bv _h=\delta _ \tau ^ {k}\Be _ {\Bu,h} $ as the test function, we focus on the rest of the term in 
in the variational formulation \eqref{error-elasticity.eq}.} Recalling the approximation properties of $\BJ_{h, \Bu}$ and Young's inequality, the following upper bound is
valid
\begin{align}
\nonumber
T_5:=\skp{\partial_{\tau \tau} ^k\left( \Bu-  \BJ _{h,\Bu}\Bu\right),\delta _ \tau ^ {k}\Be _ {\Bu,h} } &\le C h ^2 \norm{\partial_{\tau \tau} ^k \Bu}^2_{\BH^1 (\Omega)}\\& 
\label{EQ-K1}
\quad
+\frac{1}{8}\left(\norm {\partial _ \tau ^ {k} \Be _ {\Bu,h}  }^2_ {\BL^2 (\Omega)}\right. +\left. \norm {\partial _ \tau ^ {k-1}\Be _ {\Bu,h}}^2_ {\BL^2 (\Omega)}\right),
\end{align}
 Integration by parts  and Young's inequality lead to
 \begin{align}
 \nonumber
T_8&:=\rho		\skp{ e^k _ {\vartheta,h} \,\BI,\, \nabla \cdot \delta _ \tau ^ {k}\Be _ {\Bu,h}}  = \rho \skp{ \nabla e^k _ {\vartheta,h} \,\BI,\, \delta _ \tau ^ {k}\Be _ {\Bu,h}} \\
& \label{EQ-K2}
\le \frac{c_0 \rho}{2}\norm{\nabla e^k _ {\vartheta,h} }^2_ {\BL^2(\Omega)}
+\frac{\rho}{8c_0 }\left(\norm {\partial _ \tau ^ {k} \Be _ {\Bu,h}  }^2_ {\BL^2 (\Omega)}\right. +\left. \norm {\partial _ \tau ^ {k-1}\Be _ {\Bu,h}}^2_ {\BL^2 (\Omega)}\right).
 \end{align}
Applying Korn’s inequality \cite{horgan1983inequalities}, Young's inequality, in addition to the approximation property of $\CJ_{h , \vartheta}$, we obtain
\begin{align}
\nonumber
T_{10}&:=\rho \skp{(\CJ _ {h, \vartheta}\vartheta^k-\vartheta^k)\BI , \,\nabla \cdot \delta _ \tau ^ {k}\Be _ {\Bu,h} } 
\le \frac{CC^2_K\rho ^2 h^2}{\kappa \tau}\norm{\vartheta ^k}^2_ {H^1 (\Omega)}
 \\&\quad
 \label{EQ_K3}
 	+\frac{\kappa \tau C_{ell, \CA}}{8}\norm{\CE(\delta_\tau ^k \Be _ {\Bu,h} ) }_{\BL^2(\Omega)}^2.
\end{align}
On the other hand,	using the approximation property of $\BBI _h$,  there holds the following estimate
\begin{align}
\nonumber
T_{11}&:=\skp{\BBI _h \Bf ^ {k+1} - \Bf^ {k+1},\delta ^k _\tau \Be _ {\Bu ,h}}\le \frac{1}{2}\,\norm{\BBI _h \Bf^ {k+1} - \Bf^ {k+1}}^2_ {\BL^2 (\Omega)}+\frac{1 }{2}\norm{\delta ^k _\tau \Be _ {\Bu ,h}}^2 _ {\BL^2 (\Omega)}
\\&
\label{Proof-eq 2.63} 
\le C\,h^2 \norm{\Bf^ {k+1}}^2_{\BH ^1 (\Omega)}
+\frac{1}{8}\left(\norm {\partial _ \tau ^ {k} \Be _ {\Bu,h}  }^2_ {\BL^2 (\Omega)}\right. +\left. \norm {\partial _ \tau ^ {k-1}\Be _ {\Bu,h}}^2_ {\BL^2 (\Omega)}\right).
\end{align}
	{\bf Step 2:} \textbf{The elliptic error equation} In this step, we are concerned with the terms appearing in \eqref{error-phase-field.eq}. This step is separated into {four} sub-steps.\\
	{\bf Step 2.1:}
	We set   $w _h=e  ^ {k+1}_ {\varphi,h} $ as the test function in the variational formulation \eqref{error-phase-field.eq}, and  control the operator   $\CA _2$. 
		We note that
				\begin{align}
		\label{difference-A2}
		\nonumber \CA _2 (\varphi  ^ {k+1} _h , \Bu ^ {k} _h)-\CA _2 (\CJ _{h, \varphi}\varphi   ^ {k+1}  , \BJ _{h, \Bu}  ^ {k}\Bu )&=e  ^ {k+1}_ {\varphi ,h}  \left( \mathcal B (\CE ( \Bu   ^ {k}_h))-\mathcal B (\CE (\BJ _{h, \Bu} \Bu  ^ {k} )) \right)\\
		&\quad \nonumber+e^ {k+1} _ { \varphi, h  }\mathcal B (\CE (\BJ _ {h, \Bu}\Bu ^k )) \\
		&\quad
		+\CJ _{h, \varphi}\varphi ^ {k+1} \left(\mathcal B (\CE ( \Bu _h ^ {k}  ))-\mathcal B (\CE (\BJ _ {h, \Bu}\Bu   ^ {k}  ))\right).
		\end{align}
		Applying  the mean value theorem, the first term on the right-hand side of \eqref{upper bound-A1}  can be written in the following form
		\begin{align}
			\label{T1-A2}
			\nonumber\mathcal B (\CE ( \Bu ^k  _h))&-\mathcal B (\CE (\BJ _{h, \Bu} \Bu  ^k ))=\int_0 ^1\left(  \frac{\partial}{\partial \CE}  \mathcal{B} (\Bc)| _ {\Bc=\CE(\BJ _{h, \Bu}\Bu  ^ {k} )+ \widetilde{\rho}(\CE (\Be  ^ {k}_ {\Bu,h} ))}\right) d \widetilde{\rho}\, :\CE (\Be  ^ {k} _ {\Bu,h} )\\
			\nonumber &=\lambda  \left( \operatorname{tr} (\CE(\BJ _{h, \Bu}\Bu ^k )) \BI : \CE (\Be ^ {k} _ {\Bu,h} ) \right) +\mu  \left(\CE(\BJ _{h, \Bu}\Bu  ^ {k}):\CE (\Be  ^ {k}_ {\Bu,h} ) \right) \\
			\nonumber
			 & \quad +\frac{\lambda}{2}  \left( \operatorname{tr} (\CE (\Be  ^ {k}_ {\Bu,h} )) \BI : \CE (\Be  ^ {k}_ {\Bu,h} ) \right) + \frac{\mu}{2}  \left(\CE (\Be   ^ {k}_ {\Bu,h} ):\CE (\Be  ^ {k} _ {\Bu,h} ) \right) \\
			 &=(\CA (\CE (\BJ _ {h, \Bu}\Bu ^k)): \CE (\Be^k_{\Bu,h}))+\frac{1}{2}(\CA (\CE (\Be^k _ {h, \Bu})): \CE (\Be^k_{\Bu,h})).
	\end{align}
		Substituting \eqref{T1-A2} into \eqref{difference-A2}, we arrive at
		\begin{align}
		\nonumber
			[\CA _2 (\varphi  ^ {k+1} _h , \Bu  ^ {k}_h)&-\CA _2 (\CJ _{h,\Bu}\varphi ^ {k+1}   , \BJ  _{h,\Bu}\Bu ^ {k} ): e ^ {k+1} _ {\varphi ,h} ]\\
			& \nonumber =	\skp{e  ^ {k+1} _ {\varphi ,h}  {\left( \mathcal A (\CE (\BJ  _{h,\Bu} \Bu  ^ {k} ):\CE (\BJ _ {h,\Bu} \Bu  ^ {k} )\right) ),e  ^ {k+1} _ {\varphi ,h} }}\\& 
			\label{Proof-eq 2.47} \quad+  {\widehat{T}_1+ \widehat{T}_2
			+ \widehat{T}_3+ \widehat{T}_4} ,
		\end{align}
		where 
		\begin{align}
			\nonumber \widehat{T} _1& := \frac{\lambda}{2}  \int _ \Omega \left( \left( \operatorname{tr} (\CE (\Be  ^ {k} _ {\Bu,h} )) \BI : \CE (\Be  ^ {k} _ {\Bu,h} ) \right) + \frac{\mu}{2}  \left(\CE (\Be  ^ {k} _ {\Bu,h} ):\CE (\Be  ^ {k} _ {\Bu,h} ) \right) \right)  (e  ^ {k+1} _ {\varphi ,h} )^2 \, dx,\\
			\nonumber \widehat{T}_2&:=\skp{ e  ^ {k+1} _ {\varphi ,h}  \left( \CA (\CE(\BJ _{h, \Bu}\Bu  ^ {k}) : \CE (\Be  ^ {k}_ {\Bu,h} ) \right),e ^ {k+1} _ {\varphi ,h}},\\
			\nonumber \widehat{T}_3&:=\skp{\CJ  _{h, \varphi}\varphi ^ {k+1}  \left( \CA (\CE(\Be^k _{h, \Bu}) : \CE (\Be  ^ {k}_ {\Bu,h} )\right) ,e ^ {k+1} _ {\varphi ,h} },\\
				\nonumber \widehat{T}_4&:=\skp{\CJ  _{h, \varphi}\varphi ^ {k+1}  \left( \CA (\CE(\BJ _{h, \Bu}\Bu  ^ {k}) : \CE (\Be  ^ {k}_ {\Bu,h} )\right) ,e ^ {k+1} _ {\varphi ,h} }.
		\end{align}
		The term $ \widehat{T} _1$ can be rewritten in the following form 
		\begin{align}
			\label{Proof-eq 2.48} 
			\widehat{T} _1= \frac{1}{2}	\norm{e ^ {k+1}  _ {\varphi ,h} \CA ^ {1/2}  (\CE (\Be ^k _ {\Bu,h} ))}^2_{\BL ^2(\Omega)}. 
		\end{align}	
We note that
		\begin{align}
			\nonumber \widehat{T}_2=&\skp{e  ^ {k+1} _ {\varphi ,h} {\left( \mathcal A (\CE (\BJ _{h,\Bu} \Bu  ^ {k} ):\CE ( \Bu_h  ^ {k} )\right) ,e  ^ {k+1} _ {\varphi ,h} }}\\
			\nonumber
			& \quad-\skp{e  ^ {k+1} _ {\varphi ,h} {\left( \mathcal A (\CE (\BJ _{h,\Bu} \Bu  ^ {k} ):\CE (\BJ _{h,\Bu}\Bu  ^ {k} )\right) ,e  ^ {k+1} _ {\varphi ,h} }}\\
			\label{Proof-eq 2.49a} 
		=:&	 \widehat T_ {2,1}-\skp{e  ^ {k+1} _ {\varphi ,h} {\left( \mathcal A (\CE (\BJ _{h,\Bu}\Bu  ^ {k} ):\CE (\BJ _{h,\Bu} \Bu  ^ {k} )\right) ,e  ^ {k+1} _ {\varphi ,h} }}.
		\end{align}
			Applying the Cauchy-Schwarz and Young inequalities as well as 
		the H\"older inequality \cite[Prop. II.2.18]{MR2986590},  the Sobolev embedding  $W^ {1,4} (\Omega ) \subset L^\infty (\Omega)$, and considering that $\BJ _ {h, \Bu} \Bu ^ {k+1}$ and $\Bu _h ^k$ are piecewise linear functions,  the estimate \eqref{Gaglias}, the stability property of $\BJ_ {h, \Bu}$, and Lemmas \ref{Lem-a- priori estimate in A-norm} and \ref{Lem-a- priori estimate-discrete solution} lead to
		\begin{align}
		\nonumber
	\widehat	T_ {2,1}	&
					\nonumber\le\norm{\CA(\CE(\BJ _{h, \Bu}\Bu ^k))}_{\BL ^\infty (\Omega)}\norm{\CE (\Bu ^k _ {h} ) }_{\BL ^\infty(\Omega)}\norm{e^ {k+1} _ {\varphi ,h}  }^2_{L ^2(\Omega)}
					\\& 
					\nonumber\le \left( \sum_{T \in \T_h}\norm{\CA(\CE(\BJ _{h, \Bu}\Bu ^k))}_{\BL^4 (T)}+\sum_{T \in \T_h}\norm{\CE(\Bu_h ^k)}_{\BL^4 ( T)}\right)\norm{e^ {k+1} _ {\varphi ,h}  }^2_{L ^2(\Omega)}\\
					\nonumber
					& \le  \left( \norm{\CA(\CE(\BJ _{h, \Bu}\Bu ^k))}_{\BL ^2 (\Omega)}+\norm{\CE(\Bu_h ^k)}_{\BL ^2 (\Omega)}\right)\norm{e^ {k+1} _ {\varphi ,h}  }^2_{L ^2(\Omega)}\\
					&
				\nonumber
				\le 	\left({C _ {st, \Bu, \CA}} \left( \frac{\rho ^2\tau^2 }{ \kappa } \widehat {\mathcal{L}}_{1,k}+ \frac{\tau ^2}{{\kappa}}
					\sum_{m=0}^{k-2}\norm{ \Bf^m}_{\BL ^2 (\Omega)}^2\right)\right.\\
						\label{Proof-eq 2.49} 
					& \left.\quad \quad\quad+\sqrt{ C^ \prime  _ {st, \CE\Bu} }\left( \frac{\rho \tau }{ \alpha _ \kappa }\sqrt{ \mathcal{L}_{1,k}}+ \frac{\tau}{\sqrt{2\alpha _{\kappa}}}
					\sum_{m=0}^{k-2}\norm{\Bf^m}_{\BL ^2 (\Omega)}\right) \right) \norm{e^ {k+1} _ {\varphi ,h}  }^2_{L ^2(\Omega)}.
		\end{align}
			With the same arguments, we obtain that 
		\begin{align}
		\nonumber
\abs{\widehat T_4} &\le \norm{\CA^{1/2}(\CE(\BJ _{h, \Bu}\Bu ^k))}_{\BL ^2 (\Omega)}\norm{e^ {k+1} _ {\varphi ,h}  }_{L ^2(\Omega)}\norm{\CJ _ {h, \varphi}\varphi ^ {k+1   } \CA ^ {1/2}  (\CE ( \Be_{\Bu,h} ^ {k  } ))}_ {\BL^2 (\Omega)}\\
\nonumber
&
\nonumber 
\le {C _ {st, \Bu, \CA}} \left( \frac{\rho ^2\tau^2 }{ \kappa } \widehat {\mathcal{L}}_{1,k}+ \frac{\tau ^2}{{\kappa}}
\sum_{m=0}^{k-2}\norm{ \Bf^m}_{\BL ^2 (\Omega)}^2\right)\norm{e^ {k+1} _ {\varphi ,h}  }^2_{L ^2(\Omega)}\\
&
\quad+\norm{\CJ _ {h, \varphi}\varphi ^ {k+1   } \CA ^ {1/2}  (\CE ( \Be_{\Bu,h} ^ {k  } ))}_ {\BL^2 (\Omega)}^2.
		\end{align}
It follows from the    Cauchy-Schwarz  and Young inequalities that
		\begin{align}
			\nonumber \abs{\widehat{T}_3}&\le  \norm{\CJ _ {h , \varphi}\varphi ^ {k+1   } \CA ^ {1/2} (\CE ( \Be_{\Bu,h} ^ {k  } ))}_ {\BL ^ 2 (\Omega)}\norm{e  ^ {k+1}_ {\varphi ,h}\CA ^ {1/2}  (\CE(\Be^k _ {\Bu,h} ))}_ {\BL^2 (\Omega)}\\
			\label{Proof-eq 2.49m} 
		& 	 
			\le 4\norm{\CJ _ {h, \varphi}\varphi ^ {k+1   } \CA ^ {1/2}  (\CE ( \Be_{\Bu,h} ^ {k  } ))}^2_ {\BL^2 (\Omega)}+\frac{1}{8}\norm{e ^ {k+1   }_ {\varphi,h} \CA ^ {1/2} (\CE(\Be^k _ {\Bu,h} ))}^2_ {\BL^2 (\Omega)}.
		\end{align}
		{\bf Step 2.2:} 
		Choosing  $w _h=e ^ {k+1}  _ {\varphi,h} $ as the test function in the variational formulation \eqref{error-phase-field.eq}, we have 
		\begin{align}
			\nonumber
			[ \CA _2 (\CJ _{h, \varphi}\varphi ^ {k+1}   , \BJ _{h, \Bu}\Bu ^ {k} ) &-\CA _2 (\varphi ^ {k+1}   , \Bu ^ {k} ) : e ^ {k+1}  _ {\varphi,h} ]
				\\& 
			\nonumber= [ \CA _2 (\varphi ^ {k+1}    , \BJ _{h, \Bu}\Bu ^ {k} ) -\CA _2 (\varphi  ^ {k+1}   , \Bu ^k) : e^ {k+1}  _ {\varphi,h} ]\\&\quad \nonumber
			+[ \CA _2 (\CJ _{h, \varphi}\varphi ^ {k+1}    , \BJ _ {h, \Bu}\Bu ^k) -\CA _2 ( \varphi  ^ {k+1}  ,  \BJ _ {h, \Bu}\Bu  ^k) : e ^ {k+1} _ {\varphi,h} ]\\\label{Proof-eq 2.58} 
			& := \widehat{T}_5+ \widehat{ T}_6.
		\end{align}
		Applying  the mean value theorem  analogous to \eqref{T1-A2}, we have
		\begin{align}
			\nonumber
			\widehat{ T}_5&= \skp{  \CA(\CE(\BJ _{h, \Bu}\Bu^k)):	(\CE(\BJ _{h, \Bu}\Bu ^k)-\CE(\Bu ^k)),\varphi ^ {k+1}\,e ^ {k+1} _ {\varphi,h} }\\&
			\nonumber
			\quad+\skp{\CA(\CE(\BJ _{h, \Bu}\Bu^k))-\CA (\CE (u^k)):	(\CE(\BJ _{h, \Bu}\Bu ^k)-\CE(\Bu ^k)), \varphi ^ {k+1}\,e ^ {k+1} _ {\varphi,h} }\\&
			:= \widehat{T}_ {5,1}+ \widehat{T}_ {5,2}.
		\end{align}		
Employing	the H\"older inequality \cite[Prop. II.2.18]{MR2986590},   the Sobolev embedding  $W^ {1,4} (\Omega ) \subset L^\infty (\Omega)$,  and noting that $\BJ _{h, \Bu}\Bu^k$ and $e^{k+1} _ {\varphi,h}$ are  piecewise linear functions as well as the approximation properties of $\BJ_{h, \Bu}$  and Young's inequality, we arrive at
\begin{align}
\nonumber
\widehat{T}_ {5,1} &\le \norm{\CA(\CE(\BJ _{h, \Bu}\Bu^k))}_ {\BL^ \infty(\Omega)}\norm{\CE(\BJ _{h, \Bu}\Bu^k)-\CE(\Bu^k)}_ {\BL^ 2(\Omega)}\norm{e ^ {k+1}_ {\varphi ,h}} _ {L^\infty (\Omega)}\norm{\varphi^{k+1}}_{L^2(\Omega)}\\[-3mm]&
\label{Proof-eq 2.49abcdef}
\le Ch^2\ell ^ {-1}\norm{\CA(\CE(\BJ _{h, \Bu}\Bu^k))}^2_ {\BL^ 2(\Omega)}\norm{\CE(\Bu^k)}^2_ {\BH^ 1(\Omega)}+\frac{\ell}{8 }\norm{e ^ {k+1}_ {\varphi ,h}}^2_ {H^1 (\Omega)}.
\end{align}
Similar
 to the previous estimate, in addition to applying the Lipschitz continuity of $\CA$ from Lemma \ref{Lem-holder contin}, and using the Sobolev embedding  $W^ {1,4} (\Omega ) \subset L^\infty (\Omega)$, and considering the fact that for  $T \in \T_h$,  ${e ^ {k+1}_ {\varphi ,h} |}_T \in P_1 (T)$, and  the inequality \eqref{Gaglias}, it follows that	
{ \begin{align}
\nonumber
\widehat{T}_ {5,2} &\le \norm{\left( \CA(\CE(\Bu^k))-\CA(\CE(\BJ _{h, \Bu}\Bu^k))\right) :\CE(\BJ _{h, \Bu}\Bu^k)-\CE(\Bu^k)}_ {\BL^ 2(\Omega)}\norm{\varphi^{k+1}}_{L^2(\Omega)}\norm{e ^ {k+1}_ {\varphi ,h}} _ {L^\infty (\Omega)}\\[2mm]&
\nonumber
\le C h \norm{\varphi^{k+1}}_{L^2(\Omega)} \norm{\CE(\Bu^k)}_ {\BH^ 1(\Omega)}\sum_{T \in \T_h}\norm{e ^ {k+1}_ {\varphi ,h}} _ {L^4 ( T)}\\[-4mm]
& \le Ch^2 \ell ^ {-1}  \norm{\varphi^{k+1}}_{L^2(\Omega)}  ^2\norm{\CE(\Bu^k)}^2_ {\BH^ 1(\Omega)}+ \frac{\ell}{8}\norm{e ^ {k+1}_ {\varphi ,h}}^2 _ {H^1 (\Omega)}.
 \end{align}}
The H\"older inequality \cite[Prop. II.2.18]{MR2986590},   the Sobolev embedding  $W^ {1,4} (\Omega ) \subset L^\infty (\Omega)$, and noting that $\BJ _{h, \Bu}\Bu^k$ is a piecewise linear function,  the Gagliardo–Nirenberg inequality \eqref{Gaglias}, as well as using the approximation properties of  $\CJ_{ h, \varphi}$ imply
{	\begin{align}
	\nonumber
	\widehat{ T}_6& \le \sum_{T \in \T_h}\norm{\CA(\CE(\BJ _{h, \Bu}\Bu^k))}_ {\BL^4( T)}\sum_{T \in \T_h}\norm{\CE(\BJ _{h, \Bu}\Bu^k)}_ {\BL^ d( T)} \norm{ \varphi ^ {k+1}- \CJ _h \varphi ^ {k+1} }_ {L^2 (\Omega)}\norm{e ^ {k+1}_ {\varphi ,h}} _ {L^2 (\Omega)}\\
	\label{Proof-eq 2.61}
	&\le Ch^2\ell\norm{\CA(\CE(\BJ _{h, \Bu}\Bu^k))}^2_ {\BL^ 2(\Omega)}\norm{\CE(\BJ _{h, \Bu}\Bu^k)}^2_ {\BL^ 2(\Omega)}  \norm{\varphi  ^ {k+1}}^2_ {H^1 (\Omega)}+ \frac{1}{8 \ell}\norm{e ^ {k+1}_ {\varphi ,h}}^2_ {L^2 (\Omega)},
	\end{align}} 
		Combining \eqref{Proof-eq 2.58}--\eqref{Proof-eq 2.61} results in 
		\begin{align}
		\nonumber
			\abs{[ \CA _2 (\CJ _{h,\varphi}\varphi  ^ {k+1} \right.& \left. , \BJ _{h, \Bu}\Bu^ {k} ) -\CA _2 (\varphi ^ {k+1}  , \Bu ^k) : e^ {k+1} _ {\varphi,h} ]}  
\\& \nonumber\le
 Ch^2\ell ^ {-1}\norm{\CA(\CE(\BJ _{h, \Bu}\Bu^k))}^2_ {\BL^ 2(\Omega)}\norm{\CE(\Bu^k)}^2_ {\BH^ 1(\Omega)}\\& \nonumber
 \quad+Ch^2 \ell ^ {-1}  \norm{\varphi^{k+1}}_{L^2(\Omega)}^2\norm{\CE(\Bu^k)}^2_ {\BH^ 1(\Omega)}\\
\nonumber
 &\quad+ Ch^2\ell\norm{\CA(\CE(\BJ _{h, \Bu}\Bu^k))}^2_ {\BL^ 2(\Omega)}\norm{\CE(\BJ _{h, \Bu}\Bu^k)}^2_ {\BL^ 2(\Omega)}  \norm{\varphi ^ {k+1}}^2_ {H^1 (\Omega)}\\
 \label{Proof-eq 2.60a}
 &\quad
+\frac{1}{8\ell }\norm{e ^ {k+1}_ {\varphi ,h}}^2 _ {L^2 (\Omega)}+\frac{\ell}{4}\norm{e ^ {k+1}_ {\varphi ,h}}^2 _ {H^1 (\Omega)}. 
		\end{align}	
		{\bf Step 2.3:}
		In this step, after using $w_h= e^ {k+1}_ {\varphi,h}$ as test function in \eqref{error-phase-field.eq}, we deal with the rest of the terms on the right hand side of this equation. Considering the definition of the Ritz operator $\CJ_{ h, \varphi}$, the first term on the right hand side of this equation vanishes, and for the second term following Young's inequality and  the approximation properties of this Ritz operator, we get 
	\begin{align}
	\label{Proof-eq 2.60am}
\widehat T _7:=\frac{1}{\ell} \skp{ {\varphi ^{k+1}-\CJ_{h, \varphi} \varphi ^{k+1} }, e^ {k+1}_ {\varphi,h}}\le \frac{Ch^2}{\ell}\norm{\varphi ^ {k+1}}^2_ {H^1 (\Omega)}+\frac{1}{8\ell}\norm{e ^ {k+1}_ {\varphi ,h}}^2 _ {L^2 (\Omega)},
\end{align}	
where $C$ is a nonegative constant independent of $\ell$, $\kappa$, $h$, and $\tau$.
\\
{{\bf Step 2.4: } Defining    $w _h:=e  ^ {k+1}_ {\varphi,h} $ as the test function in the variational formulation \eqref{error-phase-field.eq},  and using \eqref{eq.po} we conclude
\begin{align}\label{step.2.4.1}
	\nonumber
{\gamma_0}&\skp{[\varphi_h^{k+1}-\varphi_h ^{k}]_+-[\CJ_{h, \varphi}\varphi^{k}-\CJ_{h, \varphi}\varphi ^{k+1}]_+,e  ^ {k+1}_ {\varphi,h}  }\\&
\nonumber= {\gamma_0}\norm{[\varphi_h^{k+1}-\varphi_h ^{k}]_+-[\CJ_{h, \varphi}\varphi^{k}-\CJ_{h, \varphi}\varphi ^{k+1}]_+}_ {L^2 (\Omega)}^2\\&
+{\gamma_0}\skp{[\varphi_h^{k+1}-\varphi_h ^{k}]_+-[\CJ_{h, \varphi}\varphi^{k}-\CJ_{h, \varphi}\varphi ^{k+1}]_+,e  ^ {k}_ {\varphi,h}  } :=\widehat{T}_8+\widehat{T}_9.
\end{align}
For the second term in the right and side of the above equation, we can deduce from Young's inequality that
\begin{align}\label{step.2.4.2}
\abs{\widehat{T}_9 }\le \frac{\gamma_0}{4}\norm{[\varphi_h^{k+1}-\varphi_h ^{k}]_+-[\CJ_{h, \varphi}\varphi^{k}-\CJ_{h, \varphi}\varphi ^{k+1}]_+}^2_ {L^2 (\Omega)}+\norm{e  ^ {k}_ {\varphi,h} }^2_ {L^2 (\Omega)}.
\end{align}
Finally, applying \eqref{eq.po2}, following Young's inequality and  the approximation property of the Ritz operator $\CJ_{h, \varphi}$  lead to
\begin{align}
	\nonumber
{\gamma_0}&{\skp{[\varphi^{k+1}-\varphi ^{k}]_+-[ \CJ_{h, \varphi}\varphi^{k+1}-\CJ_{h, \varphi}\varphi ^{k}]_+ ,e  ^ {k+1}_ {\varphi,h}}}\le \gamma_0\norm{\varphi^{k+1}-\CJ_{h, \varphi}\varphi ^{k+1}}_ {L^2 (\Omega)}\norm{e ^ {k+1}_ {\varphi ,h}} _ {L^2 (\Omega)}\\
\label{penalty1}
&+\gamma_0\norm{\varphi^{k}-\CJ_{h, \varphi}\varphi ^{k}}_ {L^2 (\Omega)}\norm{e ^ {k+1}_ {\varphi ,h}} _ {L^2 (\Omega)}\le \frac{Ch^2}{\ell}\left( \norm{\varphi ^ {k}}^2_ {H^1 (\Omega)}+\norm{\varphi ^ {k+1}}^2_ {H^1 (\Omega)}\right) +\frac{1}{8\ell}\norm{e ^ {k+1}_ {\varphi ,h}}^2 _ {L^2 (\Omega)},
\end{align}}
where $C$ is a nonegative constant independent of $\ell$, $\kappa$, $h$, and $\tau$.

	{\bf Step 3:} \textbf{The heat error equation}. In this step, we consider the equation \eqref{error-heat.eq}, to control and simplify the nonlinear terms of this equation.\\
	{\bf Step 3.1:}
	We consider the left hand side of \eqref{error-heat.eq}, and start with substituting  $z_h= e^ {k+1}_ {\vartheta,h}$ as the test function in  \eqref{error-heat.eq}. Then exploiting  the definition of the Ritz operator $\CJ_{h, \vartheta}$ results in
	\begin{align}
	\nonumber 
\left(  K(\vartheta _h^ {k+1})\,\nabla \vartheta_h ^ {k+1}\right. & \left. -K(\CJ_h\vartheta ^ {k+1})\,\nabla \CJ_{h , \vartheta}\vartheta ^ {k+1}, \nabla  e^ {k+1}_ {\vartheta,h}\right)\\& \nonumber\ge c_0 \norm{\nabla e^ {k+1}_ {\vartheta,h}}^2 _ {\BL^2 (\Omega)}+c_0 \norm{(\abs{\vartheta _h}^ {\beta/2})\nabla e^ {k+1}_ {\vartheta,h}}^2 _ {\BL^2 (\Omega)}\\&
\nonumber
\quad+c_0\left( \left(  K(\vartheta _h^ {k+1}) - K(\CJ_h\vartheta ^ {k+1})\right)  \,\nabla \CJ_{h , \vartheta} \vartheta ^ {k+1}, \nabla  e^ {k+1}_ {\vartheta,h}\right)\\&
\label{Proof-eq 2.60ab}
:=H_1+H_2+H_3.
	\end{align}	
Indeed, thanks to the Lipschitz continuity of $K$  from Assumption \ref{assumption-initial data},	applying the H\"older inequality \cite[Prop. II.2.18]{MR2986590}, Cauchy-Schwarz inequality, and the Sobolev embedding $H^1 (\Omega ) \subset L^6 (\Omega) \subset  L^3 (\Omega) $ we conclude
	\begin{align}
\nonumber
	H_3 &\le C_{\mathcal{L}}\left( \norm{\vartheta _h^ {k+1}-\CJ_{h , \vartheta}  \vartheta ^ {k+1}}_{L^2(\Omega)}^2\right) \sum_{T \in \T_h}\norm{\nabla \CJ _{h , \vartheta}  \vartheta ^ {k+1}}^2_ {\BH^1 (T)} + \frac{c_0}{4}\sum_{T \in \T_h}\norm{\nabla e^ {k+1}_ {\vartheta,h}}^2 _ {\BH^1 ( T)}, 
	\end{align}
where $C_{\mathcal{L}}:=c_0\, C_{Lip}$ and $C_{Lip}$	is the Lipschitz continuity constant of $K$.
We   note that  $\CJ _{h , \vartheta}  \vartheta ^ {k+1}$ and $ e^ {k+1}_ {\vartheta,h}$ both belong to the space of  linear piecewise continuous functions, it is then obvious that  $\abs{\nabla \CJ _{h , \vartheta}  \vartheta ^ {k+1}}_{H^1(T)}=0$ and $\abs{\nabla e^ {k+1}_ {\vartheta,h}}_{H^1(T)}=0$ for all $T \in \T _h$. Hence, from this argument, combined with the a priori estimates \eqref{eq-a- priori estimate-semi discrete-heat} and \eqref{eq-a- priori estimate-full discrete-heat} we obtain that
\begin{align}
\nonumber 
	H_3 &\le C_{\mathcal{L}} \left( \norm {   {\vartheta _h^ {k+1}}}^{2}_ {L^2 (\Omega)}+\widehat C_ {st,\vartheta}\norm {   {\vartheta ^ {k+1}}}^{2}_ {L^2 (\Omega)}\right) \norm{\nabla \CJ _{h, \vartheta} \vartheta ^ {k+1}}^2_ {\BL^2 (\Omega)} + \frac{c_0}{4}\norm{\nabla e^ {k+1}_ {\vartheta,h}}^2 _ {\BL^2 (\Omega)}\\[2mm]
	\nonumber
	&\le C_{\mathcal{L}} \widehat C_ {st,\vartheta}\left( \norm {   {\vartheta _h^ {k+1}}}^{2}_ {L^2 (\Omega)}+\widehat C_ {st,\vartheta}\norm {   {\vartheta ^ {k+1}}}^{2}_ {L^2 (\Omega)}\right) \norm{\nabla  \vartheta ^ {k+1}}^2_ {\BL^2 (\Omega)} + \frac{c_0}{4}\norm{\nabla e^ {k+1}_ {\vartheta,h}}^2 _ {\BL^2 (\Omega)}\\
	\label{Proof-eq 2.49abcd} 
	&\le \widehat C_{\mathcal{L}} C_ {st,\vartheta}\left(1+\widehat C_ {st,\vartheta} \right)  \mathcal{L}_{1,k}  \tau \norm{\nabla  \vartheta ^ {k+1}}^2_ {\BL^2 (\Omega)}+ \frac{c_0}{4}\norm{\nabla e^ {k+1}_ {\vartheta,h}}^2 _ {\BL^2 (\Omega)},
\end{align}
where $\widehat C_{st,\vartheta}$ is the stability constant of $\CJ_{h, \vartheta }$.
		\\
	{\bf Step 3.2:} Here, we consider the last term in the left hand side of \eqref{error-heat.eq}, and set  $z_h= e^ {k+1}_ {\vartheta,h}$ to get
	\begin{align}
	\nonumber
	 [\CA _3 (\vartheta _h ^ k, \delta_\tau ^ k \Bu_h)-\CA _3 (\vartheta ^k, \delta_\tau ^ k \Bu): e^ {k+1}_ {\vartheta,h}]&= [\CA _3 (e^ {k}_ {\vartheta,h}, \delta_\tau ^ k \Bu_h): e^ {k+1}_ {\vartheta,h}]	 \\ & \nonumber \quad+[\CA _3 (\vartheta _h ^ {k}, \delta_\tau ^ k \Be_{\Bu,h}): e^ {k+1}_ {\vartheta,h}]
	 \\ & \nonumber \quad
	  +[\CA _3 (\CJ _{h , \vartheta}  \vartheta ^ {k}- \vartheta ^ {k}, \delta_\tau ^ k \CJ_{h , \Bu}\Bu): e^ {k+1}_ {\vartheta,h}]	 \\ & \nonumber \quad+[\CA _3 \left(  \vartheta ^k, \delta_\tau ^ k \left(\BJ_{h,\Bu} \Bu^k -\Bu^k \right) \right) : e^ {k+1}_ {\vartheta,h}]
	 \\
	 \label{Proof. main theorem-eq-3.29}
	 &{:=H_4+H_5+H_6+H_7.}
	\end{align}
By  the H\"older inequality \cite[Prop. II.2.18]{MR2986590},   the Sobolev embedding  $W^ {1,4} (\Omega ) \subset L^\infty (\Omega)$, Korn’s inequality \cite{horgan1983inequalities}, the  inequality \eqref{Gaglias}, and exploiting  the a priori estimate from Lemma \ref{Lem-a- priori estimate-discrete solution} and applying Young's inequality, we have 
	\begin{align}
	\nonumber
H_4 &\le C_K \rho \norm{e ^ {k+1}_ {\vartheta ,h}} _ {L^2 (\Omega)}\sum_{T \in \T_h}\norm{ \CE (\delta_\tau ^ k \Bu_h)} _ {\BL^4 ( T)} \norm{e ^ {k}_ {\vartheta ,h}} _ {L^2 (\Omega)}\\& \nonumber=  C_K \rho  \norm{e ^ {k+1}_ {\vartheta ,h}}^2 _ {L^2 (\Omega)}\norm{ \CE (\delta_\tau ^ k \Bu_h)} _ {\BL^2 (\Omega)} \\
\label{Proof-eq 2.49abcdefg}
	&
	\le { C_K \rho \sqrt{C^ \prime  _ {st, \CE\Bu}}} \left( \frac{\rho}{\sqrt{ \alpha _ \kappa} } \sqrt{\mathcal{L}_{1,L}}+ \frac{1 }{\sqrt{\alpha _{\kappa}}}
	\sum_{m=0}^{k}\norm{\Bf^m}_{\BL ^2 (\Omega)}\right) \norm{e ^ {k+1}_ {\vartheta ,h}} ^2_ {L^2 (\Omega)},
	\end{align}
	where the second estimate holds true since $\delta _\tau ^ k \Bu _h$ is a piecewise linear function. 
Completely analogous to the previous estimate, 
 the following upper bound holds true  for $H_5$
	\begin{align}
	\nonumber 
H_5 &\le  C_K \norm{\vartheta _h ^ {k}} _ {L^2 (\Omega)}\norm{ \CE (\delta_\tau ^ k \Be_{\Bu, h})} _ {\BL^\infty (\Omega)} \norm{e ^ {k+1}_ {\vartheta ,h}} _ {L^2 (\Omega)}\\
\label{Proof-eq 2.49abcdefgh}
	& \le  2C^2_K C_ {st, \vartheta} \tau ^2\mathcal{L}_{1,k}  \norm{ \CE (\delta_\tau ^ k \Be_{\Bu, h})}^2 _ {\BL^2 (\Omega)}+ \frac{1}{8 \tau } \norm{e ^ {k+1}_ {\vartheta ,h}} ^2_ {L^2 (\Omega)}, 
	\end{align}
{	where the last inequality obtained from the a priori estimate for $\vartheta _h^ k$ in Lemma \ref{Lem-a- priori estimate-discrete solution}.
		One can use  the H\"older inequality \cite[Prop. II.2.18]{MR2986590},  the Sobolev embedding  $W^ {1,4} (\Omega ) \subset L^\infty (\Omega)$, and note  that $\BJ_{h, \Bu} \Bu$ is a piecewise linear function, and apply the inequality \eqref{Gaglias} to get
\begin{align}
\nonumber
H_6 &\le \norm{\CJ _{h , \vartheta}  \vartheta ^ {k}- \vartheta ^ {k}}_{L^2(\Omega)}\sum_{T \in \T_h} \norm{\delta_\tau ^ k \CJ_{h , \Bu}\Bu}_{\BL^4(T)}  \norm{e ^ {k+1}_ {\vartheta ,h}}_ {L^2 (\Omega)}\\
\label{Proof-eq 2.50}
&\le C h^2 \tau \norm{\delta_\tau ^ k \Bu}^2_{\BH^1(\Omega)}\norm{\vartheta^k}^2_{H^1(\Omega)}+ \frac{1}{8 \tau } \norm{e ^ {k+1}_ {\vartheta ,h}} ^2_ {L^2 (\Omega)},
\end{align}
where the last term is a result of  applying the approximation property of $\CJ_{h , \vartheta}$ and Young’s inequality.}
Finally, from the H\"older inequality \cite[Prop. II.2.18]{MR2986590},  the Sobolev embedding $H^1 (\Omega ) \subset L^6 (\Omega) \subset  L^3 (\Omega) $ combined with	the approximation property of $\BJ_{h, \Bu}$ yield
\begin{align}
\nonumber
H_7&\le \norm{\vartheta^k}_{L^3(\Omega)}\norm{\delta_\tau ^ k \nabla \cdot \left(\BJ_{h,\Bu} \Bu^k -\Bu^k \right) }_{\BL^2(\Omega)}\norm{e^{k+1}_ {\vartheta,h}}_{L^6(\Omega)}\\
& \label{Proof-eq 2.50a}
\le C h^2 \norm{\vartheta^k}^2_{H^1(\Omega)}\norm{\delta_\tau^k \Bu}^2_{\BH^2(\Omega)}+\frac{c_0}{8}\norm{e^{k+1}_ {\vartheta,h}}^2_{H^1(\Omega)}.
\end{align} 
{\bf Step 3.3:} We set  $z_h= e^ {k+1}_ {\vartheta,h}$ in \eqref{error-heat.eq}, and   find upper bounds for the terms in the right hand side of this equation.
Using the approximation property of  $\CI_h$, there holds the following estimate
\begin{align}
H_8:=\skp{{\delta _\tau ^k(\vartheta- \CJ_{h, \vartheta } \vartheta)}, \, e ^ {k+1}_ {\vartheta ,h}}\le C h^2 \tau \norm{\delta _\tau ^k\vartheta}^2_{H ^1 (\Omega)}
+\frac{1 }{8 \tau  }\norm{e^ {k+1} _ {\vartheta ,h}}^2 _ {L^2 (\Omega)},
\end{align}
and 
\begin{align}
H_9:=\skp{\CI _h \gamma^ {k+1} - \gamma^ {k+1},e ^ {k+1}_ {\vartheta ,h}}&
\label{Proof-eq 2.63an} 
\le C h^2 \tau \norm{\gamma^ {k+1}}^2_{H ^1 (\Omega)}
+\frac{1 }{8 \tau  }\norm{e^ {k+1} _ {\vartheta ,h}}^2 _ {L^2 (\Omega)}.
\end{align}
{Finally, from the definition of the Ritz operator $\CJ_{h , \vartheta}$,  the Sobolev embedding  $W^ {1,4} (\Omega ) \subset L^\infty (\Omega)$, and  since $ e^ {k+1}_ {\vartheta,h}$  belongs to the space of  linear piecewise continuous functions,    the estimate \eqref{Gaglias}, as well as the approximation property of this operator, we conclude}
\begin{align}
\nonumber
H_{10}&:= \skp{\left( K(\vartheta ^ {k+1})\,\nabla \vartheta ^ {k+1}-K(\CJ_{h, \vartheta}\vartheta ^ {k+1})\,\nabla \CJ_{h,\vartheta}\vartheta ^ {k+1}  \right),\, \nabla e^ {k+1} _ {\vartheta ,h}}\\
\nonumber
&=\skp{ K(\CJ_{h,\vartheta} \vartheta ^ {k+1})\left( \nabla \CJ_{h,\vartheta}\vartheta ^ {k+1} -\nabla \vartheta ^ {k+1}\right)  ,\, \nabla e^ {k+1} _ {\vartheta ,h}}\\
& \nonumber \quad +\skp{\left( K(\vartheta ^ {k+1})-K(\CJ_{h, \vartheta}\vartheta ^ {k+1})  \right)\,\nabla \vartheta ^ {k+1},\, \nabla e^ {k+1} _ {\vartheta ,h}}\\
\label{Proof-eq 2.63a} 
&\le C c_0 ^{-1} h^2 \norm{\vartheta^ {k+1}}^2_{H^1 (\Omega)}+ \frac{c_0}{8}\norm{\nabla e^ {k+1} _ {\vartheta ,h}}_ {\BL^2(\Omega)}^2.
	\end{align}
{\bf Step 4:} \textbf{Collecting everything}.	
  		We use the test functions $\Bv _h=\delta_\tau ^k\Be _ {\Bu,h} $, $w _h=e^ {k+1} _ {\varphi,h} $, and  $z_h= e^ {k+1}_ {\vartheta,h}$ in the variational formulations \eqref{error-elasticity.eq}, \eqref{error-phase-field.eq} and \eqref{error-heat.eq}, respectively.
  	Then, we apply \eqref{upper bound-A1}, \eqref{Proof-eq 2.41}, \eqref{Proof-eq 2.48a}, \eqref{Proof-main-step3-eq1}, \eqref{Proof-eq 2.47}, { \eqref{step.2.4.1} }, \eqref{Proof-eq 2.60ab} and \eqref{Proof. main theorem-eq-3.29} to have the following inequality
		\begin{align}
			\nonumber 
		\frac{1}{2 \tau} \left(\norm {\partial _ \tau ^ {k} \Be _ {\Bu,h}  }^2_ {\BL^2 (\Omega)}\right.&  -\left. \norm {\partial _ \tau ^ {k-1}\Be _ {\Bu,h}}^2_ {\BL^2 (\Omega)}\right)+{\ell  }\norm{\nabla e ^ {k+1} _ {\varphi ,h}}^2 _ {\BL^2 (\Omega)}+\frac{1 }{\ell }\norm{e ^ {k+1}_ {\varphi ,h}}^2 _ {L^2 (\Omega)}
	 \\		 \nonumber
	&+	\frac{1}{\tau} \left(\norm {\Be ^ {k+1}_ {\vartheta,h}  }^2_ {L^2 (\Omega)}\right. -\left. \norm {\Be ^ {k-1}_ {\vartheta,h} }^2_ {L^2 (\Omega)}\right)+	c_0\norm{(\abs{\vartheta _h}^ {\beta/2})\nabla e^ {k+1}_ {\vartheta,h}}^2 _ {\BL^2 (\Omega)}
%
%
		 \\		 \nonumber
		 &
+c_0\norm{\nabla e^ {k+1}_ {\vartheta,h}}^2 _ {\BL^2 (\Omega)}
	+T_1+T_4+\widehat T_1 +\widehat{T}_8
	\\
	\label{main THM-proof-3.31}		
\le  &\sum_{i=2}^{11}\abs{T_i}+\sum_{i=2}^{7}\widehat T_i	+\widehat{T}_9+\sum_{i=3}^{10}H_i.
		\end{align}
After multiplying  both sides of \eqref{main THM-proof-3.31} into $\tau$ and considering 
	the    assumptions of this   theorem and  combining  \eqref{Proof-eq 2.41}, \eqref{Proof-eq 2.43}, \eqref{Proof-eq 2.44}, \eqref{proof-main-step3}--\eqref{Proof-eq 2.63},  \eqref{Proof-eq 2.48}, \eqref{Proof-eq 2.49a}--\eqref{Proof-eq 2.49m}, \eqref{Proof-eq 2.60a}, \eqref{Proof-eq 2.60am}, {\eqref{step.2.4.2}, \eqref{penalty1}}
	 \eqref{Proof-eq 2.49abcd}, \eqref{Proof-eq 2.49abcdefg}--\eqref{Proof-eq 2.63a},   applying the discrete Gronwall's lemma, and  for sufficiently small $h$ and $\tau$,  the following inequality holds true for all $k \le L\le M-1$  
		\begin{align}
		\nonumber
				\norm {\partial _ \tau ^ {L} \Be _ {\Bu,h}  }^2_ {\BL^2 (\Omega)}&+\kappa \tau  \norm{ \CE(\Be^ {L+1} _ {\Bu,h})}^2_ {\BL ^2 (\Omega)}+\norm {\Be ^ {L+1}_ {\vartheta,h}  }^2_ {L^2 (\Omega)}+\tau \norm {\nabla \Be ^ {L+1}_ {\vartheta,h}  }^2_ {\BL^2 (\Omega)} \\
				\nonumber
				& 
				+{\ell \tau }\norm{\nabla e ^ {L+1} _ {\varphi ,h}}^2 _ {\BL^2 (\Omega)} +\frac{\tau }{\ell }\norm{e ^ {L+1}_ {\varphi ,h}}^2 _ {L^2 (\Omega)} \lesssim  \mathcal{L}_{1,L+1}   \tau^2 \norm{\nabla  \vartheta ^ {L+1}}^2_ {\BL^2 (\Omega)}\\
				\nonumber
				&+ \kappa ^ {-1}h^2
		\left( \norm{ \CE (\Bu^{L+1}  )}^2_ {\BH ^1 (\Omega)} \norm{\varphi ^{L+1}} ^2 _ {H^1 (\Omega)}	
	+\norm{\vartheta ^{L}} ^2 _ {H^1 (\Omega)}\right)\\& \nonumber
		 +h^2  \norm{\CE(\Bu^{L+1})}^2_ {\BH^ 1(\Omega)}\norm{\varphi ^{L+1}} ^2 _ {H^1 (\Omega)}+
		 \ell ^ {-1}\tau h^2\left( \norm{ \CE (\Bu^{L+1}  )}^2_ {\BH ^1 (\Omega)} \norm{\varphi ^{L+1}} ^2 _ {H^1 (\Omega)}\right. \\
		 & \nonumber 
		 \left. +\norm{\varphi ^{L+1}} ^2 _ {H^1 (\Omega)}+\norm{ \CE (\Bu^{L+1}  )}^4_ {\BH ^1 (\Omega)} \right) +\ell\tau  h^2  \norm{\CE(\Bu^{L+1})}^4_ {\BL^2(\Omega)}\norm{\varphi ^{L+1}} ^2 _ {H^1 (\Omega)}\\
		 \nonumber
		 &
		 +\tau h^2\left(\norm{\partial_{\tau \tau} ^L \Bu}^2_ {\BH^1(\Omega)}+  \norm{\Bf^ {L+1}}^2_{\BH ^1 (\Omega)}+\norm{\vartheta^{L+1}}^2_ {H^1(\Omega)}\right)\\& 
		  \nonumber
		 +h^2 \tau^2 \left(\norm{\vartheta^{L}}^2_ {H^1(\Omega)} \norm{\delta ^L_{\tau }  \Bu}^2_ {\BH^2(\Omega)}+\norm{\delta ^L_{\tau } \vartheta}^2_{H^1(\Omega)}+\norm{\gamma ^ {L+1}}^2_{H^1(\Omega)}\right).   
		\end{align}
			Then, combining this with the triangle inequality and making use of the approximation properties of $\CJ_{ h,\varphi}$ , $\CJ_{h,\vartheta}$  and $\BJ _{h,\Bu}$  complete the proof.
	\end{proof}
\begin{remark}
In some practical examples, we need to assume   $\kappa=\mathcal{O}(\ell)$ and $h=\mathcal{O}(\ell)$, 
then in the statement of Theorem \ref{Thm. the main result}, we are required to add the assumptions ${\tau }{\ell ^ {-1}}= \mathcal{O}(1)$ and ${\tau }{\kappa ^ {-1}}= \mathcal{O}(1)$.
\end{remark}

\section{Numerical experiment}\label{numerical}
Here, we present a numerical example to illustrate the theoretical results.
We consider an area inside a square with a  length  of $1 \,\mathrm{mm}$ ($\Omega=(0,1)^2 \mathrm{mm}^2$) having a notch with a length of $0.5\;\mathrm{mm}$ (and a thickness of $1\,\mu \mathrm{m}$) on the left side as the domain. The time interval is considered to be  $I=[0,\,0.2].$
The specimen is
fixed at the bottom and we denote traction-free conditions on both sides. 
\\ 
A non-homogeneous Dirichlet condition is applied at the top.
In order to observe the material failure, we impose  a monotonic displacement $\overline{\Bu}=(0, 1\times10^{-5})^T$ at the top side in a
vertical direction (until the full fracture).
We also assume a zero load term  $\boldsymbol{f}=(0,0)^T$, zero initial displacement $\Bu _0=(0,0)^T$ and we  set $\Bv _0 =(0,0)^T$, $\gamma=0$, and {$\vartheta_0=0$}.
For the material parameters, we use a shear modulus of $\mu=13.33\times 10^{9}\,{Pa}$, a Lam{\'e} constant of $\lambda=8.88\times 10^{9}\,{Pa}$.
In this  problem, the stabilization parameter is assumed to be $\kappa = 10^{-8}$. The length scale is assumed  $\ell = \mathcal{O}(h)$, i.e., we set  $\ell = 2{h}$. Moreover,  the  energy release rate is   $G_c=3.0\times 10^{6}\,{Pa}$. For the temporal discretization, we use a time-step of $\tau=1\times 10^{-3}\,\text{s}$. A schematic of the computational domain is given in Figure \ref{schematic}. Regarding the thermal effect, we utilize a constant thermal conductivity $K=0.158$ W\hspace{-0.3mm}/\hspace{-0.3mm}m\,K, a Neumann boundary condition $\bar{\gamma}=\,300$K is imposed to the front of the notch (shown in $z$ in Figure \ref{schematic}), and the thermal expansion is $2\times 10^{-6}$\,$\mathrm{K^{-1}}$.

In order to solve the nonlinear system resulting from  \eqref{variational-elasticity-fully discrete.eq}-\eqref{variational-heat-fully discrete.eq}, we use Newton method's with 
the stopping criterion  $\texttt{Tol}_\texttt{N-R}=10^{-8}$, i.e. the relative
residual norm that is less than $\texttt{Tol}_\texttt{N-R}=10^{-8}$. At each 
Newton iteration, we use
a direct solver to solve the linear systems.
For this example, since there is no exact value for the displacement coordinates (i.e., $\textbf{u}_x,~\textbf{u}_y$), the function $\varphi$, and the temperature $\vartheta$ a reference observation employing 214\,321 elements and 214\,728 nodes is used to compute the error terms. The crack pattern and the heat distribution for this problem at the final  time step, i.e, $M=200$ are shown in Figure \ref{schematic}.
 \begin{figure} 
 	\centering
 	\includegraphics[width=0.9\textwidth]{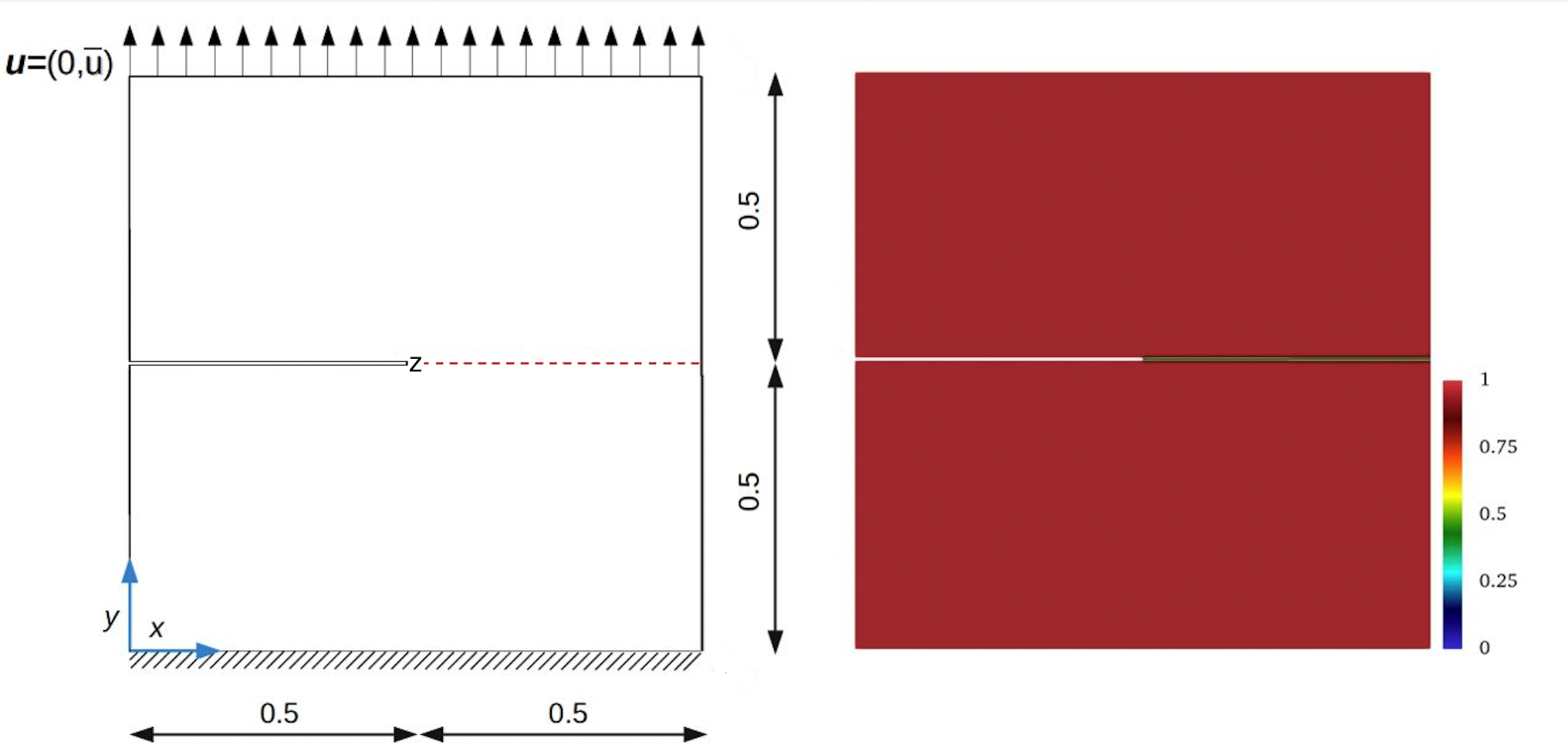}
 	\vspace{-0.25cm}
 	\caption{A Schematic of the single edge notch including its dimensions and boundary conditions (left) and the phase field $\varphi$ at the last time step (full failure) (right).}
 	\label{schematic}
 \end{figure}

 \begin{table}\label{table4}
 	\centering
 	\begin{tabular}{|l|cccccc|}
 		\hline
 		&  \quad\hspace{0.2cm} $||\textbf{u}^M-\textbf{u}^M_h||_{{\BL^2(\Omega)}}$\qquad          &   rate &     ~~  $||\varphi^M-\varphi^M_h||_{{L^2(\Omega)}}$  &     rate    &     ~~  $||\vartheta^M-\vartheta^M_h||_{{L^2(\Omega)}}$  &  ~~\,~~   rate    \\[0.8mm]\hline
 		$h=1/20$     & 0.0014   & --  & 0.2081  & --  & 0.102  & --\\[1mm]
 		$h=1/40$      & 8.10$\times 10^{-4}$   & 0.807  & 0.122  & 0.778 & 0.064  & 0.9075  \\[1mm]
 		$h=1/80$     & 4.25$\times 10^{-4}$   & 0.930  & 0.061  & 0.997  & 0.033  & 0.9556\\[1mm]				
 		$h=1/160$   &  2.07$\times 10^{-4}$  & 1.003  & 0.030& 1.002 & 0.0166  & 0.9911 \\[1mm]
 		$h=1/320$   &  1.03$\times 10^{-4}$   & 0.9958  & 0.015& 0.988 & 0.0082  & 1.0087 \\
 		\hline
 	\end{tabular}\\[2mm]
 	\centering
 	\begin{tabular}{|l|cccccc|}
 		\hline
 		& $|\nabla\left(\textbf{u}^M-\textbf{u}^M_h\right) |_{{\BL^2(\Omega)}}$\quad        &   rate &        $|\nabla\left(\varphi^M-\varphi^M_h\right) |_{{\BL^2(\Omega)}}$  &     rate    &        $|\nabla\left(\vartheta^M-\vartheta^M_h\right) |_{{\BL^2(\Omega)}}$  &     ~~rate~~    \\[0.5mm]\hline
 		$h=1/20$     & 0.0495   & --  & 6.90  & -- & 3.103  & -- \\[1mm]
 		$h=1/40$      &0.0403   & 0.2010  & 6.25  & 0.141& 2.261  & 0.457   \\[1mm]
 		$h=1/80$     & 0.0298   & 0.4355  & 4.61  & 0.442& 1.581  & 0.516  \\[1mm]				
 		$h=1/160$   &  0.0217  & 0.4576  & 3.39& 0.443& 1.110  & 0.509  \\[1mm]
 		$h=1/320$   &  0.0161   & 0.4390  & 2.46& 0.458& 0.785  & 0.498  \\
 		\hline
 	\end{tabular}\\[1.8mm]
 	\caption{The rate of  convergence in $L^2$-norm and $H^1$-semi norm for the discrete solutions of the test problem at the final time step, i.e., $M=200$.\vspace{-8mm}} 
 \end{table}

For the space discretization, we use  first-order quadrilateral finite elements for four integration points. In all time steps, the solutions are computed at the node; however, the derivatives are computed at the integration points. For the $L_2$-norm computations (of the derivatives), we interpolate from the Gauss points to the nodes. For this, at each point, we find the four closest integration points in the neighboring elements, estimate the weights with respect to the distances, and compute the derivative values. Table \ref{table4} shows the convergence of the error for solutions and the derivatives. Both results confirm the theoretical results.

%
%
%


\bibliographystyle{amsalpha}
\bibliography{references}

\providecommand{\bysame}{\leavevmode\hbox to3em{\hrulefill}\thinspace}
\providecommand{\MR}{\relax\ifhmode\unskip\space\fi MR }
\providecommand{\MRhref}[2]{%
  \href{http://www.ams.org/mathscinet-getitem?mr=#1}{#2}
}
\providecommand{\href}[2]{#2}
\begin{thebibliography}{HKRR17}

\bibitem[AT90]{ambrosio1990approximation}
Luigi Ambrosio and Vincenzo~Maria Tortorelli, \emph{{Approximation of
  functional depending on jumps by elliptic functional via
  $\Gamma$-convergence}}, Commun. Pure Appl. Math. \textbf{43} (1990), no.~8,
  999--1036.

\bibitem[BF13]{MR2986590}
Franck Boyer and Pierre Fabrie, \emph{Mathematical tools for the study of the
  incompressible {N}avier-{S}tokes equations and related models}, Applied
  Mathematical Sciences, vol. 183, Springer, New York, 2013. \MR{2986590}

\bibitem[BFM00]{bourdin2000numerical}
Blaise Bourdin, Gilles~A Francfort, and Jean-Jacques Marigo, \emph{Numerical
  experiments in revisited brittle fracture}, J. Mech. Phys. Solids \textbf{48}
  (2000), no.~4, 797--826.

\bibitem[BHL17]{burman2017penalty}
Erik Burman, Peter Hansbo, and Mats~G Larson, \emph{The penalty-free nitsche
  method and nonconforming finite elements for the signorini problem}, SIAM
  Journal on Numerical Analysis \textbf{55} (2017), no.~6, 2523--2539.

\bibitem[Bra98]{Braides1998}
A.~Braides, \emph{Approximation of free-discontinuity problems}, Springer
  Berlin Heidelberg, 1998.

\bibitem[CH16]{MR3576569}
C.~Carstensen and F.~Hellwig, \emph{Low-order discontinuous {P}etrov-{G}alerkin
  finite element methods for linear elasticity}, SIAM J. Numer. Anal.
  \textbf{54} (2016), no.~6, 3388--3410. \MR{3576569}

\bibitem[CKP11]{MR2873244}
Carsten Carstensen, Dongho Kim, and Eun-Jae Park, \emph{A priori and a
  posteriori pseudostress-velocity mixed finite element error analysis for the
  {S}tokes problem}, SIAM J. Numer. Anal. \textbf{49} (2011), no.~6,
  2501--2523. \MR{2873244}

\bibitem[CN00]{MR1742264}
Zhiming Chen and Ricardo~H. Nochetto, \emph{Residual type a posteriori error
  estimates for elliptic obstacle problems}, Numer. Math. \textbf{84} (2000),
  no.~4, 527--548. \MR{1742264}

\bibitem[Dur88]{MR958875}
Ricardo~G. Dur\'{a}n, \emph{Error analysis in {$L^p,\;1\leq p\leq\infty,$} for
  mixed finite element methods for linear and quasi-linear elliptic problems},
  RAIRO Mod\'{e}l. Math. Anal. Num\'{e}r. \textbf{22} (1988), no.~3, 371--387.
  \MR{958875}

\bibitem[EG17]{MR3702417}
Alexandre Ern and Jean-Luc Guermond, \emph{Finite element quasi-interpolation
  and best approximation}, ESAIM Math. Model. Numer. Anal. \textbf{51} (2017),
  no.~4, 1367--1385. \MR{3702417}

\bibitem[HKRR17]{MR3634026}
Christian Heinemann, Christiane Kraus, Elisabetta Rocca, and Riccarda Rossi,
  \emph{A temperature-dependent phase-field model for phase separation and
  damage}, Arch. Ration. Mech. Anal. \textbf{225} (2017), no.~1, 177--247.
  \MR{3634026}

\bibitem[HP83]{horgan1983inequalities}
Cornelius~O Horgan and Lawrence~E Payne, \emph{{On inequalities of Korn,
  Friedrichs and Babu{\v{s}}ka-Aziz}}, Arch. Ration. Mech. Anal. \textbf{82}
  (1983), no.~2, 165--179.

\bibitem[HR15]{MR3449619}
Christian Heinemann and Elisabetta Rocca, \emph{Damage processes in
  thermoviscoelastic materials with damage-dependent thermal expansion
  coefficients}, Math. Methods Appl. Sci. \textbf{38} (2015), no.~18,
  4587--4612. \MR{3449619}

\bibitem[LRTT18]{MR3842151}
Giuliano Lazzaroni, Riccarda Rossi, Marita Thomas, and Rodica Toader,
  \emph{Rate-independent damage in thermo-viscoelastic materials with inertia},
  J. Dynam. Differential Equations \textbf{30} (2018), no.~3, 1311--1364.
  \MR{3842151}

\bibitem[LTW91]{MR1111453}
Yan~Ping Lin, Vidar Thom\'{e}e, and Lars~B. Wahlbin, \emph{Ritz-{V}olterra
  projections to finite-element spaces and applications to integrodifferential
  and related equations}, SIAM J. Numer. Anal. \textbf{28} (1991), no.~4,
  1047--1070. \MR{1111453}

\bibitem[LZ92]{MR1151067}
Yan~Ping Lin and Tie Zhang, \emph{Finite element methods for nonlinear
  {S}obolev equations with nonlinear boundary conditions}, J. Math. Anal. Appl.
  \textbf{165} (1992), no.~1, 180--191. \MR{1151067}

\bibitem[MHSA15]{MR3373458}
C.~Miehe, M.~Hofacker, L.-M. Sch\"{a}nzel, and F.~Aldakheel, \emph{Phase field
  modeling of fracture in multi-physics problems. {P}art {II}. {C}oupled
  brittle-to-ductile failure criteria and crack propagation in
  thermo-elastic--plastic solids}, Comput. Methods Appl. Mech. Engrg.
  \textbf{294} (2015), 486--522. \MR{3373458}

\bibitem[Nir59]{MR109940}
L.~Nirenberg, \emph{On elliptic partial differential equations}, Ann. Scuola
  Norm. Sup. Pisa Cl. Sci. (3) \textbf{13} (1959), 115--162. \MR{109940}

\bibitem[Ros17]{MR3666698}
Riccarda Rossi, \emph{Existence results for a coupled viscoplastic-damage model
  in thermoviscoelasticity}, Discrete Contin. Dyn. Syst. Ser. S \textbf{10}
  (2017), no.~6, 1413--1466. \MR{3666698}

\bibitem[Rou10]{MR2596554ex}
Tom\'{a}\v{s} Roub\'{\i}\v{c}ek, \emph{Thermodynamics of rate-independent
  processes in viscous solids at small strains}, SIAM J. Math. Anal.
  \textbf{42} (2010), no.~1, 256--297. \MR{2596554}

\bibitem[Rou13]{MR3014456}
\bysame, \emph{Nonlinear partial differential equations with applications},
  second ed., International Series of Numerical Mathematics, vol. 153,
  Birkh\"{a}user/Springer Basel AG, Basel, 2013. \MR{3014456}

\bibitem[RR14]{rocca2014degenerating}
Elisabetta Rocca and Riccarda Rossi, \emph{A degenerating pde system for phase
  transitions and damage}, Mathematical Models and Methods in Applied Sciences
  \textbf{24} (2014), no.~07, 1265--1341.

\bibitem[RR15]{MR3365562}
\bysame, \emph{``{E}ntropic'' solutions to a thermodynamically consistent {PDE}
  system for phase transitions and damage}, SIAM J. Math. Anal. \textbf{47}
  (2015), no.~4, 2519--2586. \MR{3365562}

\bibitem[SW00]{MR1770343}
Simon Shaw and J.~R. Whiteman, \emph{Numerical solution of linear quasistatic
  hereditary viscoelasticity problems}, SIAM J. Numer. Anal. \textbf{38}
  (2000), no.~1, 80--97. \MR{1770343}

\end{thebibliography}
\end{document}